\pdfoutput=1
\documentclass[11pt,a4paper]{amsart}
\usepackage[utf8]{inputenc}
\usepackage[T1]{fontenc}
\usepackage[british]{babel}
\usepackage[babel=true,tracking=true]{microtype}
\usepackage[margin=3cm]{geometry}
\usepackage{amsmath,amsthm,amssymb,bookmark,multicol,pdflscape}
\usepackage[backend=biber,style=alphabetic,giveninits]{biblatex}
\DeclareNameAlias{default}{family-given}
\AtBeginBibliography{\small}

\theoremstyle{plain}
\newtheorem{theorem}{Theorem}[section]
\newtheorem{lemma}[theorem]{Lemma}
\newtheorem{corollary}[theorem]{Corollary}
\newtheorem{proposition}[theorem]{Proposition}

\theoremstyle{definition}
\newtheorem{definition}[theorem]{Definition}
\newtheorem{conjecture}[theorem]{Conjecture}
\newtheorem{example}[theorem]{Example}
\newtheorem{fact}[theorem]{Fact}

\theoremstyle{remark}
\newtheorem{remark}[theorem]{Remark}
\newtheorem{notation}[theorem]{Notation}
\newtheorem*{acknowledgements}{Acknowledgements}
\newtheorem*{structure}{Structure of the paper}

\title[On Arithmetic Mirror Symmetry for smooth Fano fourfolds]{On Arithmetic Mirror Symmetry \\ for smooth Fano fourfolds}
\author{Mikhail Ovcharenko}
\address
{
  \textnormal{Steklov Mathematical Institute of RAS, 8 Gubkina street, Moscow 119991, Russia.}
  \newline
  \textnormal{HSE University, Laboratory of Mirror Symmetry, 6 Usacheva str., Moscow 119048, Russia.}
}
\email{michael.a.ovcharenko@gmail.com, ovcharenko@mi-ras.ru}

\DeclareMathOperator{\CH}{CH}
\DeclareMathOperator{\codim}{codim}
\DeclareMathOperator{\coker}{coker}
\DeclareMathOperator{\dist}{dist}
\DeclareMathOperator{\even}{even}
\DeclareMathOperator{\FT}{FT}
\DeclareMathOperator{\Gr}{Gr}
\DeclareMathOperator{\Hg}{Hg}
\DeclareMathOperator{\Hom}{Hom}
\DeclareMathOperator{\IH}{IH}
\DeclareMathOperator{\im}{im}
\DeclareMathOperator{\inv}{inv}

\DeclareMathOperator{\OG}{OG}
\DeclareMathOperator{\lcm}{lcm}
\DeclareMathOperator{\Pic}{Pic}
\DeclareMathOperator{\prim}{prim}
\DeclareMathOperator{\rk}{rk}
\DeclareMathOperator{\rf}{rf}
\DeclareMathOperator{\SL}{SL}
\DeclareMathOperator{\Sing}{Sing}
\DeclareMathOperator{\Spec}{Spec}

\begin{document}

\begin{abstract}
  We introduce an explicit class of tempered Laurent polynomials in the sense of Villegas and Doran--Kerr in \(n \leqslant 4\) variables including all Landau--Ginzburg models for smooth Fano threefolds with very ample anticanonical class. We check that it contains Landau--Ginzburg models for various Fano fourfolds which are complete intersections in smooth toric varieties and Grassmannians of planes, or are quiver flag zero loci. We discuss implications to \emph{Arithmetic Mirror Symmetry conjecture}, a Hodge-theoretic approach to the study of Ap\'{e}ry constants of Fano varieties proposed by Golyshev--Kerr--Sasaki. Using the partial case of Arithmetic Mirror Symmetry conjecture proved by Kerr, we provide three examples of a Mirror Symmetry correspondence between specific algebraic classes.
\end{abstract}

\maketitle

\section{Introduction}

A Fano variety is a complex projective variety with ample anticanonical class. They form a crucial part of the classification of projective varieties: according to Minimal Model Program, any smooth projective variety of negative Kodaira dimension should be birationally equivalent to a Mori fibration into Fano varieties (possibly with terminal singularities). Moreover, up to deformation equivalence there are only finitely many Fano varieties of given dimension with bounded singularities.

The classification of smooth Fano varieties is known only in dimension up to~3. One of possible approaches to the classification of higher--dimensional Fano varieties is to apply methods of Mirror Symmetry. Deformation families of Fano varieties conjecturally should correspond to their \emph{Landau--Ginzburg models}, certain quasi-projective varieties equipped with a proper regular function whose general fibre is a Calabi--Yau variety. Algebro-geometric properties of a Fano variety should reflect symplectic properties of its properly compactified Landau--Ginzburg model, and vice versa.

Landau--Ginzburg models for smooth Fano threefolds \(X\) of Picard rank one with very ample \(-K_X\) could be identified with certain rational modular curves via Shioda--Inose construction (see~\cite{golyshev/classification,przyjalkowski/review}). Moreover, Landau--Ginzburg models for smooth Fano threefolds of higher Picard rank together with their deformations provide explicit uniruled structures on moduli spaces of lattice-polarised K3 surfaces (see~\cite{doran/modularity}). However, modularity fails in higher dimensions: for Fano threefolds of Picard rank one the differential Galois group of these families is isomorphic to \(\SL_2(\mathbb{C})\), while this usually fails for Fano varieties of higher dimension (see~\cite{bogner/CY-type,golyshev/classification,coates/periods-fourfolds}). Nevertheless, for many of them their Landau--Ginzburg models have arithmetic properties that admit an interpretation in terms of automorphic forms (see~\cite{golyshev/fibered,gegelia/paramodular}).

In attempts to understand known examples of Landau--Ginzburg models for Fano varieties, the \emph{extremality} property was coined (see Subsection~\ref{subsection:MS} for details). Namely, for Landau--Ginzburg models of Fano varieties of odd dimension the local system of middle-dimensional cohomology of fibres in known cases has zero ``ramification defect''. In general the ``ramification defect'' is expected to be bounded by dimension of the space of middle-dimensional primitive Hodge classes on a Fano variety.

While this partially explains the behaviour of associated local systems, this says nothing about the construction of Landau--Ginzburg models. Conjecturally, a smooth Fano variety admits a degeneration to a toric variety in such a way that a Landau--Ginzburg model after the restriction to an open chart is identified with a Laurent polynomial whose Newton polytope is isomorphic to the fan polytope of a toric degeneration. The constant term series of such a Laurent polynomial is identified with the regularised quantum period of a Fano variety, and the differential operator of minimal order and degree annihilating it is identified with the Picard--Fuchs operator. These Laurent polynomials are not unique: one can produce new ones via mutations, special birational transformations of tori preserving the constant term series of a Laurent polynomial.

For smooth Fano threefolds with very ample anticanonical class their Landau--Ginzburg models could be constructed from Minkowski polynomials, special Laurent polynomials with reflexive Newton polytope whose faces admit Minkowski decomposition into segments or triangles with no interior lattice points, and whose edge polynomials are of the form \((1 + t)^n\) (the latter property is called the \emph{binomial principle} in~\cite{przyjalkowski/review}). Any two Minkowski polynomials with the same period series could be connected by a sequence of mutations (see~\cite{akhtar/minkowski}). If the anticanonical class of a Fano variety is not very ample, the Newton polytope of the Laurent polynomial is no longer reflexive, so Minkowski approach is not applicable. Nevertheless, the binomial principle holds in more general situations, see~\cite{przyjalkowski/review}.

We do not know what the analogue of Minkowski polynomials should look like for smooth Fano fourfolds. Unexpectedly, some insights into this come from the study of logarithmic Mahler measures and special values of L-functions (see~\cite{villegas/mahler,doran/k-theory}). The reason for this is that Laurent polynomials with similar properties naturally arise in this setting as well. Namely, they also have reflexive Newton polytope, and the family of fibres should admit a fibre-wise compactification \(Y \rightarrow \mathbb{A}^1\) whose general fibre is a Calabi--Yau variety. Moreover, these families are \emph{tempered} (see Section~\ref{section:proof-amenable} or~\cite{doran/k-theory}), i.e., the toric-coordinate symbol \(\{x_1, \ldots, x_n\} \in \CH^{n}(\mathbb{G}_m^n, n)_\mathbb{Q}\) completes to a class in the higher Chow group \(\CH^{n}(Y, n)_\mathbb{Q}\) of the total space. According to~\cite{doran/k-theory}, tempered families of Calabi--Yau varieties have arithmetic properties similar to rational modular curves, and the toric-coordinate symbol in \(\CH^{n}(Y, n)_\mathbb{Q}\) is an analogue of Beilinson's Eisenstein symbols (see~\cite[Sections~7,10]{doran/k-theory}).

Landau--Ginzburg models for Fano varieties with very ample anticanonical class of small dimension are tempered. Actually, for a Laurent polynomial in two variables with reflexive Newton polygon temperedness holds if and only if all edge polynomials vanish only at roots of unity (see~\cite{villegas/mahler}). In particular, Landau--Ginzburg models for smooth del Pezzo surfaces with very ample anticanonical class are tempered because they satisfy the binomial principle (see~\cite{przyjalkowski/review}). In dimension 3 Minkowski polynomials are also tempered (see~\cite[Proposition~2.4]{daSilva/arithmetic}). Moreover, any tempered Laurent polynomial is defined over \(\overline{\mathbb{Q}}\) up to scale (see~\cite[Proposition~4.16]{doran/k-theory}).

Four-dimensional tempered Laurent polynomials provide special families of Calabi--Yau threefolds. It is known that unlike the three-dimensional case, i.e., the case of families of K3 surfaces, tempered families of Calabi--Yau threefolds are never ``modular'' (see~\cite[\nopp 10.6]{doran/k-theory} for a precise statement). So it is natural to ask whether Landau--Ginzburg models continue to be tempered for Fano varieties of higher dimension. In the paper we provide a partial positive answer to this in dimension~\(4\).

Namely, we introduce an explicit class of Laurent polynomials in \(n \leqslant 4\) variables generalising Minkowski polynomials, and show that they are tempered, which is the main result of the paper (see Section~\ref{section:proof-amenable} for definitions and details).

\begin{theorem}[see Proposition~\ref{proposition:amenable}]\label{theorem:amenable}
  Let \(\mathsf{p} \in \mathbb{C}[x_1^{\pm}, \ldots, x_n^{\pm}]\), where \(n \leqslant 4\), be an \emph{amenable} (respectively, \emph{strictly amenable}) Laurent polynomial. For \(n = 4\) we also assume that \(\mathsf{p}\) is defined over a totally real abelian number field. It admits a weak (respectively, smooth) log Calabi-Yau compactification \(\mathsf{f} \colon \mathcal{W} \rightarrow \mathbb{P}^1\), and this family is \emph{tempered}, i.e., the toric-coordinate symbol \(\{x_1, \ldots, x_n\}\) in \(\CH^{n}(\mathbb{G}_m^n, n) \otimes_{\mathbb{Z}} \mathbb{Q}\) completes to a motivic cohomology class in \(H^n_{\mathcal{M}}(\mathcal{W} \setminus \mathcal{W}_0, \mathbb{Q}(n))\), where \(\mathcal{W}_0\) is the fibre of \(\mathsf{f}\) over infinity.
\end{theorem}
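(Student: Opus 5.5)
The plan is to follow the Doran--Kerr strategy for temperedness: build the compactification torically, and kill the obstruction to extending the symbol along each boundary divisor face by face.

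\textbf{Compactification.} Let \(\Delta\) be the reflexive Newton polytope of \(\mathsf{p}\). Take the toric variety \(\mathbb{P}_{\Delta}\) and a (partial) crepant resolution \(\widetilde{\mathbb{P}}_\Delta\) coming from a suitable triangulation of the boundary of the polar polytope. The pencil spanned by \(\overline{\{\mathsf{p}=t\}}\) and the toric boundary \(\mathcal{W}_0\) has base locus. Resolve this base locus by successive blow-ups along the components of \(\{\mathsf{p}_\sigma = 0\} \cap D_\sigma\), where \(\sigma\) runs over faces and \(\mathsf{p}_\sigma\) is the face polynomial. This produces \(\mathsf{f}\colon \mathcal{W}\to\mathbb{P}^1\).

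The amenability hypotheses (face polynomials being Minkowski-decomposable into ``binomial'' pieces such as \((1+t)^k\), simplices without interior points, and so on) are exactly what make these base-locus components unions of toric-type subvarieties. Blowing them up in a controlled order keeps the singularities crepant. It should give a weak log Calabi--Yau pair \((\mathcal{W},\mathcal{W}_0)\) in general, and a smooth one under strict amenability; the latter is the threefold case handled as in the Minkowski-polynomial literature.

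\textbf{Temperedness.} By the localisation sequence, \(\{x_1,\ldots,x_n\}\) extends from the torus to \(\mathcal{W}\setminus\mathcal{W}_0\) iff its residues along every divisor \(E\) of \(\mathcal{W}\setminus(\mathbb{G}_m^n\cup\mathcal{W}_0)\) vanish in \(H^{n-1}_{\mathcal{M}}(E\setminus\ldots,\mathbb{Q}(n-1))\). These divisors lie over the zero loci of face polynomials inside boundary tori. The residue of the coordinate symbol along such a component is itself a toric symbol \(\{y_1,\ldots,y_{n-1}\}\) restricted to \(\{\mathsf{p}_\sigma=0\}\subset\mathbb{G}_m^{n-1}\).

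So I would argue by induction on dimension: a face polynomial of an amenable polynomial is again amenable in fewer variables. One then needs this restricted symbol to be zero rationally, i.e. the face polynomial to be ``tempered'' in the relative sense.

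\begin{itemize}
\item For edges this is the classical fact that the \(K_2\)-type symbol \(\{x,1-x\}\)-like residues vanish when roots are roots of unity, since \(\{\zeta,\cdot\}\) is torsion.
\item For 2-faces whose polynomial is a product of binomial factors \((1+x^ay^b)\) or triangle polynomials, the symbol \(\{x,y\}\) on each component \(1+x^ay^b=0\) reduces, after a monomial change of variables, to \(\{\zeta,u\}\), which is torsion.
\item For components on which the symbol is \(\{\zeta,u\}\) the vanishing is immediate. In the triangle case one uses Steinberg relations \(\{u,1-u\}=0\).
\end{itemize}

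\textbf{Main obstacle.} The main obstacle is \(n=4\), where the residues land in \(H^3_{\mathcal{M}}\) of surfaces, i.e. \(K_3\)-type symbols on face curves and points. Here vanishing is not automatic: one gets elements like \(\{\zeta_1,\zeta_2,u\}\) and symbols in \(K_3\) of number fields, i.e. Bloch-group or dilogarithm classes. Such classes vanish rationally iff the Borel regulator vanishes.

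This is where I would invoke the totally real abelian hypothesis. By Borel's theorem, \(K_3(F)\otimes\mathbb{Q}\) has rank \(r_2(F)=0\) for totally real \(F\), so every such residue at a closed point or constant symbol is torsion. For residues on curve components, I would first use the amenable structure to reduce them, via Steinberg relations and monomial substitutions, to constant symbols defined over cyclotomic or abelian fields. I would then apply Borel's theorem. Abelianness is presumably needed so that the roots of unity appearing in binomial factors lie in a field whose maximal totally real part controls the relevant \(K_3\).

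Carefully tracking which blow-up divisors occur, including exceptional divisors over intersections of face zero loci, is the delicate bookkeeping step. I expect this to require a case analysis over the allowed Minkowski summands of 3-faces.
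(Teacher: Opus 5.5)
\textbf{There is a genuine gap at $n=4$.} Your overall architecture matches the Doran--Kerr strategy the paper follows: toric compactification, reducing temperedness to vanishing of the restricted coordinate symbols on the face loci $D^*_{\delta,l}$, induction on face dimension, and Borel for totally real fields. But the step you yourself flag as the main obstacle is left open, and the Steinberg-calculus mechanism you suggest is not what closes it.

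\textbf{Missing idea: argue globally, not by symbol manipulation.} By induction the restricted symbol on $D^*_{\delta,l}$ has trivial residues along $D_{\delta,l}\cap D_{\delta'}$. So by localisation it lies in the image of $\CH^{S+1}(D_{\delta,l},S+1)_{\mathbb{Q}}$ of the smooth projective closure, and one shows this group vanishes. This is what the width-one Minkowski condition is for: it makes $D_{\delta,l}$ rational over $K$.
\begin{itemize}
\item For curves one uses $\CH^2(\mathbb{P}^1_K,2)_{\mathbb{Q}}=0$. This handles arbitrary width-one summands, not only the binomial and triangle factors your computation covers.
\item For surfaces ($n=4$), blow down to $\mathbb{P}^2$, $\mathbb{P}^1\times\mathbb{P}^1$ or a Hirzebruch surface. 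The projective bundle formula reduces $\CH^3$ of these to copies of $\CH^2(K,3)_{\mathbb{Q}}\cong\mathbb{Q}^{r_2}=0$ for $K$ totally real.
\item The exceptional curves contribute $\CH^2(E,3)_{\mathbb{Q}}=0$ for the same reason. This is exactly where Borel enters.
\end{itemize}
``Reduce via Steinberg relations to constant symbols'' is not an argument. If done in the function field it would not even suffice, since $\CH^3(U,3)_{\mathbb{Q}}$ of an open surface also receives $K_3$-type contributions from curves on $U$ that are invisible in $K_3^M(K(U))$.

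\textbf{Second gap: the localisation criterion.} ``The symbol extends iff its residues along every divisor vanish'' is false for a boundary with several components. The obstruction lives in $\CH^{n-1}(B,n-1)$ of the whole SNC boundary $B=\mathbb{A}^1\times\widetilde{D}$. Componentwise vanishing on $B^1\setminus B^2$ kills only the first differential of the coniveau spectral sequence. For $n=4$ a second residue survives in a subquotient of $\CH^2(B^2\setminus B^3,3)_{\mathbb{Q}}$. The paper kills it because the open double-intersection curves are rational over $K$ (the intersection clause in total rationality), and $\CH^2(W,3)_{\mathbb{Q}}=0$ for open $W\subset\mathbb{A}^1_K$ with $K$ totally real.

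\textbf{Compactification side.}
\begin{itemize}
\item Amenability does not make base-locus components ``toric-type''. What is used is transverse non-degeneracy (SNC base locus) plus the crepant (partial) resolution from a fine regular star triangulation.
\item Strict amenability is the four-dimensional smoothness condition on that triangulation, not ``the threefold case''.
\item In the weak case you must observe that the isolated terminal points avoid the base locus. Hence $\mathcal{W}\setminus\mathcal{W}_0$ is smooth, so motivic cohomology and localisation apply.
\item The ``delicate bookkeeping'' of exceptional divisors is unnecessary. Build the class on the smooth family over $\mathbb{A}^1$ inside the resolved toric variety, whose complement of the torus is exactly $\mathbb{A}^1\times\widetilde{D}$, and pull it back along the base-locus blow-up.
\end{itemize}
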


\begin{proposition}[see~\cite{ovcharenko/supplement}]\label{proposition:examples}
  \begin{enumerate}
  \item Landau--Ginzburg models for Fano fourfold quiver flag zero loci from the list in~\cite{kalashnikov/quiver-laurent} are amenable in 35 of 99 cases, and for 62 of 64 bad cases we construct a mutation-equivalent amenable replacement.
  \item For Fano fourfold complete intersections in smooth toric varieties from the list in~\cite{coates/toric} Landau--Ginzburg models are amenable in 439 of 738 cases, and for 160 of 299 bad cases we construct a mutation-equivalent amenable replacement.
  \item In all cases two-dimensional faces of Newton polytopes admit a lattice Minkowski decomposition into segments and triangles of lattice width 1.
  \end{enumerate}
\end{proposition}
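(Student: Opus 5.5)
This proposition is a computational statement, so the plan is a systematic computer verification against the definition of amenability from Section~\ref{section:proof-amenable}. The input consists of the explicit Laurent polynomials already produced in the literature: the 99 quiver flag zero loci of~\cite{kalashnikov/quiver-laurent} and the 738 toric complete intersections of~\cite{coates/toric}. For each polynomial \(\mathsf{p}\), I will compute its Newton polytope \(\Delta\), check reflexivity, and enumerate all faces of \(\Delta\) of every dimension. For each face I will then test the defining conditions of amenability. These are conditions on the face polynomials \(\mathsf{p}|_\sigma\): for edges, the binomial-type condition that the edge polynomial factors as a product of cyclotomic-type binomials. For higher-dimensional faces, the condition is that the face polynomial is an amenable polynomial in fewer variables, obtained by reduction to the lattice of the face. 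Because the definition is recursive in the dimension of faces, the check is naturally organised bottom-up: edges first, then 2-faces, then facets. Running this gives the counts of amenable cases, namely 35 of 99 and 439 of 738.

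For part~(3), I will run a lattice Minkowski decomposition routine on every two-dimensional face. Concretely, for each 2-face I will enumerate lattice polygons of lattice width 1, namely unit segments and triangles of width 1, and search by recursion over edge directions for a Minkowski decomposition of the face into such summands. Since the edge vectors of the summands must partition the edge vectors of the polygon, this search is finite and small. The output is a certificate listing the decomposition for each face, which can be independently verified by recomputing the Minkowski sum.

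The bad cases are the main obstacle. For those I will search for a mutation-equivalent amenable replacement. The strategy is to perform mutations with respect to primitive weight vectors \(w\) and factors \(F\) chosen so as to repair a non-amenable face. Typically the offending face polynomial is not a product of binomials, and one mutates using a factor that makes the transformed face decomposable. I will implement a breadth-first search over mutations of bounded depth. Each candidate is checked for three things: that it remains a Laurent polynomial, that its constant term series agrees with the original period sequence (which mutation guarantees, but which is checked numerically for the first many coefficients as a sanity test), and that it is amenable. The searches that terminate successfully produce the 62 and 160 replacements; the remaining cases are those where the bounded search fails.

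The difficulty is purely computational: the mutation graph is infinite, so no negative statement is claimed for the remaining cases. All certificates, consisting of the polynomials, the mutation data and the decompositions, are recorded in~\cite{ovcharenko/supplement}, and the proof consists of their verification.
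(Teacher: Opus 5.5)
Your overall plan matches what the paper relies on: a machine check of each Laurent polynomial from \cite{kalashnikov/quiver-laurent} and \cite{coates/toric}, then a bounded mutation search for the bad cases, with certificates in \cite{ovcharenko/supplement}. The gap is in \emph{what} you propose to test. Your criterion is not the paper's definition of amenability, so running it would count a different class, and nothing ties its output to the numbers \(35/99\) and \(439/738\).

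First, the definition is not recursive in the sense you state. Amenability presupposes a \emph{reflexive} Newton polytope, which a face of \(\Delta\) in general does not have (a unit edge has no interior lattice point at all). So ``the face polynomial is an amenable polynomial in fewer variables'' is not a meaningful test. What must be checked for every face \(\delta\) of \(\Delta\) is:
\begin{enumerate}
\item a lattice Minkowski decomposition of \(\delta\) into width-one lattice polytopes;
\item a compatible factorisation \(\mathsf{p}_\delta = x^{\nu}\prod_i \mathsf{h}_{\delta_i}^{n_i}\), with each \(\mathsf{h}_{\delta_i}\) having Newton polytope \(\delta_i\);
\item that edge polynomials are products of cyclotomic polynomials (e.g.\ \(1 + t + t^2\) is allowed), not of binomials.
\end{enumerate}
This applies in particular to the three-dimensional facets. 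Your plan never decomposes them, although they carry the surfaces \(D_{\delta,l}\) of the case \(S = 2\) in the proof of Proposition~\ref{proposition:amenable}.

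Even your two-dimensional routine is incomplete. Matching edge vectors only produces real Minkowski decompositions, whereas a lattice one also requires \(\delta \cap M = \sum_i \delta_{i,\mathbb{Z}}\). For instance, the square with vertices \((1,0),(0,1),(2,1),(1,2)\) is the sum of the unit segments from \((0,0)\) to \((1,1)\) and from \((1,0)\) to \((0,1)\). Yet its interior point \((1,1)\) is not a sum of their lattice points, and the square has lattice width \(2\), so your routine would wrongly accept it.

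Second, you omit the two conditions that are not purely combinatorial and that Theorem~\ref{theorem:amenable} actually uses.
\begin{itemize}
\item \emph{Transverse non-degeneracy} (Definition~\ref{definition:transverse-nondeg}): unimodularity of the relative normal cones \(N_{\tau/\delta}\), and the SNC property of the closure of \(V(\mathsf{p}_\delta)_{\mathrm{red}}\) in the charts \(U_{\delta,\tau}\) along the boundary strata. This is a singularity and transversality computation with the actual coefficients. It is what yields the compactification of Proposition~\ref{proposition:compactification} and the smoothness of the components \(D_{\delta,l}\).
\item \emph{Total rationality} of face polynomials for \(n = 4\) (Definition~\ref{definition:totally-rational}). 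This means complete factorisation over the field of definition, and width one for each irreducible factor. It also requires rationality of intersections of components of the \emph{same} face polynomial, which Remark~\ref{remark:rationality} singles out as needing direct computation. Finally, \(\{F_j = G_j = 0\}\) must be totally reducible over \(\mathbb{R} \cap \overline{\mathbb{Q}}\). These conditions feed the vanishing of \(Res^2\) and the \(S = 2\) step.
\end{itemize}

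A correct verification must implement all of these conditions, both for the original polynomials and for every mutation candidate in your search. As written, your pipeline certifies a weaker and partly ill-defined property.
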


\begin{remark}
  This is a purely computational statement. Here amenable (not strictly amenable) means that we have not checked the existence of a certain nice triangulation of the dual polytope to the Newton polytope (see Subsection~\ref{subsection:toric}). Note that all Laurent polynomials from Proposition~\ref{proposition:examples} are \emph{rigid maximally mutable} (see~\cite{coates/laurent}). According to~\cite{coates/laurent}, this class should describe all Laurent polynomials naturally arising as Landau--Ginzburg models for Fano varieties (see~\cite{kasprzyk/laurent} for further details).

  We expect that all remaining bad cases could also be fixed by appropriate mutations: we have not done this only for computational reasons. Note that amenable Laurent polynomials are not closed under mutations (neither are Minkowski polynomials, see~\cite{coates/laurent}).
\end{remark}

The study of tempered families of Calabi--Yau varieties and their arithmetic properties is largely motivated by the study of \emph{Ap\'{e}ry constants} of a Fano variety
(see~\cite{doran/k-theory,daSilva/arithmetic,kerr/motivic,bloch/gamma,kerr/unipotent,golyshev/apery}). These are asymptotic invariants of its quantum differential equation. In known cases they are period numbers, and could be studied through Mirror Symmetry for Fano varieties. Their study goes all the way back to Ap\'{e}ry's proof of irrationality of \(\zeta(3)\): the famous Ap\'{e}ry recurrence is equivalent to the Picard--Fuchs equation of a Landau--Ginzburg model for a Fano threefold \(V_{12}\) (see Section~\ref{section:apery}). There exists a special subclass of Ap\'{e}ry constants that we call \emph{principal}: they arise from homogeneous solutions of the quantum differential equation.

On the symplectic side the properties of principal Ap\'{e}ry constants are naturally explained by Gamma conjectures of Galkin--Golyshev--Iritani (see~\cite{galkin/gamma,galkin/gamma-mirror}). On the algebraic side Golyshev--Kerr--Sasaki proposed \emph{Arithmetic Mirror Symmetry conjecture} (see~\cite{golyshev/apery}), a general Hodge-theoretic approach to Ap\'{e}ry constants connecting them with limiting behaviour of families of higher Chow cycles on a Landau--Ginzburg model, and applied it to smooth Fano threefolds that are complete intersections in generalised Grassmannians. This approach is heavily based upon the temperedness property. Theorem~\ref{theorem:amenable} and Proposition~\ref{proposition:examples} imply that for a large number of smooth Fano fourfolds their Landau--Ginzburg models are also tempered, so it's natural to expect that the temperedness property holds in general.

Unfortunately, a proper treatment of Ap\'{e}ry constants, Golyshev--Kerr--Sasaki approach, and relevant Hodge-theoretic notions would be too long to be provided here. For this reason we refer the reader to Section~\ref{section:apery} for a short review.

We do not know how to generalise the approach of~\cite{golyshev/apery} to smooth Fano fourfolds yet we can test its predictions for certain Fano fourfold complete intersections in Grassmannians and their Landau--Ginzburg models of BCFKS type (see~\cite{batyrev/grassmannians-MS,batyrev/grassmannians-MS-2}). Following~\cite{przyjalkowski/grassmannian-planes,przyjalkowski/grassmannians}, we treat them as Laurent polynomials. We check that they are strictly amenable, hence tempered, and describe Ap\'{e}ry constants using the proof of \emph{Gamma conjecture~I'} for Grassmannians (see~\cite{galkin/gamma-mirror}).

\begin{notation}
  Let \(t\) be the coordinate on \(\mathbb{P}^1\), where \(t = 0\) corresponds to the fibre of the potential over \(\infty\). We denote by \(\mathcal{H}^3 = \mathcal{H}_f^3 \oplus \mathcal{H}_v^3\) the local system of middle-dimensional cohomology of fibres, where \(\mathcal{H}_f^3\) is the fixed part, and \(\mathcal{H}_v^3\) has no global sections. For any \(p \in \mathbb{P}^1\) we denote by \(T_p\) the monodromy operator around \(p\).

  If \(L = \sum_{i,j} \beta_{ij} t^i D^j\), \(D = t \frac{\partial}{\partial t}\), is a Picard--Fuchs operator, we refer to maximal index \(j\) or \(i\) such that \(\beta_{ij} \neq 0\) for some other \(i\) or \(j\) as its \emph{order} and \emph{degree}, respectively.
\end{notation}

\begin{theorem}\label{theorem:apery}
  The following assertions hold.
  \begin{enumerate}
  \item Let \(V\) be a smooth complete intersection of multidegree \(\eta\) in \(\Gr(2, N)\), where
    \begin{multicols}{3}
      \begin{enumerate}
      \item[(A.1)] \(\eta = (1, 3)\), \(N = 5\);
      \item[(A.2)] \(\eta = (1, 1)\), \(N = 5\); \\
      \item[(B.1)] \(\eta = (2, 2)\), \(N = 5\);
      \item[(B.2)] \(\eta = (1, 2)\), \(N = 5\); \\
      \item[(C.1)] \(\eta = (1^{(3)}, 2)\), \(N = 6\);
      \item[(C.2)] \(\eta = (1^{(4)})\), \(N = 6\);
      \item[(D)] \(\eta = (1^{(6)})\), \(N = 7\).
      \end{enumerate}
    \end{multicols}
    A Landau--Ginzburg model for \(V\) of BCFKS type from~\cite{przyjalkowski/grassmannian-planes,przyjalkowski/grassmannians} is amenable up to some mutation. Moreover, it is strictly amenable for \((A.1)\)--\((C.1)\) and \((D)\).
  \item For \((A)\)--\((C)\), the degree of the Picard--Fuchs operator in \(t\) equals \(2 i_V\), where \(i_V\) is the Fano index. There exists a unique principal Ap\'{e}ry constant \(\zeta(2)\), and there are no non-principal Ap\'{e}ry constants. We have \(\IH^1(\mathbb{P}^1 \setminus \{\infty\}, \mathcal{H}_v^3) \simeq \mathbb{Q}(-2)\) if \(i_V = 1\).
  \item For \((D)\) there exists a principal Ap\'{e}ry constant \(\zeta(2)\) and a non-principal Ap\'{e}ry constant \(\zeta(4)\). The monodromy at \(t = 0,\infty\) is maximal unipotent. The degree of Picard--Fuchs operator is \(5\), and \(\rk(\IH^1(\mathbb{P}^1 \setminus \{\infty\}, \mathcal{H}_v^3)) = 2\). Apart from \(0\) and \(\infty\), there are \(3\) singular points \(\sigma\): for them we have \(\rk(T_{\sigma} - I) = 1\).
  \end{enumerate}
\end{theorem}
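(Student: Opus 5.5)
The proof splits into the combinatorial assertion (1) and the Hodge-theoretic assertions (2)--(3). I would handle all seven cases uniformly.

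For (1), I would start from the explicit BCFKS-type Laurent polynomials of~\cite{przyjalkowski/grassmannian-planes,przyjalkowski/grassmannians}. These are obtained from the Plücker torus chart of \(\Gr(2,N)\) by the standard change of variables, which turns the hyperplane and quadric sections into factors of the form \((1+\ldots)^k\). For each case I would compute the Newton polytope \(\Delta\) and check that it is reflexive. I would then verify the face conditions in the definition of amenability from Section~\ref{section:proof-amenable}: two-dimensional faces should admit lattice Minkowski decompositions into unit segments and width-one triangles, and the face polynomials should be products of the corresponding binomial/trinomial factors. This is a finite check, done by computer as in Proposition~\ref{proposition:examples}.

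Where the chosen chart fails a face condition (this happens for (C.2)), I would apply an explicit mutation and recheck. For strict amenability in (A.1)--(C.1) and (D), I would additionally exhibit a unimodular, regular triangulation of the dual polytope \(\Delta^\vee\) compatible with the Minkowski decompositions. Theorem~\ref{theorem:amenable} then supplies the smooth log Calabi--Yau compactification \(\mathsf{f}\colon\mathcal{W}\to\mathbb{P}^1\) and temperedness. Because these polynomials have rational coefficients, the field hypothesis in dimension \(4\) is automatic.

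For (2)--(3), I would compute the Picard--Fuchs operator as the minimal operator annihilating the constant term series. By mirror symmetry this series equals the regularised quantum period of \(V\), which is computable via the quantum Lefschetz/abelian–nonabelian correspondence for \(\Gr(2,N)\). I would read off order and degree: degree \(2i_V\) in cases (A)–(C) and degree \(5\) in case (D). The local exponents at each singular point then give the monodromy data: maximal unipotence at \(0\) and \(\infty\), and \(\rk(T_\sigma-I)=1\) at the three conifold-type points in (D). The rank of \(\IH^1(\mathbb{P}^1\setminus\{\infty\},\mathcal{H}_v^3)\) would follow from an Euler characteristic count, namely \(-\chi\) of the local system corrected by the invariants at the punctures, i.e. the ramification defect.

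For the Apéry constants, I would use the Frobenius basis at \(t=0\). Via Gamma conjecture~I' for Grassmannians~\cite{galkin/gamma-mirror}, the principal constant is the ratio of the Gamma-class pairings, which yields \(\zeta(2)\). For the non-principal constant in (D), I would use Kerr's proven case of Arithmetic Mirror Symmetry applied to the tempered symbol from (1). The class in \(\IH^1\) being of type \(\mathbb{Q}(-2)\) (respectively, of rank \(2\) with an extra Tate piece) forces the extension to be governed by \(\zeta(4)\). The Hodge type \(\mathbb{Q}(-2)\) itself I would obtain from the limiting mixed Hodge structure at \(t=0\) together with the Hodge numbers of the Calabi–Yau threefold fibres.

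The main obstacle is this last identification: showing that the non-principal constant is exactly \(\zeta(4)\), and that there are none in (A)–(C). This requires controlling the motivic nature of \(\IH^1\) beyond the local-exponent computations. I would first try to realise \(\IH^1\) geometrically as an algebraic class on \(\mathcal{W}\) and compare it with a primitive Hodge class on \(V\).
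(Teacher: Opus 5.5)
Your part (1) and your reading of the local monodromy data match the paper. The paper also relies on computer verification: Appendix~\ref{appendix:laurent-polynomials} for amenability and the triangulations, and Appendix~\ref{appendix:local-systems} for the Picard--Fuchs operators, followed by Theorem~\ref{theorem:amenable}.

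The genuine gap is in the Ap\'ery constants. You cannot get the non-principal \(\zeta(4)\) in (D) from Kerr's theorem. \cite[Theorem~10.10]{kerr/unipotent} requires the Picard--Fuchs operator to have degree \(2\) in \(t\), and here the degree is \(5\). Moreover \(R = 2\) in the notation of Remark~\ref{remark:calculations}, so the canonical lifting of the normal function is unavailable and the relevant inclusion into \(\IH^1\) may not split. Nothing about \(\IH^1\) ``forces'' \(\zeta(4)\).

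The missing idea is functoriality under quantum Lefschetz. Gamma conjecture~I' is known for \(\Gr(2,N)\), not for \(V\). By \cite[\nopp 9.7]{galkin/gamma-mirror}, the Grassmannian has a principal constant \(\zeta(2)\) coming from the primitive class \(p_2\). For \(N = 7\) it also has \(\zeta(4)\), coming from \(p_4\).
\begin{itemize}
\item Since \(c_1(E)^{\dim \Gr - 3} \cdot p_2 = 0\) but \(\codim V < \dim \Gr - 3\), the class \(p_2\) restricts nontrivially to \(V\). It therefore gives a principal \(\zeta(2)\) in every case.
\item On \(\Gr(2,7)\) we have \(c_1(E)^3 \cdot p_4 = 0\), and \(V\) is cut out by six hyperplanes. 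Hence \(p_4|_V = 0\), and \(\zeta(4)\) survives only as a non-principal constant of \(V\).
\end{itemize}
The transfer itself goes through the Laplace transform of J-functions \cite[Lemma~8.1]{galkin/gamma-mirror} and \cite[Theorem~F.1]{coates/periods}. Your principal-constant step silently needs this same transfer.

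What you call the main obstacle is the uniqueness of \(\zeta(2)\) and the absence of non-principal constants in (A)--(C). This is a dimension count, not a motivic statement. Degree \(2 i_V\) in \(t\) means degree \(2\) in \(s = t^{i_V}\). So the regularised quantum recurrence has a \(2\)-dimensional solution space (Remark~\ref{remark:recurrence}), spanned by \((A_n)\) and the solution carrying \(\zeta(2)\). Realising \(\IH^1\) by an algebraic cycle on \(\mathcal{W}\) is the content of Corollary~\ref{corollary:apery}, not of this theorem.

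Your Euler--Poincar\'e count also needs correcting. The ramification defect computes \(\rk \IH^1(\mathbb{P}^1, \mathcal{H}^3_v)\), whereas
\(\rk \IH^1(\mathbb{P}^1 \setminus \{\infty\}, \mathcal{H}^3_v) = \sum_{\sigma \neq \infty} \rk(T_\sigma - I) - 4 = 1 + \dim \coker(T_\infty - I)\).
\begin{itemize}
\item In the index-one cases \(T_\infty - I\) is invertible, so the rank is \(1\).
\item In (D) \(T_\infty\) is maximally unipotent, so the rank is \(2\). Identifying the rank with the ramification defect, as you do, would give \(1\) for (D).
\end{itemize}
Finally, the isomorphism with \(\mathbb{Q}(-2)\) does not come from the limiting mixed Hodge structure at \(t = 0\). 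It comes from two facts:
\begin{itemize}
\item \(\IH^1(\mathbb{P}^1, \mathcal{H}^3_v)\) is pure of weight \(4\) and of rank one;
\item in the exact sequence relating it to \(\IH^1(\mathbb{P}^1 \setminus \{\infty\}, \mathcal{H}^3_v)\), the term \((\psi_\infty \mathcal{H}^3_v)_{T_\infty}(-1)\) vanishes when \(i_V = 1\).
\end{itemize}
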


\begin{remark}
  We consider Ap\'{e}ry constants only up to rational multiples.
\end{remark}

Theorem~\ref{theorem:apery} states that Landau--Ginzburg models for these Fano fourfolds admit a smooth log-Calabi--Yau compactification and are tempered, which agrees with Golyshev--Kerr--Sasaki approach. Let us discuss its predictions on Ap\'{e}ry constants.

Recall that local systems of middle-dimensional cohomology of fibres on Landau--Ginzburg models of Fano threefolds are extremal. In even dimensions this may fail, and our examples are of ``ramification defect one'': we have \(\IH^1(\mathbb{P}^1, \mathcal{H}_v^3) \simeq \mathbb{Q}(-2)\). The approach of~\cite{golyshev/apery} predicts the existence of a class in \(\CH^2(\mathcal{W} \setminus \mathcal{W}_{\infty}, 0)_{\mathbb{Q}}\) homologous to \(0\) on fibres whose normal function produces the principal Ap\'{e}ry constant \(\zeta(2)\). Actually, at least for the cases \((A.1)\)--\((C.1)\) this could be derived from Kerr's proof of a partial case of Arithmetic Mirror Symmetry conjecture (see~\cite[Theorem~10.10]{kerr/unipotent}).

\begin{corollary}\label{corollary:apery}
  Let \(V\) be a smooth Fano fourfold from Theorem~\ref{theorem:apery} for \((A.1)\)--\((C.1)\), and \(\mathsf{f} \colon \mathcal{W} \rightarrow \mathbb{P}^1\) be its log-Calabi--Yau compactified Landau--Ginzburg model. Then there exists an algebraic cycle in \(\CH^2(\mathcal{W})_{\mathbb{Q}}\) homologous to \(0\) on fibres producing a truncated normal function which is a holomorphic function at \(t = 0\) of the form
\[
  f = \sum_{n = 0}^{\infty} (- B_n + f(0) A_n) t^n; \quad
  f(0) = \lim_{n \to \infty} \frac{B_n}{A_n} = \zeta(2); \quad
  A_0 = 1, \; A_n \in \mathbb{Z}_{> 0}, \; B_0 = 0,
\]
where \((A_n)\) and \((B_n)\) are rational solutions of the regularised quantum recurrence.
\end{corollary}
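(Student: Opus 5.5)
The plan is to derive Corollary~\ref{corollary:apery} from Theorem~\ref{theorem:apery} by checking the hypotheses of Kerr's result \cite[Theorem~10.10]{kerr/unipotent}. That theorem proves a partial case of Arithmetic Mirror Symmetry. It applies to a smooth log-Calabi--Yau compactified family \(\mathsf{f}\colon \mathcal{W}\to\mathbb{P}^1\) that is tempered, has maximal unipotent monodromy at \(t=0\), and has a one-dimensional \(\IH^1(\mathbb{P}^1\setminus\{\infty\},\mathcal{H}_v^3)\) of Hodge--Tate type \(\mathbb{Q}(-2)\). Under these hypotheses it produces a cycle in \(\CH^2(\mathcal{W})_{\mathbb{Q}}\), homologous to zero on fibres, whose truncated normal function is holomorphic at \(t=0\). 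Its constant term is the corresponding Ap\'{e}ry limit.

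\emph{Step 1.} By Theorem~\ref{theorem:apery}(1), in cases (A.1)--(C.1) the BCFKS Landau--Ginzburg model is strictly amenable. Theorem~\ref{theorem:amenable} then gives a smooth log Calabi--Yau compactification \(\mathcal{W}\) on which the symbol \(\{x_1,\dots,x_4\}\) extends, so the family is tempered. For \(n=4\) that theorem needs the field-of-definition hypothesis. This holds because the BCFKS polynomials have integer coefficients.

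\emph{Step 2.} Verify the Hodge-theoretic hypotheses. The condition \(\IH^1(\mathbb{P}^1\setminus\{\infty\},\mathcal{H}_v^3)\simeq\mathbb{Q}(-2)\) is Theorem~\ref{theorem:apery}(2) when \(i_V=1\), which covers (A.1), (B.1) and (C.1). For the remaining index-two cases, (A.2), (B.2) and (C.2), only (A.2) and (B.2) fall in the range (A.1)--(C.1). For those I would reduce to the index-one situation by the standard base change \(t\mapsto t^{i_V}\). Under this change the Picard--Fuchs operator of degree \(2i_V\) becomes one of degree \(2\) in \(t^{i_V}\), and I would recompute \(\IH^1\) for that pulled-back family by the same Euler-characteristic count used in Theorem~\ref{theorem:apery}(2). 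The local monodromy at \(t=0\) is maximal unipotent because the constant term series \(\sum A_n t^n\), with \(A_0=1\) and \(A_n\in\mathbb{Z}_{>0}\), is the unique holomorphic solution there.

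\emph{Step 3.} Identify the normal function concretely. Kerr's theorem expresses the truncated normal function as \(-\sum B_n t^n + f(0)\sum A_n t^n\), where \(\sum B_n t^n\) is the second (Frobenius) solution of the inhomogeneous recurrence. Because \(\sum A_n t^n\) has radius of convergence \(1/|\sigma_{\min}|\) while \(f\) extends holomorphically past that singularity, we get \(f(0)=\lim B_n/A_n\). By Theorem~\ref{theorem:apery}(2) this limit is the unique principal Ap\'{e}ry constant \(\zeta(2)\), up to a rational multiple. That theorem obtains it via Gamma conjecture I' for Grassmannians.

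The main obstacle is matching Kerr's setup precisely. His theorem is formulated for a specific class of families, and one must check that the cycle he constructs, an extension class coming from \(\IH^1\simeq\mathbb{Q}(-2)\) and realised algebraically through temperedness, lands in \(\CH^2(\mathcal{W})\) rather than only in a higher Chow group of an open subset. One must also check that the index-two cases satisfy his rank condition. I would first try to verify that the extension splits off a Tate class in \(H^4(\mathcal{W})\) that is algebraic by Lefschetz-type arguments on the toric compactification.
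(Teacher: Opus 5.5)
Your Steps~1 and~3 follow the paper. The paper also fixes the lift via \cite[Remark~5.2]{golyshev/apery}: it takes the solution with no monodromy around $t_{\min}$, so that $f$ converges up to $|t_{\max}|$ and $f(0)=\lim B_n/A_n$.

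The genuine gap is the cycle itself, which is the actual content of the corollary. \cite[Theorem~10.10]{kerr/unipotent} does not produce an element of $\CH^2(\mathcal{W})_{\mathbb{Q}}$. It only yields an admissible classical normal function on $U=\mathbb{P}^1\setminus\Sigma$, non-singular away from $t=\infty$; the paper says explicitly that Kerr proves the conjecture ``minus the construction of a cycle''. So your opening description of that theorem overstates it, and the step you defer as the ``main obstacle'' is precisely what is missing. Temperedness does not supply the cycle, since it is only a hypothesis for invoking Kerr. Nor do Lefschetz-type arguments alone control Hodge classes in the middle cohomology $H^4$ of a fourfold.

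The paper closes the gap Hodge-theoretically, in four steps:
\begin{enumerate}
\item In the index-one cases $\coker(T_\infty-I)=0$. Hence $\IH^1(\mathbb{P}^1\setminus\{\infty\},\mathcal{H}^3_v)=\IH^1(\mathbb{P}^1,\mathcal{H}^3_v)\simeq\mathbb{Q}(-2)$ is pure, and there is no extension to split.
\item The Decomposition Theorem realises $\IH^1(\mathbb{P}^1,\mathcal{H}^3_v)(2)$ as a $\mathbb{Q}$-Hodge substructure of $H^4(\mathcal{W},\mathbb{Q}(2))$.
\item Since $\mathcal{H}^3_v$ has no global sections, \cite[Corollary~2.21, Proposition~3.3]{brosnan/singularities} identify $\mathrm{NF}^{\mathrm{ad}}(U,\mathcal{H}^3_v(2))_{\mathbb{Q}}$ with $\Hom_{\mathrm{HS}}(\mathbb{Q}(0),\IH^1(\mathbb{P}^1,\mathcal{H}^3_v(2)))$. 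So Kerr's normal function corresponds to a Hodge class in $H^4(\mathcal{W},\mathbb{Q}(2))$.
\item $\mathcal{W}$ is an iterated blow-up of a smooth toric variety in boundary strata, so the Hodge conjecture holds for $\mathcal{W}$. This class is therefore represented by an algebraic $2$-cycle, homologous to zero on fibres.
\end{enumerate}

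Separately, your extension to (A.2) and (B.2) is neither needed nor workable. In the paper, ``(A.1)--(C.1)'' denotes the index-one cases (A.1), (B.1), (C.1). These are exactly the cases with Picard--Fuchs degree $2$ in $t$, with $\coker(T_\infty-I)=0$, and with the smooth triangulations of Appendix~\ref{appendix:laurent-polynomials}. Your proposed reduction fails for the other two:
\begin{itemize}
\item (A.2) has index $3$, not $2$.
\item By Appendix~\ref{appendix:local-systems}, $T_\infty=I$ for (A.2) and $\rk(T_\infty-I)=2$ for (B.2). So $\IH^1(\mathbb{P}^1\setminus\{\infty\},\mathcal{H}^3_v)$ has rank $5$, resp.\ $3$, rather than being $\mathbb{Q}(-2)$.
\item The Picard--Fuchs degree in $t$ is $6$, resp.\ $4$, so Kerr's degree-two theorem does not apply.
\item The substitution $t\mapsto t^{i_V}$ relates Picard--Fuchs operators, not the compactified families $\mathcal{W}$.
\end{itemize}
Finally, maximal unipotent monodromy at $t=0$ does not follow from uniqueness of the holomorphic solution. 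It is read off from the Jordan forms in Appendix~\ref{appendix:local-systems}.
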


Note that Corollary~\ref{corollary:apery} could be considered as a Mirror Symmetry correspondence \emph{at the level of specific algebraic cycles}. Recall that for \((A.1)\)--\((C.1)\) there exists an Ap\'{e}ry constant \(\zeta(2)\) which is \emph{principal}, i.e., the restriction of the unique primitive cohomology class of codimension 2 on \(\Gr(2, N)\) to \(V\) produces a holomorphic function
\[
  \widehat{f} = \sum_{n = 0}^{\infty} (- B_n + \widehat{f}(0) A_n) \frac{t^n}{n!}; \quad
  \widehat{f}(0) = \lim_{n \to \infty} \frac{B_n}{A_n} = \zeta(2); \quad
  A_0 = 1, \; A_n \in \mathbb{Z}_{> 0}, \; B_0 = 0,
\]
where \((A_n), (B_n)\) are defined by Givental's J-function (see~\cite{galkin/gamma,galkin/gamma-mirror} for details). Note that \(\widehat{f}\) is a homogeneous solution of the quantum differential equation, and \(f\) is an inhomogeneous solution of the regularised quantum differential equation.

It is interesting to compare this with the case of Fano threefolds, where Mirror Symmetry correspondence ``on the level of algebraic cycles'' also makes sense. Recall that for a smooth Fano threefold \(X\) its general anticanonical section is a smooth K3 surface endowed with a natural lattice polarisation by restriction of the Picard lattice. Meanwhile, a general fibre of a Landau--Ginzburg model for \(X\) is a smooth K3 surface endowed with polarisation by the lattice \(L\) of monodromy invariants. Note that by Deligne's theorem on invariant cycles these are precisely the image of restriction of \(H^2(\mathcal{W}, \mathbb{Z})\) to a general fibre. According to~\cite{doran/modularity}, these lattice polarisations are \emph{Dolgachev--Nikulin dual}, i.e., there exists a lattice isometry \(\Pic(X) \oplus H \simeq L^{\perp}\) inside the second integral cohomology lattice of a K3 surface.

In our situation we have a unique primitive Hodge class in middle cohomology of a Fano fourfold, it vanishes on a general anticanonical section. Corollary~\ref{corollary:apery} shows that it is mirror-dual to a class in \(\CH^2(\mathcal{W})_{\mathbb{Q}} \simeq H^4(\mathcal{W}, \mathbb{Q}(2))\) homologous to \(0\) on fibres. In other words, this situation is a complete opposite to the case of Fano threefolds. Note also that known results from Arithmetic Mirror Symmetry primarily deal with the Picard rank one case and do not consider deformations of Landau--Ginzburg models. In contrast, Landau--Ginzburg models for Fano varieties of higher Picard rank may have exotic properties in comparison with their general deformations (see~\cite{doran/modularity,lee/rationality}).

As we have \(\rk(\IH^1(\mathbb{P}^1 \setminus \{\infty\}, \mathcal{H}_v^3)) = 2\), the approach of~\cite{golyshev/apery} also predicts that there should exist a morphism of mixed Hodge structures \(\mathbb{Q}(-a) \hookrightarrow \IH^1(\mathbb{P}^1 \setminus \{\infty\}, \mathcal{H}_v^3)\) defining a normal function that produces the non-principal Ap\'{e}ry constant \(\zeta(4)\). Yet the situation is more complicated because this inclusion is potentially non-splittable. Another problem is that for Fano threefolds in~\cite{golyshev/apery} we have \(d = 2  i_V\) singular points (apart from \(t = 0,\infty\)), where \(d\) is degree of the Picard--Fuchs operator in \(t\). In these points we have \(\rk(T_{\sigma} - I) = 1\). Our examples \((A)\)--\((C)\) show this behaviour: there are \(d\) singular points with \(\rk(T_{\sigma} - I) = 1\). For \((D)\) we have \(d = 5\), but there are three singular points, though still with \(\rk(T_{\sigma} - I) = 1\). In other words, this violates the assumption of~\cite[Remark~5.2]{golyshev/apery} ensuring the canonical lifting of a normal function. These two problems are connected: see Remark~\ref{remark:calculations}.

\begin{structure}
  In Section~\ref{section:apery} we review Ap\'{e}ry constants and briefly explain the approach of~\cite{golyshev/apery}. In Section~\ref{section:preliminaries} we provide preliminaries on Mirror Symmetry and higher Chow cycles. In Section~\ref{section:proof-amenable} and~\ref{section:proof-apery} we prove Theorems~\ref{theorem:amenable},~\ref{theorem:apery} and Corollary~\ref{corollary:apery}. In Appendices~\ref{appendix:local-systems},~\ref{appendix:laurent-polynomials} we provide main calculations, the rest could be found at~\cite{ovcharenko/supplement}.
\end{structure}

\begin{acknowledgements}
  The article was prepared within the framework of the project ``International academic cooperation'' HSE University. The author is grateful to C.~Doran, V.~Golyshev, A.~Harder, M.~Kerr, and V.~Przyjalkowski for valuable discussions, careful reading of the paper and many important corrections.
\end{acknowledgements}

\section{Ap\'{e}ry constants of Fano varieties}\label{section:apery}

In 1979 Roger Ap\'{e}ry proved his famous result on irrationality of the Ap\'{e}ry constant~\(\zeta(3)\) by presenting it as a limit of rational numbers of the form
\[
  \lim_{n \to \infty} B_n \cdot A_n^{-1} = \zeta(3),
\]
where \((A_n)\) and \((B_n)\) are solutions to the Ap\'{e}ry recurrence
\begin{equation}
  n^3 u_n - (34 n^3 - 51 n^2 + 27 n - 5) u_{n - 1} + (n - 1)^3 u_{n - 2} = 0
\end{equation}
with initial conditions \(A_0 = 1, A_1 = 5\) and \(B_0 = 0, B_1 = 6\), correspondingly. Irrationality is implied by the following remarkable properties (see~\cite[\nopp 1]{zagier/arithmetic}):
\begin{itemize}
\item[(a)] \(A_n \in \mathbb{Z}\) for all \(n \in \mathbb{Z}_{\geqslant 0}\);
\item[(b)] \(d_n^3 B_n \in \mathbb{Z}\) for all \(n \in \mathbb{Z}_{\geqslant 0}\), where \(d_n = \lcm(1, 2, \ldots, n)\).
\end{itemize}
In a similar way, the small Ap\'{e}ry constant \(\zeta(2)\) can be presented as the limit
\[
  \lim_{n \to \infty} \widetilde{B}_n \cdot \widetilde{A}_n^{-1} = \zeta(2),
\]
where \((\widetilde{A}_n)\) and \((\widetilde{B}_n)\) are solutions to the small Ap\'{e}ry recurrence
\begin{equation}
  n^2 u_n - (11 n^2 - 11 n + 3) u_{n - 1} - (n - 1)^2 u_{n - 2} = 0
\end{equation}
with the initial conditions \(\widetilde{A}_0 = 1, \widetilde{A}_1 = 3\) and \(\widetilde{B}_0 = 0, \widetilde{B}_1 = 5\), correspondingly.

The solutions \((A_n)\) and \((\widetilde{A}_n)\) could be explicitly presented in the following form:
\[
  A_n = \sum_{k = 0}^n \binom{n}{k}^2 \binom{n + k}{k}^2, \quad
  \widetilde{A}_n = \sum_{k = 0}^n \binom{n}{k}^2 \binom{n + k}{k}.
\]
Note that the existence of ``Ap\'{e}ry-like'' rational linear recurrences admitting integral solutions is extremely rare (see~\cite{zagier/arithmetic}). So this should be considered as a manifestation of some deep geometric or arithmetic phenomenon. More precisely,
\begin{enumerate}
\item Beukers has derived the properties (a), (b) as a consequence of his formula
  \[
    B_n - A_n \zeta(3) = \int_0^1 \int_0^1 \int_0^1
    \left ( \frac{x (1 - x) y (1 - y) z (1 - z)}{1 - z + x y z} \right )^n
    \frac{dx \wedge dy \wedge dz}{1 - z + x y z}.
  \]
\item Beukers' formula was later reinterpreted by Beukers and Peters in terms of periods of a certain one-dimensional family of K3 surfaces. Put
  \[
    A(t) = \sum_{n = 0}^{\infty} A_n t^n, \quad
    B(t) = \sum_{n = 0}^{\infty} B_n t^n.
  \]
  The Ap\'{e}ry recurrence is equivalent to \(\mathcal{L}(A(t)) = 0\), where \(\mathcal{L}\) is the differential operator of minimal order and degree annihilating the series \(A(t)\):
  \[
    \mathcal{L} = D^3 - t (34 D^3 + 51 D^2 + 27 D + 5) + t^2 (D + 1)^3, \quad
    D = t \frac{d}{dt}.
  \]
  Moreover, we also have the identity \((D - 1) \mathcal{L} B(t) = 0\). Beukers and Peters showed that the operator \(\mathcal{L}\) can be interpreted as the Picard--Fuchs differential operator associated with the family of surfaces
  \[
    V_t \colon \frac{x (1 - x) y (1 - y) z (1 - z)} {1 - z + x y z} = \frac{1}{t}, \quad t \in \mathbb{C},
  \]
  where \(V_t\) is birationally equivalent to a K3 surface with Picard number 19 for a general choice of \(t\) (compare this with the Beukers' formula).
\item After a birational change of variables \(x = (x + z - 1) / yz\) the family \(V_t\) is given by the equation \(F(x, y, z) = 1/t\), where \(F(x, y, z)\) is the Laurent polynomial
  \[
    F(x, y, z) = (y - 1)(z - 1)(x + z - 1) (y z - x - z + 1) \cdot (x y z)^{-1}.
  \]
  In these terms the series \(A(t)\) coincides with the series \(\sum c_n t^n\), where \(c_n\) is the constant term of the Laurent polynomial \(F(x, y, z)^n\).
\item At last, the series \(A(t)\) can be interpreted as the regularised quantum period of the smooth Fano threefold \(V_{12}\), i.e., of a linear section of \(\OG(5, 10)\) of codimension 7 (see~\cite{golyshev/tate,golyshev/satake}). It is a non-negative integer sequence defined in terms of the small quantum cohomology ring of a Fano variety.
\end{enumerate}

These approaches are interconnected. Namely, the Beukers--Peters pencil of K3 surfaces from~(2) up to a birational transformation arises from fibres of the Laurent polynomial from~(3). Moreover, this Laurent polynomial is a \emph{Landau--Ginzburg model} for the smooth Fano variety \(V_{12}\): for the moment, this means that the power series from~(3) and~(4) coincide. Equivalently, the Picard--Fuchs equation from~(2) coincides with the \emph{regularised quantum differential equation} of the Fano threefold \(V_{12}\) (see~\cite{golyshev/classification,golyshev/DN-equations}).

This is a form of Mirror Symmetry for Fano varieties. Recall that Mirror Symmetry associates to a Fano variety its Landau--Ginzburg model, which is a one-dimensional family of Calabi--Yau varieties such that a general fibre of the family is mirror-dual to a general anticanonical section of a Fano variety (see Subsection~\ref{subsection:MS} for details). There are many different but interrelated forms of Mirror Symmetry for Fano varieties. For the moment, its most important aspect is that the Picard--Fuchs equation for a Landau--Ginzburg model should coincide with the regularised quantum differential equation of a Fano variety (see Section~\ref{section:preliminaries} or~\cite{przyjalkowski/review,galkin/gamma-mirror,kasprzyk/laurent} for more details).

The Picard--Fuchs equation \(\mathcal{L}(\sum c_i s^i) = 0\), where \(s = t^{i_X}\), and \(i_X\) is the Fano index, yields the \emph{regularised quantum recurrence} on the coefficients \(c_i\) (see Subsection~\ref{subsection:MS} for details). Let us consider the \(\mathbb{Q}\)-linear space of its solutions. Note that not every solution of the recurrence defines a solution of the differential equation: there could exist ``almost solutions'' \(\Phi(s)\) such that \(\mathcal{L}(\Phi(s))\) is not zero, but a polynomial \(\Psi(s)\) of a small degree. Consequently, they are either solutions of the usual Picard--Fuchs equation, or they solve inhomogeneous Picard--Fuchs equations \(\mathcal{L}(\cdot) = \Psi(s)\).

It is well-known that the homogeneous Picard--Fuchs equation governs periods of families of algebraic varieties. In turn, inhomogeneous Picard--Fuchs equations govern the behaviour of families of classical or higher algebraic cycles (see~\cite{delAngel/equations}). Note that (higher) algebraic cycles always yield only \emph{inhomogeneous} non-zero solutions of the Picard--Fuchs equation (see~\cite{delAngel/equations} or~\cite[\nopp 1.7]{kerr/exponential}). Meanwhile, the space of solutions to the non-regularised quantum differential equation could be identified with the space of ``coprimitive classes'' on a Fano variety, i.e., with \(\ker(\cap c_1(X))\) inside \(H_{\bullet}(X) := H_{\even}(X, \mathbb{C})\) (see Subsection~\ref{subsection:MS} for details). Their formal inverse Laplace transforms form a subspace of solutions to the regularised quantum recurrence.

Let us choose a basis in the \(\mathbb{Q}\)-solution space of the regularised quantum recurrence. There exists a canonical integer solution \((A_n)\) with \(A_0 = 1\), the regularised quantum period (see Subsection~\ref{subsection:MS}). Moreover, Mirror Symmetry predicts that the monodromy around \(t = 0\) is maximally unipotent (see~\cite{cheltsov/review}). If a smooth Fano variety \(V\) is \emph{quantum minimal} (see~\cite{galkin/minimal,przyjalkowski/LG-models}), i.e., the order of the Picard--Fuchs operator equals \(r = \dim(V)\), then its indicial equation is of the form \(T^r = 0\), and by Frobenius method there are no other power series solutions apart from \((A_n)\) (see~\cite{vanStraten/CY-operators}). Let \((B_n^{(i)})\) be other solutions of the recurrence completing \((A_n)\) to a basis.

\begin{definition}
  The \emph{Ap\'{e}ry constants} of a Fano variety are real numbers of the form
  \[
    \alpha_i = \lim_{n \to \infty} B_n^{(i)} \cdot A_n^{-1}.
  \]
  They are \emph{principal} if the formal Laplace transform of the solution of the recurrence solves the quantum differential equation, and are \emph{non-principal} otherwise.
\end{definition}

It is not obvious that Ap\'{e}ry constants exist. Symplectic geometry answers this for principal Ap\'{e}ry constants: they exist assuming \emph{Conjecture~\(\mathcal{O}\)} (see~\cite[Theorem~3.7.1]{galkin/gamma}). A remarkable property is their functoriality: Ap\'{e}ry constants of an ambient Fano variety arise as Ap\'{e}ry constants for Fano complete intersections therein. This is a consequence of quantum Lefschetz principle (see~\cite[Section~8]{galkin/gamma-mirror}). Note that principal Ap\'{e}ry constants for one Fano variety become non-principal for another (see~\cite{golyshev/tate,golyshev/apery}) if the corresponding primitive class (under Poincar\'{e} and hard Lefschetz dualities) vanishes under restriction. It is not clear that \(\alpha_i\) are period numbers: principal Ap\'{e}ry constants are \(\mathbb{Q}\)-linear polynomials in zeta values assuming \emph{Gamma conjecture~I'} (see~\cite{galkin/gamma}).
This is known to be true for Grassmannians (see~\cite[Theorem~9.20]{galkin/gamma-mirror}). Moreover, Gamma conjecture I holds for del Pezzo surfaces (see~\cite{hu/gamma}) and Fano threefolds of Picard rank one (see~\cite{golyshev/prime}). Not all Ap\'{e}ry constants are of this form: a smooth Fano threefold~\(V_{18}\) has an Ap\'{e}ry constant \(L(\chi_3, 3)\) (see~\cite{golyshev/tate}). Note that in general both Conjecture~\(\mathcal{O}\) and Gamma conjecture~I may fail as stated and should be properly modified (see~\cite{galkin/gamma-modification}).

Golyshev--Kerr--Sasaki proposed in~\cite{golyshev/apery} (see also~\cite{kerr/motivic,kerr/unipotent,bloch/gamma}) \emph{Arithmetic Mirror Symmetry conjecture}, a general Hodge-theoretic approach to Ap\'{e}ry constants. It aims to present them as special values of \emph{truncated (higher) normal functions} (see~\cite{kerr/exponential,delAngel/equations}) of families of (higher) cycles on Landau--Ginzburg models; in particular, as period numbers. These functions are \(\mathbb{C}\)-linear solutions of the regularised quantum recurrence, hence they are \(\mathbb{C}\)-linear combinations of the solution basis. Very informally, the goal is to show that the series like
\[
  F(s) = A(s) \zeta(3) - B(s) = \sum_{i = 0}^{\infty} c_i s^i, \quad
  F(0) = A(0) \zeta(3) - B(0) = \zeta(3),
\]
from the section opening ``are of geometric origin'': they arise as Taylor series for certain holomorphic functions such that \(f(0) = \lim_{n \rightarrow \infty} (B_n / A_n)\), and they are associated with families of classical or higher algebraic cycles on Landau--Ginzburg models.

\begin{conjecture}[\cite{golyshev/apery}]\label{conjecture:arithmetic-MS}
  For each smooth Fano \(n\)-fold \(X\) admitting a toric degeneration its Ap\'{e}ry numbers arise as limits at \(t = \infty\) of classical or higher truncated normal functions produced by cycles on a 1-parameter family of Calabi--Yau \((n-1)\)-folds defined over \(\overline{\mathbb{Q}}\), together with extension classes in the monodromy-invariant part of a limiting mixed Hodge structure of the family.
\end{conjecture}

Despite the terminology a \emph{normal function} is not a function but a section of a fibration into (higher) intermediate Jacobians (see~\cite{kerr/exponential} for a review of (higher) normal functions). A normal function could be considered as a highly multi-valued holomorphic section of a vector bundle associated with the corresponding local system. Choosing a branch and pairing it with a family of differential forms, one obtains a \emph{truncated normal function}, which solves an inhomogeneous Picard--Fuchs equation (see~\cite{delAngel/equations}). Under certain assumptions a truncated normal function may be chosen canonically (see~\cite[Remark~5.2]{golyshev/apery}, and~\cite{delAngel/elevator}).

Conjecture~\ref{conjecture:arithmetic-MS} as stated is rather vague: it is more a guiding principle than a strict statement. Under restrictive assumptions it could be made precise (see~\cite[Sections~5--6]{golyshev/apery}). This version was proved in~\cite{golyshev/apery} for smooth Fano threefolds \(V_{2N}\) for \(N = 5, \ldots, 9\). Moreover, the techniques developed in~\cite{doran/k-theory,kerr/motivic,kerr/unipotent,golyshev/apery} are partially available for Landau--Ginzburg models for higher-dimensional Fano varieties provided that they have a smooth log Calabi--Yau compactification \(\mathcal{W} \rightarrow \mathbb{P}^1\) (see Subsection~\ref{subsection:MS}) and are tempered (see Section~\ref{section:proof-amenable}), which is a kind of K-theoretic unobstructedness. For such families the main result of~\cite{golyshev/apery} states that morphisms of mixed Hodge structures of the form
\(
  \mathbb{Q}(-a) \hookrightarrow \IH^1(\mathbb{P}^1 \setminus \{\infty\}, \mathcal{H}_v^{n - 1})
\)
induce \emph{admissible} higher normal functions which are \emph{non-singular} away from \(t = \infty\) (see~\cite{kerr/exponential} for relevant terminology), where \(\mathcal{H}_v^{n - 1}\) is the ``variable'' part of the local system. The expectation of~\cite{golyshev/apery} is that all Ap\'{e}ry constants should arise as limits at \(t = \infty\) of their associated truncated (higher) normal functions.
Under stated assumptions Conjecture~\ref{conjecture:arithmetic-MS} is proved (minus the construction of a cycle) in the case when the Picard--Fuchs operator has degree \(2\) in \(t\) (see~\cite[Theorem~10.10]{kerr/unipotent}).

Moreover, according to Beilinson--Hodge conjecture (see~\cite[Conjecture~4.8]{golyshev/apery}), these admissible higher normal functions are induced by families of higher algebraic cycles on the open subset \(\mathcal{W} \setminus \mathcal{W}_{\infty}\) produced by Chow classes in \(\CH^a(\mathcal{W} \setminus \mathcal{W}_{\infty}, 2 a - n)_{\mathbb{Q}}\). Yet it is not clear whether Golyshev--Kerr--Sasaki approach produces non-trivial (higher) normal functions. Some methods for checking this are provided in~\cite[Section~5]{golyshev/apery}, though they are based on constructing explicit solutions to Beilinson--Hodge conjecture. Another problem is existence of a canonical lifting of a normal function to a holomorphic function (see~\cite[Remark~5.2]{golyshev/apery} for a partial solution).

\section{Preliminaries}\label{section:preliminaries}

\subsection{Mirror Symmetry for Fano varieties}\label{subsection:MS}

In this subsection we very briefly review Mirror Symmetry for Fano varieties and properties of Landau--Ginzburg models. We refer the reader to cited literature for a more comprehensive review.

\begin{definition}
  Let \(X\) be a smooth Fano variety, and \(H^{\bullet}(X) = \oplus_{i = 0}^{\dim(X)} H^{2i}(X, \mathbb{C})\) be its cohomology ring (we consider only even degrees). We denote by \(\Lambda = \mathbb{Z}[H_2(X, \mathbb{Z})]\) the \emph{Novikov ring} of \(X\), i.e., the group algebra of the abelian group \(H_2(X, \mathbb{Z})\). The \emph{small quantum cohomology ring} of the Fano variety \(X\) is defined as
  \[
    QH(X) = H^{\bullet}(X) \otimes_{\mathbb{Z}} \Lambda, \quad a \star b = \sum_{\beta \in H_2(X, \mathbb{Z})} (a \star b)_{\beta} q^{\beta},
  \]
  where for all \(C \in H_{\bullet}(X)\) we have \(((a \star b)_{\beta}, C) = \langle A, B, C \rangle_{\beta}\) for Poincar\'{e} duals \(A, B  \in H_{\bullet}(X)\), and \(\langle A, B, C \rangle_{\beta}\) is a three-point genus zero Gromov-Witten invariant of \(X\) (see~\cite{guest/quantum}).
\end{definition}

It is well-known that \(QH(X)\) is an associative algebra. Moreover, by the \emph{dimension axiom} we have \(2 c_1(X) \cdot \beta + 2 \dim(X) = \deg(a) + \deg(b) + \deg(c)\), hence the Fano assumption implies that the sum above is finite. Choose a basis \(\beta_1, \ldots, \beta_r \in H_2(X, \mathbb{Z})\). Then for any tuple \(u = (u_1, \ldots, u_r) \in \mathbb{C}^r\) the substitution \((q^{\beta_1}, \ldots q^{\beta_r}) = u\) gives a deformation \(\star_{u}\) of the multiplication in the cohomology ring \(H^{\bullet}(X)\).

Let us consider the trivial vector bundle \(H^{\bullet}(X) \times H^2(X, \mathbb{C}) \rightarrow H^2(X, \mathbb{C})\) over \(H^2(X, \mathbb{C})\). It is naturally endowed with \emph{Dubrovin connection} \(\nabla\) as follows:
\[
  \nabla_{D_i}(\psi) = D_i(\psi) - \beta^i \star \psi, \quad i = 1, \ldots, r, \quad
  D_i = q_i \frac{\partial}{\partial q_i}, \quad \sum q_i \beta^i \in H^2(X, \mathbb{C}),
\]
for any map \(\psi \colon H^2(X, \mathbb{C}) \rightarrow H^{\bullet}(X)\). The associativity of the quantum product implies that this connection is flat, hence \(\{\nabla_{D_i}\}\) act on the space \(M = \{\psi \colon H^2(X, \mathbb{C}) \rightarrow H^{\bullet}(X)\}\) as differential operators on \(H^2(X, \mathbb{C})\). So the vector space \(M\) can be naturally considered as an algebraic \(\mathcal{D}\)-module \(\mathcal{M}_X\) over the base \(H^2(X, \mathbb{C})\).

\begin{definition}
  \begin{enumerate}
  \item The \(\mathcal{D}\)-module \(\mathcal{M}_X\) is the \emph{(small) quantum \(\mathcal{D}\)-module} of \(X\).
  \item The \emph{anticanonical quantum \(\mathcal{D}\)-module} is its restriction to \(\mathbb{C} \langle -K_X \rangle \hookrightarrow H^2(X, \mathbb{C})\) with respect to the parameter \(s = t^{\langle \beta, -K_X\rangle}\), whose solutions have the form
    \[
      \psi \colon \mathbb{C} \langle -K_X \rangle \rightarrow H^{\bullet}(X), \quad
      D(\psi) = -K_X \star_t \psi, \quad
      D = t \frac{\partial}{\partial t},
    \]
    where \(\star_t\) is obtained from \(\star\) by the substitution \(q^{\beta} = t^{\langle\beta, -K_X\rangle}\). Note that its solutions could be identified with the space of initial conditions \(\psi(0)\) for this matrix linear differential equation, i.e., with \(\{\alpha \in H^{\bullet}(X) : K_X \cup \alpha = 0\}\). Moreover, the canonical holomorphic solution \(J(t)\) with \(J(0) = [pt]\) could be explicitly defined via Givental's J-function (see~\cite[Subsection~5.4]{guest/quantum}).
  \item The \emph{quantum period} of \(X\) is its Poincar\'{e} pairing \(\widehat{A}_X = \langle [pt], J(t) \rangle \) with the class \([pt] \in H_0(X)\), and the \emph{anticanonical quantum differential operator} \(\widehat{L}_X \in \mathcal{D}_{\mathbb{A}^1}\) is the operator of minimal order and degree annihilating it. The anticanonical quantum \(\mathcal{D}\)-module is isomorphic to \(\mathcal{D}_{\mathbb{A}^1} / \widehat{L}_X \mathcal{D}_{\mathbb{A}^1}\) because the operator \(\widehat{L}_X\) could be identified with minimal polynomial of the quantum multiplication operator \(-K_X \star_t \bullet\) over \(\mathbb{Q}[t]\) (see~\cite[Chapter~4]{guest/quantum}).

    The order of \(\widehat{L}_X\) equals the rank over \(\mathbb{Q}[t]\) of the subring \(\langle t, K_X \rangle \subset QH(X)\), and solutions of \(\widehat{L}_X\) are of the form \(\langle [pt], \psi \rangle\), where \(\psi\) is a solution of the anticanonical quantum \(\mathcal{D}\)-module. Equivalently, they are of the form \(\langle \beta, J(t) \rangle\), where \(\beta \in H_{\bullet}(X)\) is such that \(K_X \cap \beta = 0\) (see~\cite[Subsection~5.3]{guest/quantum}).
  \end{enumerate}
\end{definition}

By an abuse of notation, we do not distinguish between the anticanonical quantum \(\mathcal{D}\)-module and its cyclic part. It is known that the anticanonical quantum \(\mathcal{D}\)-module is smooth away from the points \(t = 0\) and \(t = \infty\). It has a regular singularity at \(t = 0\), and an irregular singularity at \(t = \infty\) (see~\cite[Chapter~V]{sabbah/isomonodromic} or~\cite{golyshev/classification}).
There is an operation of \emph{regularisation} producing a \(\mathcal{D}\)-module over \(\mathbb{P}^1\) with regular singularities.

\begin{fact}
  \begin{enumerate}
  \item Let \(L = \sum p_i(t) \partial_t^i\) be a differential operator over \(\mathbb{A}^1\). Its \emph{Fourier transform} is the differential operator \(\FT(L) = \sum p_i(\partial_t) (-t)^i\). We have \(\FT^2(L) = [-1]^*(L)\), where \([-1] \colon \mathbb{A}^1 \rightarrow \mathbb{A}^1\) maps \(t \mapsto -t\).
  \item Put \(\inv \colon \mathbb{G}_m \rightarrow \mathbb{G}_m\), \(t \mapsto t^{-1}\), and note that \(D\) is an operator on \(\mathbb{G}_m\). Then the operator \(L_X = \inv^*(\FT(t^{-1} \widehat{L}_X)) t\) is regular over \(\mathbb{P}^1\) and is called the \emph{regularised quantum differential operator} (see~\cite[Chapter~V]{sabbah/isomonodromic} or~\cite{golyshev/classification}).
  \item The operator \(L_X\) is the operator of minimal order and degree annihilating the \emph{regularised quantum period} \(A_X = \sum i! a_i t^i\), where we put \(\widehat{A}_X = \sum a_i t^i\). Under mild assumptions we have \(A_X \in \mathbb{Z}_{\geqslant 0} [[t]]\) (see~\cite{mandel/periods,johnston/quantum}).
  \end{enumerate}
\end{fact}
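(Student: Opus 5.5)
The three assertions are of different natures, so I will treat them separately: (1) is formal algebra in the Weyl algebra, (3) is a bookkeeping computation on power series, and (2) is the analytic statement that will need an external input.

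For (1), I will regard \(\FT\) as the map on the Weyl algebra \(\mathbb{C}[t]\langle \partial_t \rangle\) determined by \(t \mapsto \partial_t\), \(\partial_t \mapsto -t\), applied to operators written in the normal form \(\sum p_i(t)\partial_t^i\). First I check that this assignment respects the defining relation: \([\partial_t, t] = 1\) is sent to \([-t, \partial_t] = 1\). Hence it extends to a ring automorphism, and the normal-form recipe in the statement computes it. Applying it twice gives \(t \mapsto -t\), \(\partial_t \mapsto -\partial_t\), which is exactly \([-1]^*\). This gives \(\FT^2(L) = [-1]^*(L)\).

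For (3), I will work in the variable \(D = t\partial_t\) and write \(\widehat{L}_X = \sum_k t^k P_k(D)\). Put \(B = \sum b_j t^j\) with \(b_j = j!\,a_j\). The operator \(D\) acts diagonally on both \(\widehat{A}_X\) and \(A_X\). Multiplication by \(t\) on \(\widehat{A}_X\) shifts coefficients \(a_j \mapsto a_{j-1}\). On the regularised side this becomes \(b_j \mapsto j\,b_{j-1}\), i.e.\ the operator \(t(D+1) = Dt\). So the substitution \(D \mapsto D\), \(t \mapsto Dt\) carries \(\widehat{L}_X\) to an operator \(\widetilde{L}\) with \(\widetilde{L}(A_X) = 0\). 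Next I verify that \(\widetilde{L}\) coincides, up to a unit, with \(\inv^*(\FT(t^{-1}\widehat{L}_X))\,t\):
\begin{itemize}
\item \(\FT\) sends \(D = t\partial_t\) to \(\partial_t(-t) = -(D+1)\);
\item \(\inv^*\) sends \(D \mapsto -D\) and \(t \mapsto t^{-1}\);
\item the factors \(t^{-1}\) and \(t\) then absorb the shifts \(D \mapsto D \pm 1\).
\end{itemize}
This is a routine monomial check.

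Minimality is transported by the same correspondence. The substitution is an isomorphism between the subalgebras generated by \(t\) and \(D\) (after localising at \(D+k\) where needed), and it preserves both \(D\)-order and \(t\)-degree. So an annihilator of \(A_X\) of smaller order or degree would pull back to one of \(\widehat{A}_X\), contradicting the minimality of \(\widehat{L}_X\). The integrality \(A_X \in \mathbb{Z}_{\geqslant 0}[[t]]\) I will simply quote from~\cite{mandel/periods,johnston/quantum}.

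The main obstacle is (2), the regularity of \(L_X\) on all of \(\mathbb{P}^1\). The plan is to use the structure of the anticanonical quantum \(\mathcal{D}\)-module. It is given by the first-order system \(D\psi = -K_X\star_t\psi\), which has a regular singularity at \(t=0\). At \(t=\infty\) it has an irregular singularity of Poincaré rank one, with slopes at most \(1\) and no other singular points. This follows from the dimension axiom, which forces \(q^\beta\) to appear with weight \(t^{\langle\beta,-K_X\rangle}\), so that after the grading rescaling the connection matrix is linear in the \(\partial\)-direction. By the standard Fourier--Laplace theory (\cite[Chapter~V]{sabbah/isomonodromic}), the Laplace transform of a module that is regular at \(0\) and of slope \(\leqslant 1\) at \(\infty\) has only regular singularities. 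Composing with \(\inv\) preserves regularity. Concretely, I would first try to verify Fuchs' criterion directly: in the \(t\)-expansion of \(L_X\), the coefficient of the top power of \(D\) should be nonzero in both the lowest and the highest \(t\)-degree terms. Failing that, I fall back on the cited slope argument.
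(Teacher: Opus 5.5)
Your part (1) is correct, and the slope argument in (2) is the right mechanism: Poincaré rank one at \(\infty\) via the grading gauge, followed by Fourier--Laplace stationary phase. The paper gives no proof of this Fact and simply defers to the cited literature.

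The genuine gap is in (3). Your substitution \(t\mapsto Dt=t(D+1)\), \(D\mapsto D\) does produce an annihilator
\[
\widetilde L=\sum_k t^k(D+1)(D+2)\cdots(D+k)\,P_k(D)
\]
of \(A_X\). However, \(\widetilde L\) is \emph{not} a unit multiple of \(\inv^*(\FT(t^{-1}\widehat L_X))\,t\). If you actually carry out the monomial check, the left factor \(t^{-1}\) does not merely shift \(D\). Indeed \(\FT(t^{-1})=\partial_t^{-1}=D^{-1}t\), and \(\inv^*\) turns this into the \emph{left} factor \(-D^{-1}t^{-1}\). With the Laplace convention \(t\mapsto-\partial_t\), \(\partial_t\mapsto t\), the formula gives exactly \(L_X=\sum_k t^k(D+1)\cdots(D+k-1)P_k(D)\), where the \(k=0\) term is \(D^{-1}P_0(D)\). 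Hence \(\widetilde L=D\cdot L_X\), so \(\widetilde L\) is always reducible and never minimal.

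A concrete example is \(X=\mathbb{P}^2\). Here \(\widehat L_X=D^3-27t^3\) and \(\widetilde L=D\,\bigl(D^2-27t^3(D+1)(D+2)\bigr)\). The minimal annihilator of \(A_X=\sum_n\frac{(3n)!}{(n!)^3}t^{3n}\) is the order-two right factor. Your minimality argument would therefore certify a non-minimal operator. Its premise is false: \(t\mapsto Dt\) raises the \(D\)-order of the \(t^k\)-term by \(k\). Conversely, pulling back a lower-order annihilator of \(A_X\) produces denominators \((D+1)\cdots(D+k)\), so no contradiction with the minimality of \(\widehat L_X\) arises.

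To repair (3) you need three ingredients that are absent from your proposal.
\begin{itemize}
\item First, the factorisation \(\widetilde L=D\,L_X\) above.
\item Second, an argument that \(L_XA_X=0\), not merely \(D(L_XA_X)=0\). The latter only shows that \(L_XA_X\) is the constant \(Q(0)\), where \(P_0(D)=D\,Q(D)\). This constant vanishes because the indicial polynomial of \(\widehat L_X\) at \(t=0\) is a multiple of \(D^r\) with \(r\geqslant 2\): the residue of \(D\psi=-K_X\star_t\psi\) at \(t=0\) is the nilpotent operator \(c_1(X)\cup\).
\item Third, a separate minimality argument, for instance at the level of the Fourier transform of the cyclic \(\mathcal{D}\)-module, where \(t\)-degree and \(\partial_t\)-order are exchanged. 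A claim that the substitution preserves order cannot do this job.
\end{itemize}
Note also the sign convention. With the Fourier transform exactly as stated (\(t\mapsto\partial_t\), \(\partial_t\mapsto -t\)), the formula yields the annihilator of \(A_X(-t)\). For \(\mathbb{P}^2\) it gives \(-\bigl(D^2+27t^3(D+1)(D+2)\bigr)\). So an honest check also forces you to fix the sign convention, or to substitute \(t\mapsto -t\).
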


\begin{definition}
  Let \(X\) be a Fano variety of Fano index \(i_X\), and let \(L_X = \sum_{i,j} \beta_{ij} t^{i \cdot i_X} D_t^j\) be its regularised quantum differential operator. We can also present it in the form \(L_X = \sum_{i,j} \beta_{ij} i_X^j s^i D_s^j\), where \(s = t^{i_X}\). The \emph{regularised quantum recurrence} is
  \[
    \sum_{i,j} \beta_{ij} (i_X \cdot (n - i))^j u_{n - i} = 0 \quad \forall n \geqslant \deg_s(L_X) =
    \max \left ( \{ i \; \vert \; \beta_{ij} \neq 0 \text{ for some } j \} \right ) .
  \]
\end{definition}

\begin{remark}\label{remark:recurrence}
  Dimension of the space of solutions to the recurrence is \(\deg_s(L_X)\). These solutions define only inhomogeneous solutions \(G(t)\) of the differential equation in the sense that \(L_X(G)\) is a polynomial in \(s = t^{i_X}\) of degree less than \(\deg_s(L_X)\).
\end{remark}

So a Fano variety \(X\) naturally defines a \(\mathcal{D}\)-module over \(\mathbb{A}^1\) with regular singularities. A general expectation of Mirror Symmetry is that this \(\mathcal{D}\)-module is geometric: there should exist a family of Calabi--Yau varieties of dimension \(\dim(X) - 1\) over \(\mathbb{A}^1\) such that its general fibre is mirror-dual to an anticanonical section of a Fano variety, and our \(\mathcal{D}\)-module should underlie the corresponding variation of Hodge structures on middle cohomology.

In practice these families (loosely referred to as \emph{Landau--Ginzburg models}) arise as fibre-wise compactifications of families of fibres of certain Laurent polynomials in \(\dim(X)\) variables (see~\cite{przyjalkowski/review}). The Picard--Fuchs operator could be described as the differential operator of minimal order and degree annihilating the constant coefficient series \(\sum [\mathsf{p}^i]_0 t^i\) of a Laurent polynomial (see~\cite[Theorem~3.4]{przyjalkowski/review}). We refer the reader to~\cite{kasprzyk/laurent} for a review of this approach to Landau--Ginzburg models.

\begin{definition}
  Let \(\mathsf{p} \in \mathbb{C}[x_1^{\pm}, \ldots, x_n^{\pm}]\) be a Laurent polynomial, where \(n = \dim(X)\). We refer to \(\mathsf{p}\) as a \emph{weak Landau--Ginzburg model} if its constant coefficient series \(\sum [\mathsf{p}^i]_0 t^i\) coincides with the regularised quantum period of a Fano variety \(X\).
\end{definition}

Laurent polynomials arising as Landau--Ginzburg models are not unique: one can produce new ones via \emph{mutations}, special birational transformations of algebraic tori preserving the constant coefficient series (see~\cite{akhtar/minkowski} and~\cite[\nopp 3.2]{doran/modularity} for details).

\begin{definition}\label{definition:log-CY}
  Let \(\mathsf{p} \colon \mathbb{G}_m^n \rightarrow \mathbb{C}\) be a Laurent polynomial. Its compactification to \(\mathsf{f} \colon \mathcal{W} \rightarrow \mathbb{P}^1\), where \(\mathcal{W}\) is smooth projective (respectively, \(\mathcal{W}\) is projective with \(\mathbb{Q}\)-factorial terminal toric singularities of codimension \(> 3\), and a general fibre is smooth), and \(-K_{\mathcal{W}} \sim \mathsf{f}^{-1}(\infty)\), is a \emph{(weak) log Calabi--Yau compactification}. A smooth log Calabi--Yau compactification such that \(\mathsf{f}^{-1}(\infty)\) has simple normal crossings is called \emph{tame}.
\end{definition}

\begin{remark}
  For \(n \leqslant 4\) a general fibre is actually smooth by generic smoothness. Moreover, codimension~\(> 3\) assumption is related to the existence of \emph{maximal projective crepant partial desingularisations} of toric varieties (see Section~\ref{section:proof-amenable} or~\cite{batyrev/polyhedra} for details).
\end{remark}

\begin{remark}
  As \(\mathcal{W} \supset \mathbb{G}_m^n\) is smooth or has toric singularities (consequently, in both cases it has rational singularities and any its resolution is a rational variety), we obtain that \(h^q(\mathcal{W}, \mathcal{O}_\mathcal{W}) = 0\) for any \(q \neq 0\). Moreover, Serre duality implies \(h^q(\mathcal{W}, \mathcal{O}_\mathcal{W}(K_\mathcal{W})) = 0\) for \(q \neq n\). The exact sequence of ideal sheaves for (any) fibre \(F \sim -K_\mathcal{W}\) implies \(h^q(F, \mathcal{O}_F) = 0\) for \(0 < q < n - 1\). As we have \(\codim_{\mathcal{W}}(\Sing(\mathcal{W})) > 3\), and a general fibre is smooth, the adjunction formula implies that it is smooth Calabi--Yau.
\end{remark}

\begin{conjecture}\label{conjecture:logCY-compactification}
  A smooth Fano variety with very ample anticanonical class admits a Landau--Ginzburg model with a smooth tame compactification.
\end{conjecture}

\begin{example}[see~\cite{przyjalkowski/review,kasprzyk/laurent}]
  Conjecture~\ref{conjecture:logCY-compactification} holds for smooth del Pezzo surfaces, smooth Fano threefolds, and smooth Fano complete intersections in \(\mathbb{P}^n\).
\end{example}

\begin{definition}[see~{\cite[\nopp 2]{coates/mirror}}]
  Let \(\mathbb{V} = \mathbb{V}_f \oplus \mathbb{V}_v\) be the local system associated with the regularised quantum \(\mathcal{D}\)-module of a smooth Fano variety \(X\), where \(\mathbb{V}_f\) and \(\mathbb{V}_v\) are its fixed and variable parts. The \emph{ramification} \(\rf(\mathbb{V}_v)\) is
  \(
    \rf (\mathbb{V}_v) = \sum_{p \in \mathbb{P}^1} \dim(\mathbb{V}_x / \mathbb{V}_x^{T_p}),
  \)
  where \(T_p\) is the monodromy around \(p \in \mathbb{P}^1\), and \(x \in \mathbb{P}^1\) is a generic point. The \emph{ramification defect} is \(\rf (\mathbb{V}_v) - 2 \rk (\mathbb{V}_v) = \rk(\IH^1(\mathbb{P}^1, \mathbb{V}_v))\). If we have \(2 \rk (\mathbb{V}_v) = \rf (\mathbb{V}_v)\), i.e., \(\IH^1(\mathbb{P}^1, \mathbb{V}_v) = 0\), the local system \(\mathbb{V}_v\) is \emph{extremal}.
\end{definition}

\begin{conjecture}[see~{\cite[Expectation~4.10]{corti/extremal}}]
  Let \(H^{(n/2, n/2)}_{\prim}(X)\) be the space of middle-dimensional primitive Hodge classes on a Fano variety \(X\). The following inequality holds: \(\rk(\IH^1(\mathbb{P}^1, \mathbb{V}_v)) \leqslant \dim(H^{(n/2, n/2)}_{\prim}(X))\). In particular, \(\mathbb{V}_v\) is extremal for odd \(n\).
\end{conjecture}

\subsection{\texorpdfstring{\(\mathsf{M}\)}{M}-polynomials and toric MMP}\label{subsection:toric}

In this subsection we review \emph{Minkowski polynomials} defining Landau--Ginzburg models for smooth Fano threefolds with very ample anticanonical class. Closely following~\cite[\nopp 3.1--3.2]{doran/modularity}, we review a more general formalism of \emph{\(\mathsf{M}\)-polynomials}, which should govern Landau--Ginzburg models for Fano varieties with very ample anticanonical class in higher dimensions.

\begin{definition}
  For a lattice polytope \(\delta\) in a lattice \(M\), a \emph{lattice Minkowski decomposition} \(\mathsf{M}(\delta)\) of \(\delta\) is an unordered list of pairs \(\{(\delta_1,n_1),\ldots, (\delta_k,n_k)\}\), where~\(\delta_i\) is an integral polytope, and \(n_i\) is a positive integer so that
  \[
    \delta = \underbrace{\delta_1 + \cdots + \delta_1}_{n_1} + \cdots + \underbrace{\delta_k + \cdots + \delta_k}_{n_k}.
  \]
  Furthermore, for each \(\delta_i\) let \(\delta_{i,\mathbb{Z}} = \delta_i \cap M\). We also require that
  \[
    \delta \cap M =: \delta_\mathbb{Z} = \underbrace{\delta_{1,\mathbb{Z}} + \cdots + \delta_{1,\mathbb{Z}}}_{n_1} + \cdots + \underbrace{\delta_{k,\mathbb{Z}} + \cdots
      + \delta_{k,\mathbb{Z}}}_{n_k}.
  \]
  Let \(\mathsf{M}_1(\delta) = \{(\sigma_1,m_1),\dots, (\sigma_\ell, m_\ell)\}\) and \(\mathsf{M}_2(\delta) = \{(\delta_1,n_1),\dots, (\delta_k,n_k)\}\) be two lattice Minkowski decompositions of \(\delta\). We say that \(\mathsf{M}_1(\delta)\) \emph{refines} \(\mathsf{M}_2(\delta)\) if there is a partition \(I_1\cup \dots \cup I_k=\{1,\dots, \ell\}\) so that \(m_i \mid n_j\) for all \(i \in I_j\) and if for all \(j\),
  \[
    \sum_{i \in I_j} \left(\dfrac{m_i}{n_j}\right)\sigma_i = \delta_j.
  \]
\end{definition}

For example, any lattice Minkowski decomposition of \(\delta\) refines the trivial decomposition \(\{(\delta,1)\}\), and a lattice Minkowski decomposition \(\{(\delta,1),\dots, (\delta,1)\}\) refines \(\{(\delta,k)\}\). For a polytope \(\delta\), Minkowski decompositions of \(\delta\) are partially ordered by refinement. If \(\mathsf{M}_1(\delta)\) is a refinement of \(\mathsf{M}_2(\delta)\), we say that \(\mathsf{M}_2(\delta) \preceq \mathsf{M}_1(\delta)\). Observe that if \(\delta'\) is a face of \(\delta\), then any lattice Minkowski decomposition of \(\delta\) induces a lattice Minkowski decomposition of \(\delta'\) which we denote \(\mathsf{M}(\delta)|_{\delta'}\).

\begin{definition}
  Given a reflexive polytope \(P\), a \emph{Minkowski datum} for \(P\) is the data of a lattice Minkowski decomposition \(\mathsf{M}(\delta)\) of each face \(\delta\) of \(P\) so that for any pair of face \(\delta' \subseteq \delta\) the induced Minkowski decomposition of \(\delta'\) satisfies
  \(
    \mathsf{M}(\delta)|_{\delta'} \preceq \mathsf{M}(\delta')
  \).
\end{definition}

\begin{remark}
  Pairs \((P,\mathsf{M})\) are also partially ordered by refinement, that is, if \((P,\mathsf{M})\) and \((P,\mathsf{M}')\) are Minkowski decompositions of \(P\), we say that \((P,\mathsf{M}) \preceq (P,\mathsf{M}')\) if \(\mathsf{M}(\delta) \preceq \mathsf{M}'(\delta)\) for all \(\delta\). We let \(\mathsf{M}_{triv}\) denote the trivial Minkowski decomposition.
\end{remark}

If \(\mathsf{p}= \sum_{\rho \in P_\mathbb{Z}} c_\rho x^\rho\) is a Laurent polynomial with Newton polytope \(P\), then for each face \(\delta\) of \(P\) there is an associated \emph{face polynomial}
\[
  \mathsf{p}_\delta = \sum_{\rho \in \delta \cap P_\mathbb{Z}} c_\rho x^\rho.
\]

\begin{definition}
  Let \(\mathsf{M}\) be a Minkowski datum for a reflexive polytope \(P\). A Laurent polynomial \(\mathsf{p}\) with Newton polytope \(P\) is an \emph{\(\mathsf{M}\)-polynomial} if for each face \(\delta\) of \(P\) we have \(\mathsf{M}(\delta) = \{(\delta_1,n_1), \ldots, (\delta_k,n_k)\}\), and there are polynomials \(\mathsf{h}_{\delta_1},\dots, \mathsf{h}_{\delta_k}\) so that the Newton polytope of \(\mathsf{h}_{\delta_i}\) is \(\delta_i\), and
  \(
    \mathsf{p}_\delta = (\mathsf{h}_{\delta_1}^{n_1} \cdots \mathsf{h}_{\delta_k}^{n_k})x^\nu
  \)
  for some monomial \(x^\nu\).
\end{definition}

Definition of \(\mathsf{M}\)-polynomials is inspired by \emph{Minkowski polynomials} of~\cite{akhtar/minkowski}.

\begin{definition}[see~\cite{akhtar/minkowski}]\label{definition:minkowski}
  Suppose \(\mathsf{p}\) is a Laurent polynomial with reflexive Newton polytope of dimension \(3\). We say that \(\mathsf{p}\) is a \emph{Minkowski polynomial} if it is an \(\mathsf{M}\)-polynomial so that for each face \(\delta\) we have \(\mathsf{M}(\delta) = \sum_{i=1}^n n_i \delta_i\), where \(\delta_i\) is either affinely equivalent to an \(A_n\)-triangle with vertices \((0,0),(1,0)\) and \((0,n)\), or an interval of length \(1\). We require that if \(\delta_i\) is an \(A_n\)-triangle, then, after possible change of variables, we have \(\mathsf{p}_{\delta_i} = (1 + y)^n + x\). If \(\delta_i\) is an interval of length 1, we require that, after a possible change of variables, \(\mathsf{p}_{\delta_i} = (1 + x)\).
\end{definition}

Suppose we are given an \(\mathsf{M}\)-polynomial \(\mathsf{p}\) with reflexive Newton polytope \(P\). Let~\(P^{\vee}\) denote the dual polytope, and let \({X}_{P^{\vee}}\) denote a crepant resolution of the toric variety associated with the face fan of the polytope \(P^{\vee}\), if such a resolution exists. Such a resolution always exists if \(\dim(P) \leq 3\). In this case the vanishing locus of \(\mathsf{p}\) in \(X_{P^{\vee}}\) provides a possibly singular hypersurface, which we can denote \(F_{\mathsf{p}}\), determined by the vanishing of a section \(\sigma_{\mathsf{p}}\) of the anticanonical bundle of \(X_{P^{\vee}}\). Each cone \(c\) in the fan \(\Sigma\) underlying \(X_{P^{\vee}}\) corresponds to a toric stratum \(\mathbb{T}_c\) in \(X_{P^{\vee}}\). Let \(\rho_1, \ldots, \rho_k\) be ray generators of \(c\). The intersection \(\mathbb{T}_c \cap F_{\mathsf{p}}\) is the vanishing locus of \(\mathsf{p}_{c^*}\), where
\[
  c^* = \{ m \in P \mid \langle m, \rho_i \rangle = -1 \, \text{ for all } i = 1, \ldots, k\}.
\]

\begin{definition}\label{definition:transverse-nondeg}
  We say that an \(\mathsf{M}\)-polynomial \(\mathsf{p}\) is \emph{transverse non-degenerate} if for each proper face \(\delta\) of \(P\) and each subface \(\tau \subseteq \delta\) of positive dimension the following hold:
  \begin{itemize}
  \item the relative normal cone \(N_{\tau/\delta}\) is unimodular in the dual of the saturated lattice \(M \cap \operatorname{span}_{\mathbb{R}}(\delta-\delta)\), hence there exist normal coordinates \(z_1, \ldots, z_{\dim(\delta) - \dim(\tau)}\) and a toric chart of the form \(U_{\delta,\tau} = \mathbb{A}^{\dim(\delta) - \dim(\tau)} \times \mathbb{G}_m^{\dim(\tau)}\);
  \item the closure of \(V(\mathsf{p}_\delta)_{\mathrm{red}}\) in \(U_{\delta,\tau}\) is SNC along the stratum \(z_1 = \ldots z_{\dim(\delta) - \dim(\tau)} = 0\).
  \end{itemize}
\end{definition}

Suppose we are given a transverse non-degenerate \(\mathsf{M}\)-polynomial \(\mathsf{p}\) supported on a reflexive Newton polytope \(P\). We obtain a pencil of anticanonical hypersurfaces
\[
  \mathcal{P}_{\mathsf{p}}: s \sigma_{\mathsf{p}} + t \prod_{\rho \in \Sigma(1)} x_\rho = 0.
\]
Let \(D_P\) denote the snc union of all torus invariant divisors in \(X_{P^{\vee}}\). The base locus of this pencil is the intersection of \(\sigma_{\mathsf{p}}\) with \(D_P\). Given a toric stratum \(\mathbb{T}_c\) attached to a cone \(c\) of~\(\Sigma\), the vanishing locus of \(\sigma_\mathsf{p}\) in \(\mathbb{T}_c\) can be computed using \(P^{\vee}\) as discussed before.

\begin{proposition}[Przyjalkowski, see~{\cite[Proposition~3.6]{doran/modularity}}]\label{proposition:compactification}
  Let \(\mathsf{p}\) be a transverse non-degenerate Laurent polynomial in \(n\) variables with reflexive Newton polytope \(P\). Assume that the toric Fano variety attached to \(P^{\vee}\) admits a crepant resolution \(X_{P^{\vee}}\). There exists a tame compactification obtained by systematically resolving the base locus of \(\mathcal{P}_\mathsf{p}\), such that its fibres are compactifications of the fibres of \(\mathsf{p}\).
\end{proposition}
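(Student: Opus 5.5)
This is Przyjalkowski's construction, taken from \cite[Proposition~3.6]{doran/modularity}. I would prove it by explicitly resolving the base locus of the pencil \(\mathcal{P}_{\mathsf{p}}\) on the crepant resolution \(X_{P^{\vee}}\).

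\textbf{Step 1: the starting pencil.} Since \(P\) is reflexive, both sections \(\sigma_{\mathsf{p}}\) and \(\prod_{\rho} x_\rho\) lie in \(H^0(X_{P^{\vee}}, -K)\). The member \(\prod_{\rho} x_\rho = 0\) is the boundary \(D_P\), which is simple normal crossing because \(X_{P^{\vee}}\) is smooth and its fan is simplicial and unimodular. On the torus, the pencil restricts to the fibres \(\mathsf{p} = \lambda\). The base locus is \(B = F_{\mathsf{p}} \cap D_P\), and it decomposes into the pieces \(V(\sigma_{\mathsf{p}}) \cap D_\rho\).

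\textbf{Step 2: control the geometry of the base locus.} I would use transverse non-degeneracy (Definition~\ref{definition:transverse-nondeg}). On each stratum \(\mathbb{T}_c\), the locus \(F_{\mathsf{p}} \cap \mathbb{T}_c\) is cut out by the face polynomial \(\mathsf{p}_{c^*}\). The SNC condition on the closure of \(V(\mathsf{p}_\delta)_{\mathrm{red}}\) in the charts \(U_{\delta,\tau}\) then gives the following. Locally along \(D_P\), the reduced components \(Z_{\rho,j}\) of \(V(\sigma_{\mathsf{p}})|_{D_\rho}\) are smooth. They meet the other boundary divisors and each other transversally, forming an SNC configuration inside \(D_P\). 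In general they occur with multiplicities \(n_j\), coming from the powers \(\mathsf{h}_{\delta_i}^{n_i}\) in the face factorisation.

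\textbf{Step 3: blow up.} I would blow up the components \(Z_{\rho,j}\) one at a time, in a fixed order, and repeat \(n_j\) times on the strict transforms of the boundary and base loci. Each centre is a smooth codimension-2 subvariety contained in one fibre \(D\) of the pencil and in a smooth member \(F\) near the generic point. Hence the blow-up is crepant relative to the pencil: \(K\) changes by \(E\), and the fibre at infinity acquires \(E\) with multiplicity one, so \(-K_{\mathcal{W}} \sim \mathsf{f}^{-1}(\infty)\) is preserved. The multiplicity of the base locus along the strict transform drops by one after each blow-up, so after finitely many steps the pencil is base-point free and defines \(\mathsf{f} \colon \mathcal{W} \to \mathbb{P}^1\). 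Its fibres compactify the fibres of \(\mathsf{p}\), since nothing is changed over the torus.

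\textbf{Main obstacle.} The hard part is showing that each intermediate centre stays smooth and that the new exceptional divisors, together with the strict transform of \(D_P\), stay SNC. Only then is \(\mathcal{W}\) smooth and \(\mathsf{f}^{-1}(\infty)\) SNC, which is tameness. Where several components meet, at the intersections \(Z_{\rho,j} \cap Z_{\rho',j'}\) or at deeper strata, I would first try a local computation in the toric chart \(U_{\delta,\tau}\). There the local equations become \(z_1 \cdots z_k \cdot u = \prod_i h_i^{n_i}\) with SNC data, and I would check that blowing up \(\{z_1 = h_1 = 0\}\) yields the same shape of equation with one multiplicity lowered. This is a compatibility check between the refinements \(\mathsf{M}(\delta)|_{\delta'} \preceq \mathsf{M}(\delta')\) along faces and the order of the blow-ups. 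If it succeeds, an induction on the total multiplicity \(\sum n_j\) finishes the proof.
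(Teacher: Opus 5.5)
Your outline is the argument the paper relies on. The paper takes the statement from \cite[Proposition~3.6]{doran/modularity} without reproving it, and uses transverse non-degeneracy only to guarantee that the reduced base locus is SNC (Remark~\ref{remark:compactification}); that is your Step~2. Step~3, however, contains two genuine errors.

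The first is the anticanonical bookkeeping. At a generic point of a centre \(Z\subset D_\rho\), both \(D_P\) and a general member of the pencil have multiplicity exactly one. So the pulled-back pencil has fixed part exactly \(E\), and the new pencil lies in \(|\pi^*(-K_X)-E|=|-K_{X'}|\). Its member at infinity is the \emph{strict transform} of \(D_P\): \(E\) is subtracted from the fibre at infinity, not added to it. If it were added, you would get \(\mathsf{f}^{-1}(\infty)\sim -K_{X'}+E\), so your two changes add up rather than cancel. Consequently \(\mathsf{f}^{-1}(\infty)\) is the strict transform of \(D_P\), and the exceptional divisors are horizontal or lie in finite fibres. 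Tameness therefore needs the centres to be smooth and to have normal crossings with the strict transform of \(D_P\). It does not need an SNC condition on the exceptional divisors.

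The second, more serious, error is in the list of centres and the termination argument. Blowing up a component \(Z\subset D_\rho\) of multiplicity \(n\geqslant 2\) does not simply lower \(n\) by one. Wherever \(Z\) meets another boundary divisor \(D_{\rho'}\), the exceptional divisor cuts the strict transform of \(D_{\rho'}\) in a \emph{new} base component of multiplicity \(n-1\). Meanwhile the component of \(D_{\rho'}\) continuing \(Z\) keeps multiplicity \(n\).

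Take the local model with base ideal \((h^3,zw)\), \(D_\rho=\{z=0\}\), \(D_{\rho'}=\{w=0\}\), and blow up \(\{z=h=0\}\). In the chart \(z=z_1h\) the base ideal becomes \((h^2,z_1w)\). In the chart \(h=h_1z\) it becomes \((h_1^3z^2,w)\). The multiplicities \(3,3\) thus become \(2,2,3\). Three things follow:
\begin{itemize}
\item blowing up each original \(Z_{\rho,j}\) exactly \(n_j\) times does not make the pencil base-point free;
\item your proposed local check fails in the second chart, where \(h_1\) keeps exponent \(3\) and the exceptional divisor enters with exponent \(2\);
\item \(\sum n_j\) can increase, here from \(6\) to \(7\).
\end{itemize}

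The repair is to use the SNC configuration of Step~2 to show that exactly one base component of \(D_{\rho'}\) contains \(Z\cap D_{\rho'}\), and that it has multiplicity \(n\). Then each blow-up of a component of multiplicity \(n\) replaces it by finitely many components of multiplicity \(n-1\), and leaves all other multiplicities unchanged. Termination follows by induction on the multiset of multiplicities in the multiset ordering, not on their sum.
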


\begin{remark}\label{remark:compactification}
  This crepant resolution exists provided that \(P^{\vee}\) admits a fine regular star triangulation with associated smooth face fan (see~\cite{batyrev/polyhedra} or~\cite[\nopp 2.3]{doran/k-theory}). For \(\dim(P) = 4\) one could also use the existence of \emph{partial} crepant resolutions to construct a weak log-Calabi--Yau compactification (see~\cite[Remark~3.7]{doran/modularity}). Namely, for \(\dim(P) = 4\) a partial crepant resolution has only isolated torus-invariant singularities, they do not lie in the base locus of the pencil as the corresponding section does not vanish at such points. Consequently, these singular points are contained in the fibre over infinity of the obtained family.

  The original statement (i.e.,~\cite[Proposition~3.6]{doran/modularity}) imposes a much weaker condition on a Laurent polynomial yet its proof assumes that the reduced base locus divisor has simple normal crossings. To ensure this, we use transverse non-degeneracy condition.
\end{remark}

Here we also review some facts from Batyrev's approach to Minimal Model Program of non-degenerate toric hypersurfaces (see~\cite{batyrev/MMP}).

\begin{definition}
  A \(k\)-dimensional lattice polytope \(P' \subset \mathbb{R}^k\), \(k = 1, \ldots, d\), is called
\emph{lattice projection} of a \(d\)-dimensional lattice polytope \(P \subset \mathbb{R}^d\) if there exists an affine map \(\pi \colon \mathbb{R}^d \rightarrow \mathbb{R}^k\) inducing a surjective map of lattices \(\pi \colon \mathbb{Z}^d \rightarrow \mathbb{Z}^k\), and \(\pi(P) = P'\).
\end{definition}

\begin{definition}
  A \(d\)-dimensional lattice polytope \(P\) has \emph{width 1} if \(P\) has a lattice projection on the lattice segment \([0, 1] \subset \mathbb{R}\).
\end{definition}

\begin{lemma}
  A non-degenerate toric hypersurface \(Z \subset \mathbb{T}^d_K\) over a field \(K\) with Newton polytope of width 1 is rational over \(K\).
\end{lemma}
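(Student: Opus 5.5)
Since \(P\) has width \(1\), there is a lattice projection \(\pi \colon \mathbb{Z}^d \rightarrow \mathbb{Z}\) with \(\pi(P) = [0,1]\). The plan is to use it to write \(Z\) as the graph of a rational function.

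First, change coordinates on the torus. Because \(\pi\) is a surjection of lattices, we can complete the linear part of \(\pi\) to a \(\mathbb{Z}\)-basis of the lattice. This gives an automorphism of \(\mathbb{T}^d_K\), defined over \(K\) and given by monomial changes of variables, after which \(\pi\) is the last coordinate up to translation. Multiplying the defining Laurent polynomial by a monomial (which does not change \(Z\)), we may assume \(P \subset \mathbb{R}^{d-1} \times [0,1]\) with both slices nonempty.

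In these coordinates the equation has the form \(f = g(x_1,\ldots,x_{d-1}) + x_d\, h(x_1,\ldots,x_{d-1})\), where \(g\) and \(h\) are nonzero Laurent polynomials over \(K\). Their Newton polytopes are the faces \(P \cap \{x_d = 0\}\) and \(P \cap \{x_d = 1\}\) of \(P\).

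Next, on the open subset where \(h \neq 0\), the hypersurface \(Z\) is the graph of \(x_d = -g/h\), so this open subset is isomorphic to the open subset of \(\mathbb{T}^{d-1}_K\) where \(h \neq 0\) and \(g \neq 0\). The condition \(g \neq 0\) is forced by \(x_d \neq 0\). Consequently this piece of \(Z\) is a nonempty open subset of \(\mathbb{T}^{d-1}_K\), which is rational over \(K\).

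The main point to check is that this open subset is dense in \(Z\), i.e. that \(Z\) has no irreducible component contained in \(\{h = 0\}\). On such a component we would also have \(g = 0\), so the component would be contained in \(\{g = h = 0\} \times \mathbb{G}_m\). This locus has dimension at most \(d-1\), and equality holds only when \(g\) and \(h\) share a common factor. Here we use non-degeneracy. A common factor \(q\) of \(g\) and \(h\) would make \(f = q\cdot(g' + x_d h')\) reducible. Then the singular locus of \(Z\) would contain \(\{q = 0\} \cap \{g' + x_d h' = 0\}\), which is nonempty: for \(d \geqslant 2\), take a general point with \(q = 0\) and \(h' \neq 0\) and solve for \(x_d\), using \(g' \neq 0\) generically. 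This contradicts the smoothness of \(Z\) in the torus.

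Hence \(\gcd(g,h) = 1\), the locus \(\{g = h = 0\}\) has codimension at least \(2\) in \(\mathbb{T}^{d-1}\), and \(Z\) is irreducible and birational to \(\mathbb{T}^{d-1}_K\) over \(K\). The case \(d = 1\) is immediate, since \(Z\) is then a single \(K\)-point.
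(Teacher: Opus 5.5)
Your route is the right one and matches the reasoning the paper relies on. The paper states the lemma without proof, citing Batyrev, and Remark~\ref{remark:rationality} calls rationality immediate once \(f = g + x_d h\) is linear in one variable. The coordinate change and the graph description of \(Z \cap \{h \neq 0\}\) are fine.

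The gap is in the step that rules out components of \(Z\) inside \(\{h = 0\}\). You claim that a common factor \(q\) yields a point of \(\{q = 0\} \cap \{g' + x_d h' = 0\}\) in the torus, because a general point of \(\{q = 0\}\) has \(g' h' \neq 0\). This fails whenever \(q\) also divides \(g'\) or \(h'\), i.e. when \(g\) or \(h\) has a repeated factor. In that case smoothness of \(Z\) in the torus gives no contradiction at all.

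Concretely, take \(d = 2\) and \(f = (1 + x_1)^2 + x_2 (1 + x_1) = (1 + x_1)(1 + x_1 + x_2)\). Its Newton polygon is two-dimensional of width one. The components \(\{x_1 = -1\}\) and \(\{x_2 = -1 - x_1\}\) meet only at \(x_2 = 0\), outside \(\mathbb{T}^2\). So \(Z\) is smooth in the torus, yet it is reducible and not rational. Here \(g' = 1 + x_1\) vanishes identically on \(\{q = 0\}\), so solving for \(x_2\) gives \(x_2 = 0\). Your argument only ever uses smoothness of \(Z\) itself, i.e. non-degeneracy for the face \(P\). What excludes this example is non-degeneracy along a proper face: the edge \(x_2 = 0\) has the non-reduced face polynomial \((1 + x_1)^2\).

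To repair it, use non-degeneracy on the facets \(P \cap \{x_d = 0\}\) and \(P \cap \{x_d = 1\}\). Their face polynomials are \(g\) and \(x_d h\), whose zero loci are \(\{g = 0\} \times \mathbb{G}_m\) and \(\{h = 0\} \times \mathbb{G}_m\). Smoothness of these forces \(g\) and \(h\) to be square-free. Now take \(q\) to be an \emph{irreducible} common factor. Then \(q \nmid g'\) and \(q \nmid h'\), so a general point of \(\{q = 0\}\) really does have \(g' h' \neq 0\). Setting \(x_d = -g'/h' \in \mathbb{G}_m\) gives a singular point of \(Z\) in the torus, which is the contradiction you wanted.

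Alternatively, invoke the standard fact that for \(d \geqslant 2\) a non-degenerate hypersurface with \(d\)-dimensional Newton polytope is irreducible. With either fix, the rest of your proof goes through.
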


\begin{conjecture}
  Let \(P\) be a \(d\)-dimensional lattice polytope. Assume that for any field \(K\) and any toric hypersurface \(Z \subset \mathbb{T}^d_K\) with Newton polytope \(P\) is irreducible and birational to \(\mathbb{A}^{d-1}_K\) over \(K\). Then the polytope \(P\) is of width one.
\end{conjecture}

\begin{definition}
  For a non-zero lattice vector \(\textbf{a} = (a_1, \ldots, a_d) \in \mathbb{Z}^d \setminus \{0 \}\) consider the \emph{integral supporting hyperplane} of \(P\):
  \[
    H_{\textbf{a}, P} :=
    \left \{
      \textbf{x} = (x_1, \ldots, x_d) \in \mathbb{R}^d \, \mid \,
      \sum_{i=1}^d a_i x_i = \min_P(\textbf{a}) :=
      \min_{\textbf{y} \in P} \left( \sum_{i=1}^d a_i y_i \right)
    \right \}.
  \]
  If \(\gcd(a_1, \ldots, a_d)=1\), the \emph{integral distance} between a point \(\textbf{y} = (y_1, \ldots, y_d) \in P\) and integral supporting hyperplane \(H_{\textbf{a},P}\) is the following number:
  \[
    \dist_{\mathbb{Z}}(\textbf{y}, H_{\textbf{a}, P}) :=
    \sum_{i=1}^d a_i y_i \; - \;
    \min_P(\textbf{a}) \geqslant 0.
  \]
\end{definition}

\begin{definition}
  Let \(P \subset \mathbb{R}^d\) be an arbitrary \(d\)-dimensional convex polytope. The set \(F(P)\) of all points in \(P\) having integral distance at least \(1\) to any integral supporting hyperplane \(H_{\textbf{a}, P}\) is called the \emph{Fine interior} of \(P\), i.e.,
  \[
    F(P) := \{ \textbf{y} \in \mathbb{R}^d \, \mid \, \dist_{\mathbb{Z}} (\textbf{y}, H_{\textbf{a}, P}) \geqslant 1, \;\; \forall \textbf{a} \in {\mathbb{Z}}^d \setminus \{0\} \}.
  \]
  If \(F(P)\) is empty, then the polytope \(P\) is called \emph{F-hollow}.
\end{definition}

\begin{theorem}
  A non-degenerate toric hypersurface \(Z \subset \mathbb{T}^d\) with Newton polytope~\(P\) has negative Kodaira dimension if and only if \(P\) is F-hollow.
\end{theorem}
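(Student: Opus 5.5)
This is Batyrev's criterion, and I would prove it by computing the plurigenera of a smooth compactification of \(Z\) in terms of lattice points, following the Khovanskii-type argument for \(p_g\). Choose a smooth projective toric variety \(X_\Sigma\) whose fan refines the normal fan of \(P\). By non-degeneracy, the closure \(\overline{Z} \subset X_\Sigma\) is smooth and meets every torus orbit transversally. Hence \(\overline{Z}\) is a smooth compactification of \(Z\), and \(\kappa(Z) = \kappa(\overline{Z})\).

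On the torus, every rational \(m\)-canonical form on \(Z\) can be written as
\[
  \omega_g = g \cdot \Bigl( \operatorname{Res}_Z \frac{dx_1/x_1 \wedge \cdots \wedge dx_d/x_d}{f} \Bigr)^{\otimes m},
\]
where \(f\) is the defining polynomial of \(Z\) and \(g\) is a Laurent polynomial taken modulo \(f\). The residue form is nowhere vanishing on \(Z\). Therefore \(\omega_g\) is regular on \(\overline{Z}\) if and only if, for each ray \(\mathbf{a}\) of \(\Sigma\), its order along \(D_{\mathbf{a}} \cap \overline{Z}\) is non-negative.

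A local computation at a transversal point of \(D_{\mathbf{a}} \cap \overline{Z}\) gives the order: \(dx/x\) contributes a pole of order \(1\), \(f\) contributes \(\min_P(\mathbf{a})\), and \(g\) contributes \(\min_{\mathrm{supp}(g)}(\mathbf{a})\). So the condition becomes
\[
  \langle \mathbf{a}, \rho \rangle - m \min_P(\mathbf{a}) \geqslant m
  \quad \text{for all } \rho \in \mathrm{supp}(g).
\]
In the language of the paper, \(\mathrm{supp}(g)\) must lie in the set of points of \(mP\) at integral distance \(\geqslant m\) from \(H_{\mathbf{a}, mP}\).

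The key step, and the main obstacle, is to show that imposing this for the rays of \emph{all} sufficiently fine fans \(\Sigma\) is the same as imposing it for \emph{every} primitive \(\mathbf{a} \in \mathbb{Z}^d \setminus \{0\}\). This is exactly why the Fine interior appears rather than the ordinary interior. The argument I would use is as follows.

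\begin{itemize}
\item Any primitive \(\mathbf{a}\) becomes a ray after a toric blow-up. This does not change the plurigenera, because they are birational invariants.
\item Since \(\overline{Z}\) is transversal to the new divisor as well, \(D_{\mathbf{a}} \cap \overline{Z}\) is non-empty whenever the corresponding face of \(P\) has positive dimension. In that case the inequality is genuinely forced.
\item When the face is a vertex, \(D_{\mathbf{a}} \cap \overline{Z}\) is empty. I would show that these vectors impose no extra condition beyond those coming from \(\mathbf{a}\) whose face has positive dimension. This is the delicate point, and it needs care with the convention for \(F(P)\). I would first try to check it by expressing such an \(\mathbf{a}\) as a positive combination of neighbouring normals.
\end{itemize}

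I also need that representatives \(g\) modulo \(f\) can be normalised. Concretely, I would check that the lattice points of \(m F(P)\) give linearly independent sections, using that no non-zero multiple of \(f\) has support inside \(m F(P)\). The outcome is
\[
  h^0(\overline{Z}, mK_{\overline{Z}}) \geqslant \#\bigl( mF(P) \cap \mathbb{Z}^d \bigr),
\]
with the sections vanishing exactly when \(m F(P)\) has no lattice points.

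To conclude, note that \(F(P)\) is a rational polytope, since finitely many \(\mathbf{a}\) already cut it out. If \(F(P) \neq \emptyset\), then \(m F(P)\) contains a lattice point for some \(m\), so some plurigenus is positive and \(\kappa \geqslant 0\). Conversely, if \(P\) is F-hollow, then every \(m F(P)\) is empty, so all plurigenera vanish and \(\kappa(Z) = -\infty\).
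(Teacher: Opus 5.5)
The paper does not prove this statement; it quotes it from Batyrev's work. So I assess your argument on its own. The direction $F(P)\neq\emptyset\Rightarrow\kappa\geqslant 0$ is fine.

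\begin{itemize}
\item $F(P)$ is rational. It is cut out by the Hilbert basis vectors $h_i$ of the cones of the normal fan: if $\mathbf a=\sum c_ih_i$ with $c_i\in\mathbb{Z}_{\geqslant 0}$ inside one cone, then linearity of $\min_P$ on that cone gives $\langle\mathbf a,y\rangle\geqslant\min_P(\mathbf a)+\sum c_i$.
\item For $\rho\in mF(P)\cap\mathbb{Z}^d$, the monomial $x^\rho$ has \emph{exact} order $\langle\mathbf a,\rho\rangle$ along each $D_{\mathbf a}$. Hence $x^\rho\omega^{\otimes m}$ is a non-zero regular section.
\end{itemize}

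However, drop the independence claim, because it is false. For $P=\operatorname{conv}((0,0),(4,0),(0,4))$ (a plane quartic of genus $3$) one has $4F(P)=(4,4)+P$. So $x_1^4x_2^4f$ is supported in $4F(P)$, and $h^0(4K)=14<15=\#(4F(P)\cap\mathbb{Z}^2)$. Only non-vanishing is needed for this direction, so the error is harmless there.

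The genuine gap is the converse, which is the actual content of the theorem.

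\begin{itemize}
\item For a general $g$, your local computation only gives $\operatorname{ord}(g)\geqslant\min_{\operatorname{supp}(g)}(\mathbf a)$. Equality fails as soon as the face part $g_{\mathbf a}$ vanishes on a component of $D_{\mathbf a}\cap\overline{Z}$. This happens, e.g., for any $g+hf$, or at some of the several components lying over an edge.
\item Hence regularity does not force $\operatorname{supp}(g)\subset mF(P)$. The claim that all sections vanish when $mF(P)\cap\mathbb{Z}^d=\emptyset$ is asserted, not proved. The normalisation modulo $f$ you mention is exactly the missing step, and your concrete check addresses the opposite inequality.
\item The vertex step is false as a statement about polytopes. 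Take $P=\operatorname{conv}((0,0),(0,3),(9,3))$ and $\mathbf a=(0,1)$, whose face is the vertex $0$. The point $(1,\tfrac23)$ satisfies all three edge inequalities but lies at integral distance $\tfrac23$ from $H_{\mathbf a,P}$. The identity $(0,1)=\tfrac13(1,0)+\tfrac13(-1,3)$ shows why positive combinations of neighbouring normals cannot help.
\item Correspondingly, $x_1^3x_2^2\,\omega^{\otimes 3}$ is regular on $\overline{Z}$ although $(3,2)\notin 3F(P)$. It becomes a combination of monomials in $3F(P)$ only modulo $f$, via $x_1^3x_2^2f$.
\end{itemize}

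So a local analysis only sees the larger polytope $F'(P)$, cut out by the $\mathbf a$ with positive-dimensional faces. You would still need both $F(P)=\emptyset\Rightarrow F'(P)=\emptyset$ and a normalisation modulo $f$ that works simultaneously for all $\mathbf a$.

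Some global input is required. One possible route:

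\begin{itemize}
\item Choose $\Sigma$ to contain the finitely many rays cutting out $F(P)$. Then the polytope of the $\mathbb{Q}$-divisor $K_X+\overline{Z}$ is exactly $F(P)$, so for F-hollow $P$ this divisor is not pseudo-effective.
\item A toric $(K_X+\overline{Z})$-MMP then ends in a Mori fibre space. On its general fibre, the transform of $\overline{Z}$ has anti-ample log canonical class, which gives uniruledness. When the fibres are curves, the transform instead meets each fibre in a single point, so $Z$ is birational to the toric base. Either way $\kappa=-\infty$.
\end{itemize}

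Alternatively, surjectivity of $H^0(X,m(K_X+\overline{Z}))\to H^0(\overline{Z},mK_{\overline{Z}})$ would suffice. But that needs $H^1(X,mK_X+(m-1)\overline{Z})=0$, which is not automatic.
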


\begin{corollary}[see~\cite{batyrev/MMP,kollar/surfaces}]\label{corollary:toric-surfaces}
  Let \(Z \subset \mathbb{T}^3_K\) be a non-degenerate toric hypersurface with a Newton polytope of width 1, where \(K = \mathbb{R}\) or \(K = \overline{\mathbb{Q}} \cap \mathbb{R}\). Then the surface \(Z\) is birational over \(K\) to \(\mathbb{P}^2\), \(\mathbb{P}^1 \times \mathbb{P}^1\) or a Hirzebruch surface.
\end{corollary}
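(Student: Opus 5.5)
The plan is to reduce the statement to Batyrev's birational classification of hypersurfaces with Newton polytope of width one, and then run the toric minimal model program for surfaces over \(K\). By definition, width one means there is a lattice projection \(\pi\colon P \to [0,1]\). After a unimodular change of coordinates in \(\mathbb{T}^3_K\) we may assume \(\pi\) is the first coordinate. Then the defining Laurent polynomial of \(Z\) takes the form
\[
  g(x_1,x_2,x_3) = g_0(x_2,x_3) + x_1\, g_1(x_2,x_3),
\]
with \(g_0, g_1 \in K[x_2^{\pm},x_3^{\pm}]\). Both are non-zero, since \(P\) is three-dimensional and non-degenerate.

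\emph{Step 1: rationality over \(K\).} On \(Z\) the equation can be solved for \(x_1\), giving \(x_1 = -g_0/g_1\). So projecting to \((x_2,x_3)\) gives a birational map \(Z \dashrightarrow \mathbb{T}^2_K\), and this map is defined over \(K\). This is exactly the earlier lemma on width-one polytopes. Hence \(Z\) is \(K\)-rational and, in particular, geometrically rational with \(Z(K)\) Zariski dense.

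\emph{Step 2: minimal models.} Take a smooth projective model \(\overline{Z}\) over \(K\), for instance by resolving the closure of \(Z\) in a toric compactification. Then run the minimal model program for surfaces over \(K\) (Iskovskikh–Mori, see~\cite{kollar/surfaces}), which contracts Galois-orbits of disjoint \((-1)\)-curves. The output is either a del Pezzo surface of Picard rank one over \(K\), or a conic bundle over a curve \(C\) with relative Picard rank one. Since \(\overline{Z}\) is \(K\)-rational, \(C\) is a \(K\)-rational curve with a \(K\)-point, so \(C\simeq \mathbb{P}^1_K\).

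\emph{Step 3: identifying the model.} In the conic bundle case, use the rational map \(Z \to \mathbb{P}^1\) given by \(x_2\) or \(x_3\), together with the structure from Step 1, to exhibit a birational equivalence over \(K\) with a geometrically ruled surface \(\mathbb{F}_n\). The idea is that the fibration with general fibre a \(K\)-rational curve admitting a section gives a \(\mathbb{P}^1\)-bundle model, after performing elementary transformations at singular fibres over \(K\).

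In the rank-one del Pezzo case, \(K\)-rationality together with Iskovskikh's rationality criterion leaves only degrees \(\ge 5\). Degree \(9\) gives \(\mathbb{P}^2\), since a Severi–Brauer surface with a \(K\)-point is trivial. Degree \(8\) gives a quadric, which is \(\mathbb{P}^1\times\mathbb{P}^1\) or a Weil restriction; a Weil restriction is still birational to \(\mathbb{P}^2\) because it has a \(K\)-point. Degrees \(5\) and \(6\) with a \(K\)-point are birational to \(\mathbb{P}^2\) by the standard projection and Sarkisov links. In every case the outcome is one of the stated surfaces.

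The main obstacle is the real-closed-field subtlety when \(K = \mathbb{R}\) or \(K=\overline{\mathbb{Q}}\cap\mathbb{R}\). Galois orbits of \((-1)\)-curves may be swapped by complex conjugation, so a contraction over \(K\) can land on a non-split form such as a quadric without a split ruling. I would handle this by using that \(K\) is real closed, so every \(K\)-form with a \(K\)-point is explicitly known (Comessatti's classification). I would also exploit the fact that Step 1 gives \(K\)-rationality directly, which already forces the birational class to be that of \(\mathbb{P}^2_K\). The list in the statement then merely records the possible minimal models in this class.
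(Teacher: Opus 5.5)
Your Step 1 is exactly Batyrev's width-one rationality lemma, which the paper cites, and it already proves the corollary as stated, because a \(K\)-rational surface is birational over \(K\) to \(\mathbb{P}^2_K\); Steps 2--3 follow the Iskovskikh--Mori/Koll\'ar minimal model route the citation points to, and they matter only for the stronger ``blow-up of a minimal surface'' form used in the proof of Proposition~\ref{proposition:amenable}. One correction to your closing remark: over a real closed \(K\), the Picard-rank-one quadric whose rulings are swapped by conjugation (your Weil restriction \(R_{K(i)/K}\mathbb{P}^1\), the sphere) is itself a \(K\)-minimal \(K\)-rational surface, so \(\mathbb{P}^2\), \(\mathbb{P}^1\times\mathbb{P}^1\) and the Hirzebruch surfaces do not exhaust the minimal models in this birational class.
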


\subsection{Properties of higher Chow groups}\label{subsection:chow}

In this subsection we review some properties of higher Chow groups. We refer to~\cite{elbaz-vincent/higher-chow} for a short introduction.

\begin{theorem}
  If \(X\) is a quasi-projective variety over a field, \(D \subset X\) is a closed subvariety of pure codimension \(r\), and \(Y = X \setminus D\). Then there exists the exact \emph{localisation sequence} of higher Chow groups:
  \[
    \ldots \rightarrow \CH^l(D, n) \rightarrow \CH^{l + r}(X, n) \rightarrow \CH^{l + r}(Y, n) \rightarrow
    \CH^l(D, n - 1) \rightarrow \ldots
  \]
\end{theorem}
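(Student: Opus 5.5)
The plan is to prove this at the level of Bloch's cycle complexes and pass to homology. Let \(z^p(X, \bullet)\) be the simplicial abelian group whose \(n\)-simplices are codimension-\(p\) cycles on \(X \times \Delta^n\), \(\Delta^n = \Spec K[t_0, \ldots, t_n]/(\sum t_i - 1)\), meeting all faces \(X \times \Delta^m\) properly. Its homotopy groups are \(\CH^p(X, n) = \pi_n z^p(X, \bullet)\).

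Since \(D\) has pure codimension \(r\) in \(X\), pushforward along the closed immersion \(i \colon D \hookrightarrow X\) gives a map of complexes
\[
  i_* \colon z^{l}(D, \bullet) \rightarrow z^{l + r}(X, \bullet).
\]
A cycle on \(D \times \Delta^n\) meets the faces of \(D \times \Delta^n\) properly if and only if its image meets the faces of \(X \times \Delta^n\) properly, because the codimension shifts uniformly by \(r\). By the same reasoning this map is injective. Flat pullback along the open immersion \(j \colon Y \hookrightarrow X\) gives
\[
  j^* \colon z^{l+r}(X, \bullet) \rightarrow z^{l+r}(Y, \bullet),
\]
and \(j^* i_* = 0\). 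So there is a short sequence
\[
  0 \rightarrow z^l(D, \bullet) \rightarrow z^{l+r}(X, \bullet) \rightarrow z^{l+r}(Y, \bullet).
\]
It is exact on the left and in the middle: a cycle on \(X \times \Delta^n\) restricting to zero on \(Y \times \Delta^n\) is supported on \(D \times \Delta^n\), and it is in good position there.

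The whole content is that the last map is not surjective, and the key step is to show that the inclusion
\[
  z^{l+r}(X, \bullet)/z^l(D, \bullet) \hookrightarrow z^{l+r}(Y, \bullet)
\]
is a quasi-isomorphism. Granting this, the short exact sequence of complexes
\[
  0 \rightarrow z^l(D) \rightarrow z^{l+r}(X) \rightarrow z^{l+r}(X)/z^l(D) \rightarrow 0
\]
gives a long exact sequence in homology. Replacing the homology of the quotient by \(\CH^{l+r}(Y, n)\) then yields the stated sequence, with the connecting map \(\CH^{l+r}(Y, n) \rightarrow \CH^l(D, n - 1)\) given as follows: take the closure of a cycle on \(Y \times \Delta^n\), apply the boundary, and observe that the result lies on \(D\).

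To prove the quasi-isomorphism I would use Bloch's moving lemma in the form corrected by Levine. The closure in \(X \times \Delta^n\) of a good cycle on \(Y \times \Delta^n\) need not meet the faces properly along \(D\). The first attempt is to show that every cycle on \(Y \times \Delta^n\) with boundary extending nicely can be moved, by an explicit homotopy on \(Y \times \Delta^{n+1}\), to one whose closure is in good position. In the affine case this is done with the translation/blow-up trick: move by a generic linear projection, and cover by \(\Delta^\bullet\)-families of automorphisms ("moving by \(\mathbb{A}^1\)-families"). For general quasi-projective \(X\), reduce to the projective case by compactifying. Then use the \(\mathbb{P}^N\) construction (Bloch's "moving to \(\mathbb{P}^n\)" via the projective cone/ linear projection from a general linear subspace), combined with homotopy invariance \(\CH^*(X \times \mathbb{A}^1, \bullet) \simeq \CH^*(X, \bullet)\).

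I expect this moving lemma, that is, surjectivity on homology and injectivity up to homotopy of the quotient map, to be the main obstacle; everything else is formal. My first choice is to follow Bloch's argument via subdivision of \(\Delta^\bullet\) (barycentric subdivision to localise the cycles), proving that the subcomplex of cycles on \(Y\) which extend in good position to \(X\) is quasi-isomorphic to the whole complex.
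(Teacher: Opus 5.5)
Your formal reduction is correct, and it is the standard shape of the proof of Bloch's localization theorem, which the paper does not prove but quotes from the literature. Pushforward $i_*$ identifies $z^{l}(D,\bullet)$ with the admissible cycles on $X\times\Delta^\bullet$ supported on $D\times\Delta^\bullet$; the codimension shift is harmless since $X$ is a variety and $D$ has pure codimension $r$. The quotient $z^{l+r}(X,\bullet)/z^{l}(D,\bullet)$ is then the subcomplex $z^{l+r}_X(Y,\bullet)\subset z^{l+r}(Y,\bullet)$ of cycles whose closures meet all faces of $X\times\Delta^n$ properly. But all the mathematics is in the claim that $z^{l+r}_X(Y,\bullet)\hookrightarrow z^{l+r}(Y,\bullet)$ is a quasi-isomorphism, and you do not prove it. Your description of the connecting map already presupposes it: for a general $Z\in z^{l+r}(Y,n)$ the closure $\overline{Z}$ is not an element of $z^{l+r}(X,n)$. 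So $Z$ must first be moved into $z^{l+r}_X(Y,n)$ before you take the boundary of its closure.

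The tools you list do not give this step. The obstruction is that $\overline{Z}\cap(X\times F)$ can have components of excess dimension contained in $D\times F$. For faces of codimension one this cannot happen, since $X\times F$ is a Cartier divisor not containing $\overline{Z}$; so the problem sits at faces of codimension $\geqslant 2$.
\begin{itemize}
\item Moving by $\mathbb{A}^1$-families of translations of an ambient affine space preserves neither $Y$ nor $X$. It carries precisely these excess components off $D$ into $Y\times F$, so the homotopy it produces is not admissible on $Y$.
\item The linear-projection (Chow--Roberts) moving lemma moves \emph{admissible} cycles on a smooth ambient into good position with respect to a fixed finite family of closed subsets. It does not apply to $\overline{Z}$, which is not admissible. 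Nor does it apply on $Y$, because good position of the closure along $D$ is not a condition relative to closed subsets of $Y$.
\item Barycentric subdivision cannot localise an algebraic cycle the way it localises singular chains: an irreducible cycle on $Y\times\Delta^n$ cannot be cut into pieces staying away from $D$. The analogy with excision needs a genuinely algebraic substitute, and that is where the real work of the moving lemma lies.
\end{itemize}
So either quote the quasi-isomorphism as a theorem, or supply the moving lemma yourself. If you quote it, your argument is complete at the same level as the paper's citation. The standard reference for the corrected argument is Bloch's own paper on the moving lemma for higher Chow groups (J.~Algebraic Geom.\ 1994). As written, the proof has its gap exactly at this step.
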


\begin{corollary}
  Let \(K\) be any field. Then we have
  \[
    \CH^q(\mathbb{P}^1_K, n) \simeq \CH^q(\Spec(K), n) \oplus \CH^{q - 1}(\Spec(K), n).
  \]
\end{corollary}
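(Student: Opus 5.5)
The plan is to use the localisation sequence for the pair \(\{\infty\} \subset \mathbb{P}^1_K\) together with the projective bundle formula, and the base case of a point. Set \(X = \mathbb{P}^1_K\), \(D = \{\infty\} \simeq \Spec(K)\) (pure codimension \(r = 1\)) and \(Y = \mathbb{A}^1_K\). The localisation theorem stated above gives the exact sequence
\[
  \ldots \rightarrow \CH^{q-1}(\Spec(K), n) \xrightarrow{i_*} \CH^{q}(\mathbb{P}^1_K, n) \xrightarrow{j^*} \CH^{q}(\mathbb{A}^1_K, n) \xrightarrow{\partial} \CH^{q-1}(\Spec(K), n-1) \rightarrow \ldots
\]

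The first step is homotopy invariance, \(\CH^q(\mathbb{A}^1_K, n) \simeq \CH^q(\Spec(K), n)\) via flat pullback along the structure map \(\pi \colon \mathbb{A}^1_K \to \Spec(K)\). I take this as a standard property of Bloch's higher Chow groups (as in the cited introduction). It makes the sequence read
\[
  \CH^{q-1}(K, n) \rightarrow \CH^q(\mathbb{P}^1_K, n) \rightarrow \CH^q(K, n) \rightarrow \CH^{q-1}(K, n-1).
\]

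The second step is to split the sequence. The map \(j^*\) has a section given by flat pullback along \(p \colon \mathbb{P}^1_K \to \Spec(K)\), since \(p \circ j = \pi\) and \(\pi^*\) is an isomorphism. Hence \(j^*\) is surjective in every degree \(n\). It follows that every connecting map \(\partial\) vanishes. Applying this in degree \(n+1\), the map \(i_*\) in degree \(n\) is injective. So we get split short exact sequences
\[
  0 \rightarrow \CH^{q-1}(K, n) \rightarrow \CH^q(\mathbb{P}^1_K, n) \rightarrow \CH^q(K, n) \rightarrow 0,
\]
and the splitting \(p^*\) yields the claimed direct sum decomposition.

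The main obstacle is only to justify the two inputs not explicitly listed earlier in the paper: homotopy invariance for \(\mathbb{A}^1\), and functoriality of flat pullback compatible with the localisation sequence. Both are part of Bloch's foundational results. Alternatively, one can cite the projective bundle formula directly, in which case the splitting is given by \(\alpha + \beta \cdot h\) with \(h\) the hyperplane class.
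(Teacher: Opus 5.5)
Your proof is correct, and it is essentially the route the paper intends: the paper states this corollary immediately after the localisation sequence without further proof. Your argument (localisation for \(\{\infty\} \subset \mathbb{P}^1_K\), homotopy invariance for \(\mathbb{A}^1_K\), and the section \(p^*\) of \(j^*\), which kills every connecting map and makes \(i_*\) injective) supplies exactly the omitted details, with integral coefficients. If you use your alternative instead, cite Bloch's integral projective bundle formula, because the version recorded later in the paper (Theorem~\ref{theorem:projective-bundle}) is stated only after tensoring with \(\mathbb{Q}\).
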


From~\cite[Theorem~9.9]{burgos-gil/regulators} and~\cite[Theorem~5.4.3]{elbaz-vincent/higher-chow} we obtain

\begin{theorem}
  Let \(K\) be a number field. Then we have \(\CH^q(\Spec(K), 2q - p)_{\mathbb{Q}} = 0\) unless \(p = q = 0\) or \(p = 1\). For \(p = 1\) and \([K \colon \mathbb{Q}] = r_1 + 2 r_2\), we have
    \[
      \CH^q(\Spec(K), 2 q - 1)_{\mathbb{Q}} \simeq
      \begin{cases}
        K^*_{\mathbb{Q}}, & q = 1, \\
        \mathbb{Q}^{r_2}, & q \geqslant 2 \text{ even;} \\
        \mathbb{Q}^{r_1 + r_2}, & q \geqslant 3 \text{ odd.}
      \end{cases}
    \]
\end{theorem}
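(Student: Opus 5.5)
The plan is to reduce the statement to Borel's computation of the rational $K$-theory of number fields, using the comparison between higher Chow groups and Adams eigenspaces of algebraic $K$-theory. Concretely, Bloch's theorem (in the form proved via the Riemann--Roch without denominators argument, as in \cite{burgos-gil/regulators,elbaz-vincent/higher-chow}) gives, for a field $K$,
\[
  \CH^q(\Spec(K), m)_{\mathbb{Q}} \simeq K_m(K)_{\mathbb{Q}}^{(q)},
\]
the weight-$q$ eigenspace of the Adams operations. So we must show that $K_{2q-p}(K)^{(q)}_{\mathbb{Q}}$ vanishes outside the listed cases, and compute it for $p = 1$.

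First, the trivial vanishings. If $m = 2q - p < 0$ there is nothing to prove. The group $\CH^q(\Spec(K), m)$ is built from codimension-$q$ cycles on $\Delta^m_K$, which has dimension $m$, so it vanishes for $q > m$, that is, for $p > q$. The weight-$0$ part is nonzero only for $m = 0$, which gives the case $p = q = 0$ with value $\mathbb{Q}$. For $q = 1$ we have $\CH^1(\Spec(K), 1) \simeq K^*$ by the standard computation (Bloch; Nesterenko--Suslin/Totaro), which is the $q = 1$ case of the formula. For $q = 1$ and $m = 0$ (i.e. $p = 2$) the group is $\Pic(\Spec(K)) = 0$.

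Next, the case $q \geqslant 2$. I would use Quillen's localisation sequence
\[
  \cdots \to K_m(\mathcal{O}_K) \to K_m(K) \to \bigoplus_{\mathfrak{p}} K_{m-1}(\mathcal{O}_K/\mathfrak{p}) \to \cdots
\]
together with Quillen's theorem that the higher $K$-groups of finite fields are finite. This gives $K_m(K)_{\mathbb{Q}} \simeq K_m(\mathcal{O}_K)_{\mathbb{Q}}$ for $m \geqslant 2$. For these groups Borel's theorem applies:
\begin{itemize}
\item $K_m(\mathcal{O}_K)_{\mathbb{Q}} = 0$ for even $m \geqslant 2$;
\item $K_m(\mathcal{O}_K)_{\mathbb{Q}}$ has rank $r_2$ for $m \equiv 3 \pmod 4$;
\item $K_m(\mathcal{O}_K)_{\mathbb{Q}}$ has rank $r_1 + r_2$ for $m \equiv 1 \pmod 4$, $m \geqslant 5$.
\end{itemize}
To assign weights, I would invoke Beilinson's comparison of the Borel regulator with his regulator (or Soulé's argument via Adams operations on $K_{2q-1}$). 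This shows that $K_{2q-1}(K)_{\mathbb{Q}}$ is concentrated in Adams weight $q$. Hence for $q \geqslant 2$ we get $\CH^q(\Spec(K), 2q-1)_{\mathbb{Q}}$ of rank $r_2$ when $q$ is even (since $2q - 1 \equiv 3 \pmod 4$) and of rank $r_1 + r_2$ when $q$ is odd (since $2q - 1 \equiv 1 \pmod 4$). All other weight pieces of odd-degree groups vanish, which covers odd $p \neq 1$. Even-degree groups (even $p$, $m \geqslant 2$) vanish entirely. The remaining small case is $m = 1$ with $q \geqslant 2$, i.e. $p = 2q - 1 > q$, which is already covered by the dimension vanishing.

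The main obstacle is the weight statement: Borel's theorem gives only total ranks, and pinning $K_{2q-1}(K)_{\mathbb{Q}}$ into the single Adams eigenspace of weight $q$ is a deep input. This is exactly what \cite[Theorem~9.9]{burgos-gil/regulators} packages, so I would cite it rather than reprove it, and only verify the bookkeeping of indices $m = 2q - p$ against Borel's list.
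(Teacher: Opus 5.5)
Your proposal is correct and follows the paper's route. The paper gets the statement directly from \cite[Theorem~5.4.3]{elbaz-vincent/higher-chow} and \cite[Theorem~9.9]{burgos-gil/regulators}, i.e.\ from the identification of rational higher Chow groups of $\Spec(K)$ with Adams eigenspaces of $K_*(K)_{\mathbb{Q}}$, together with Borel's ranks and the purity of $K_{2q-1}(K)_{\mathbb{Q}}$ in weight $q$; your bookkeeping (vanishing for $p > q$ by dimension, the cases $q = 0, 1$, the localisation sequence from $\mathcal{O}_K$ to $K$, and the index check $2q-1 \bmod 4$) just makes explicit what the paper leaves to these citations.
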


\begin{proposition}[see~{\cite[Example~5.1.8]{elbaz-vincent/higher-chow}}]\label{proposition:base-change}
  Let \(L \supset K\) be a finite field extension, and \(X\) be a quasi-projective variety over \(K\). Then \(\CH^q(X, n)_{\mathbb{Q}} \hookrightarrow \CH^q(X_L, n)_{\mathbb{Q}}\).
\end{proposition}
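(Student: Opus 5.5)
The plan is the standard transfer argument: show that the composite of flat pullback and proper pushforward along \(\pi \colon X_L = X \times_{\Spec K} \Spec L \rightarrow X\) is multiplication by \([L \colon K]\) on \(\CH^q(X, n)\). This integer becomes invertible after tensoring with \(\mathbb{Q}\), so \(\pi^*\) is injective on \(\CH^q(X,n)_{\mathbb{Q}}\).

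\emph{The maps on cycle complexes.} Recall that \(\CH^q(X, n)\) is the \(n\)-th homology of Bloch's cycle complex \(z^q(X, \bullet)\). Here \(z^q(X, m)\) is the free abelian group on codimension \(q\) subvarieties of \(X \times \Delta^m\) meeting all faces properly, and the differential is the alternating sum of restrictions to faces. The morphism \(\pi\) is finite and flat, being a base change of \(\Spec L \rightarrow \Spec K\). Hence \(\pi \times \mathrm{id}_{\Delta^m}\) is finite and flat as well.
\begin{enumerate}
\item Flat pullback \(Z \mapsto [Z \times_K L]\), taken with multiplicities, sends cycles meeting faces properly to cycles meeting faces properly, since preimages of faces are faces and flat pullback preserves codimension.
\item Proper pushforward \(W \mapsto \deg(W/\pi(W)) \, [\pi(W)]\) preserves codimension because \(\pi\) is finite. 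It also preserves proper intersection with faces, since \(\pi(W) \cap (X \times F) = \pi(W \cap (X_L \times F))\).
\end{enumerate}
Both operations commute with restriction to faces: for pullback by flatness, and for pushforward by the projection formula / base change for a finite morphism restricted to the Cartier divisors \(X \times \partial_i \Delta^m\). Therefore both are maps of complexes and induce \(\pi^*\) and \(\pi_*\) on \(\CH^q(-, n)\). These are the standard flat pullback and finite pushforward in Bloch's theory, so I would cite them rather than reprove them.

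\emph{The key identity.} Take an irreducible \(Z \subset X \times \Delta^m\). Then \(\pi^{-1}(Z) = Z \times_K L\) is finite flat over \(Z\) of degree \([L \colon K]\). Summing the degrees of its components times their multiplicities therefore gives
\[
  \pi_* \pi^* [Z] = [L \colon K] \, [Z].
\]
This is the step needing the most care, because \(Z \times_K L\) may be non-reduced when \(L/K\) is inseparable. However, the multiplicity-weighted total degree of a finite flat morphism equals the rank of the pushed-forward structure sheaf, which is \([L \colon K]\) generically on \(Z\). So the identity holds in general. It holds already at the level of complexes, hence on homology.

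\emph{Conclusion.} We get \(\pi_* \circ \pi^* = [L \colon K] \cdot \mathrm{id}\) on \(\CH^q(X, n)\). After tensoring with \(\mathbb{Q}\), the map \([L \colon K]^{-1} \pi_*\) is a left inverse of \(\pi^*\), so \(\pi^*\) is injective. I do not expect a real obstacle. The only technical inputs are the existence and functoriality of flat pullback and finite pushforward on the cycle complexes, and the degree computation in the inseparable case. When \(L/K\) is Galois one can moreover identify the image with the \(\mathrm{Gal}(L/K)\)-invariants, but this is not needed here.
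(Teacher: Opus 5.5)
Your proposal is correct and is essentially the argument behind the result: the paper gives no proof of its own and simply cites \cite[Example~5.1.8]{elbaz-vincent/higher-chow}. That reference rests on the standard transfer identity \(\pi_* \pi^* = [L \colon K] \cdot \mathrm{id}\) on Bloch's cycle complexes for the finite flat map \(\pi \colon X_L \rightarrow X\), and your multiplicity-weighted degree count correctly handles the possibly non-reduced, inseparable case.
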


From this, the localisation sequence, and~\cite[Corollary~5.1.11]{elbaz-vincent/higher-chow} we obtain

\begin{corollary}\label{corollary:removing-points}
  Let \(K\) be a number field, \(X\) be a quasi-projective variety over \(K\), \(D \subset X\) be a subvariety of codimension \(m = \dim(X)\), and \(Y = X \setminus D\). Then
  \[
    \CH^{l + m}(Y, n)_{\mathbb{Q}} \simeq \CH^{l + m}(X, n)_{\mathbb{Q}}, \quad
    n \neq 2 l - 1, 2 l, \quad l > 0.
  \]
\end{corollary}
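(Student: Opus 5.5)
We have to prove Corollary~\ref{corollary:removing-points}. The plan is to apply the localisation sequence to the pair \(D \subset X\) with \(r = m = \dim(X)\). This gives the exact sequence
\[
  \CH^{l}(D, n)_{\mathbb{Q}} \rightarrow \CH^{l + m}(X, n)_{\mathbb{Q}} \rightarrow \CH^{l + m}(Y, n)_{\mathbb{Q}} \rightarrow \CH^{l}(D, n - 1)_{\mathbb{Q}},
\]
and it suffices to show that both outer groups vanish for \(l > 0\) and \(n \neq 2l - 1, 2l\).

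Since \(D\) has codimension \(\dim(X)\), it is zero-dimensional, i.e., a finite set of closed points \(\Spec(K_j)\) with each \(K_j\) a finite extension of \(K\). Hence the reduced structure of \(D\) is a disjoint union of spectra of number fields. Higher Chow groups are insensitive to nilpotents, being defined via cycles on the underlying reduced scheme, and they are additive over disjoint unions. Therefore
\[
  \CH^{l}(D, n)_{\mathbb{Q}} \simeq \bigoplus_j \CH^l(\Spec(K_j), n)_{\mathbb{Q}}.
\]
(If the reference \cite[Corollary~5.1.11]{elbaz-vincent/higher-chow} is the statement that higher Chow groups of a point vanish rationally outside a range, it can be invoked here directly. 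Proposition~\ref{proposition:base-change} is only needed if one prefers to pass to a common Galois extension \(L\) containing all the \(K_j\) and argue there.)

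Next apply the theorem on higher Chow groups of number fields to each \(K_j\). Write \(n = 2l - p\). Then \(\CH^l(\Spec(K_j), n)_{\mathbb{Q}} = 0\) unless \(p = 1\), since the case \(p = q = 0\) is excluded by \(l > 0\). So the left group vanishes unless \(n = 2l - 1\). The right group \(\CH^l(D, n - 1)_{\mathbb{Q}}\) vanishes unless \(n - 1 = 2l - 1\), that is, unless \(n = 2l\). Outside these two values both outer terms are zero, and the middle map is therefore an isomorphism.

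The only delicate point is the reduction from \(D\) to a disjoint union of number fields: one needs nil-invariance and additivity of \(\CH^\bullet(-, n)\). Both follow from the definition via Bloch's cycle complex. No other step is an obstacle.
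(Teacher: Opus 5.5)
Your proposal is correct and follows essentially the paper's route. You apply the localisation sequence to \(D \subset X\), where \(D\) is a finite disjoint union of spectra of number fields, and use the rational vanishing of \(\CH^l(\Spec K_j, n)_{\mathbb{Q}}\) for \(l > 0\) and \(n \neq 2l - 1\) to kill both flanking terms when \(n \neq 2l-1, 2l\). The paper also invokes the base-change injection (Proposition~\ref{proposition:base-change}). As you observe, applying the number-field vanishing theorem directly to each residue field \(K_j\) makes that step unnecessary.
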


\begin{corollary}\label{corollary:affine-vanishing}
  Let \(K\) be a number field, \(X = \mathbb{A}^m_K\), \(D \subset X\) be a (totally reducible) subvariety of codimension \(m = \dim(X)\), and \(Y = X \setminus D\). Then we have \(\CH^{l + m}(Y, n)_{\mathbb{Q}} = 0\) for \(n \neq 2 l, 2 (l + m) - 1\). Now assume also that \(K\) is totally real: if \(n = 2 (l + m) - 1\), and \(l + m\) is even, or \(n = 2 l\), and \(l\) is even, then we also have \(\CH^{l + m}(Y, n)_{\mathbb{Q}} = 0\).
\end{corollary}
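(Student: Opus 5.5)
The plan is to compute \(\CH^{l+m}(Y,n)_{\mathbb{Q}}\) from the localisation sequence for the pair \(D \subset X = \mathbb{A}^m_K\), and then to evaluate its two neighbouring terms with the vanishing theorem for number fields quoted above (\cite[Theorem~9.9]{burgos-gil/regulators}, \cite[Theorem~5.4.3]{elbaz-vincent/higher-chow}). As in Corollary~\ref{corollary:removing-points}, I take \(l > 0\). This is not merely a convention. For \(l = 0\) and \(m \geqslant 2\) the sequence below gives \(\CH^m(Y,1)_{\mathbb{Q}} \simeq \mathbb{Q}^{\#D}\), which is nonzero although \(n = 1 \neq 0, 2m-1\); so the statement as written fails for \(l = 0\), and the hypothesis \(l > 0\) is needed.

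The relevant part of the localisation sequence is
\[
  \CH^{l+m}(\mathbb{A}^m_K, n)_{\mathbb{Q}} \rightarrow \CH^{l+m}(Y, n)_{\mathbb{Q}} \rightarrow \CH^{l}(D, n-1)_{\mathbb{Q}} ,
\]
so \(\CH^{l+m}(Y,n)_{\mathbb{Q}}\) vanishes as soon as both outer terms vanish. For the left term, homotopy invariance of higher Chow groups gives \(\CH^{l+m}(\mathbb{A}^m_K,n) \simeq \CH^{l+m}(\Spec K, n)\). With \(q = l+m\) and \(p = 2q-n\), the vanishing theorem kills this group rationally unless \(p = q = 0\), which is excluded since \(q > 0\), or \(p = 1\), that is \(n = 2(l+m)-1\). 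For the right term, \(D\) has codimension \(m = \dim X\) and is totally reducible, so it is a disjoint union of \(K\)-rational points. Hence \(\CH^l(D,n-1) \simeq \bigoplus_{x \in D} \CH^l(\Spec K, n-1)\). Here \(q = l > 0\) and \(p = 2l-(n-1)\), so this group vanishes unless \(n = 2l\). Combining the two, \(\CH^{l+m}(Y,n)_{\mathbb{Q}} = 0\) for \(n \neq 2l, 2(l+m)-1\).

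For the totally real refinement, \(r_2 = 0\), so \(\CH^q(\Spec K, 2q-1)_{\mathbb{Q}} \simeq \mathbb{Q}^{r_2} = 0\) for even \(q \geqslant 2\). Consider first \(n = 2(l+m)-1\) with \(l+m\) even. Since \(l+m \geqslant 2\), the left term \(\CH^{l+m}(\Spec K, 2(l+m)-1)_{\mathbb{Q}}\) vanishes. The right term vanishes because \(n \neq 2l\), by parity. Next consider \(n = 2l\) with \(l\) even, so \(l \geqslant 2\). The right term is a sum of copies of \(\CH^l(\Spec K, 2l-1)_{\mathbb{Q}} = 0\). The left term vanishes because \(2l \neq 2(l+m)-1\), again by parity. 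In both cases the exactness of the sequence gives the claimed vanishing.

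There is no serious obstacle. The points needing care are the bookkeeping of which pairs \((q,p)\) survive, and the use of total reducibility. The latter ensures that every residue field of \(D\) is \(K\) itself, and hence totally real when \(K\) is. Without it one would need Proposition~\ref{proposition:base-change}, and a residue field that is a finite extension of \(K\) need not be totally real.
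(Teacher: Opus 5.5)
Your proof is correct and follows the route the paper indicates. The paper gives no written proof; it derives this corollary from the localisation sequence for \(D \subset \mathbb{A}^m_K\), homotopy invariance (its citation of \cite[Corollary~5.1.11]{elbaz-vincent/higher-chow}) and the Borel-type vanishing for number fields, with total reducibility doing the work it assigns to Proposition~\ref{proposition:base-change}. Your remark that \(l > 0\) must be assumed is also right, since for \(l = 0\), \(m \geqslant 2\), \(n = 1\) one gets \(\CH^m(Y,1)_{\mathbb{Q}} \simeq \mathbb{Q}^{\#D} \neq 0\); this does not affect the paper, whose applications only need \(l \geqslant 1\).
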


\begin{corollary}\label{corollary:projective-vanishing}
  Let \(K\) be a number field. If \(n \neq 2 q - 1, 2 q - 3\), \(q > 1\), then \(\CH^q(\mathbb{P}^1_K, n)_{\mathbb{Q}} = 0\). Now assume also that \(K\) is totally real: if \(n = 2 q - 1\), and \(q\) is even, or \(n = 2 q - 3\), and \(q\) is odd, then we also have \(\CH^q(\mathbb{P}^1_K, n)_{\mathbb{Q}} = 0\).
\end{corollary}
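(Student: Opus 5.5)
The plan is to apply the projective bundle formula for \(\mathbb{P}^1_K\) from the preceding corollary, which gives
\[
  \CH^q(\mathbb{P}^1_K, n)_{\mathbb{Q}} \simeq \CH^q(\Spec(K), n)_{\mathbb{Q}} \oplus \CH^{q-1}(\Spec(K), n)_{\mathbb{Q}},
\]
and then to control each summand separately with the theorem on higher Chow groups of number fields (\cite[Theorem~9.9]{burgos-gil/regulators}, \cite[Theorem~5.4.3]{elbaz-vincent/higher-chow}). Throughout we use that \(\CH^q(\Spec(K), n) = 0\) for \(n < 0\), so negative values of \(p\) below cause no trouble.

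For the first summand, write \(n = 2q - p\). By the theorem, \(\CH^q(\Spec(K), 2q - p)_{\mathbb{Q}}\) vanishes unless \(p = q = 0\) or \(p = 1\). The case \(p = q = 0\) is excluded because \(q > 1\). Hence this summand can be nonzero only if \(n = 2q - 1\).

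For the second summand, write \(n = 2(q-1) - p'\). Now \(\CH^{q-1}(\Spec(K), n)_{\mathbb{Q}}\) vanishes unless \(p' = q - 1 = 0\) or \(p' = 1\). The first alternative would force \(q = 1\), which is excluded. Hence this summand can be nonzero only if \(n = 2q - 3\). Together these two observations prove the first assertion.

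For the totally real refinement, take \(r_2 = 0\) in the theorem. Suppose \(n = 2q - 1\) with \(q\) even. The first summand is \(\CH^q(\Spec(K), 2q-1)_{\mathbb{Q}} \simeq \mathbb{Q}^{r_2} = 0\), since \(q \geqslant 2\) is even. The second summand vanishes because \(n = 2q - 1 \neq 2q - 3\).

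Now suppose \(n = 2q - 3\) with \(q\) odd. Since \(q > 1\), we have \(q \geqslant 3\), so \(q - 1 \geqslant 2\) is even. Then the second summand is \(\CH^{q-1}(\Spec(K), 2(q-1)-1)_{\mathbb{Q}} \simeq \mathbb{Q}^{r_2} = 0\). The first summand vanishes because \(n = 2q - 3 \neq 2q - 1\).

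The only point needing care, rather than a real obstacle, is the boundary case \(q - 1 = 1\), where the group \(K^*_{\mathbb{Q}}\) appears and does not vanish. It corresponds to \(q = 2\), which is even, so the parity hypothesis \(q\) odd in the \(n = 2q - 3\) case excludes it, and the statement stays consistent.
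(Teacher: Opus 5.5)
Your proof is correct and follows the route the paper intends. The paper states this corollary as an immediate consequence of the splitting \(\CH^q(\mathbb{P}^1_K,n)_{\mathbb{Q}}\simeq\CH^q(\Spec(K),n)_{\mathbb{Q}}\oplus\CH^{q-1}(\Spec(K),n)_{\mathbb{Q}}\) together with the number-field theorem, and in the totally real case \(r_2=0\) kills \(\CH^m(\Spec(K),2m-1)_{\mathbb{Q}}\simeq\mathbb{Q}^{r_2}\) for even \(m\geqslant 2\).

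One cosmetic slip: negative \(p\) corresponds to \(n>2q\), which the theorem already covers. The convention \(\CH^q(\Spec(K),n)=0\) for \(n<0\) concerns \(p>2q\) instead.
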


\begin{theorem}[{\cite[Corollary~5.4]{levine/bloch}}]\label{theorem:projective-bundle}
  Let \(E \rightarrow X\) be a vector bundle of rank \(r + 1\) over a smooth quasi-projective variety \(X\), and let \(\pi \colon \mathbb{P}(E) \rightarrow X\) be the associated projective space bundle. Let \(\zeta\) be the class of \(\mathcal{O}(1)\) in \(\CH^1(\mathbb{P}(E))\). Then the maps
  \[
    \alpha_i \colon \CH^{q - i}(X, \bullet)_{\mathbb{Q}} \rightarrow \CH^q(\mathbb{P}(E), \bullet)_{\mathbb{Q}}, \quad
    \alpha_i(\eta) = \pi^*(\eta) \cup \zeta^i, \quad i = 0, \ldots, r,
  \]
  define an isomorphism for each \(p\) and each \(q\):
  \[
    \sum_{i = 0}^r \alpha_i \colon \bigoplus_{i = 0}^r
    \CH^{q - i}(X, p)_{\mathbb{Q}} \xrightarrow{\sim} \CH^q(\mathbb{P}(E), p)_{\mathbb{Q}}.
  \]
\end{theorem}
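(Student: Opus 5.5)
The plan is to prove the isomorphism by noetherian induction on \(X\). Each step uses the localisation sequence for higher Chow groups and the five lemma; the base case is a trivial bundle, handled by homotopy invariance. Everything is done with \(\mathbb{Q}\)-coefficients, although the argument should work integrally. Throughout, write \(\Phi_X = \sum_{i=0}^r \alpha_i\) for the map in the statement.

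\emph{Step 1: the trivial bundle.} Let \(E = \mathcal{O}_X^{r+1}\), so that \(\mathbb{P}(E) = X \times \mathbb{P}^r\). I argue by induction on \(r\), the case \(r = 0\) being trivial. Take a hyperplane \(X \times \mathbb{P}^{r-1} \subset X \times \mathbb{P}^r\), whose complement is \(X \times \mathbb{A}^r\). The localisation sequence (the theorem on localisation in Subsection~\ref{subsection:chow}) gives
\[
  \cdots \to \CH^{q-1}(X \times \mathbb{P}^{r-1}, p) \to \CH^q(X \times \mathbb{P}^r, p) \to \CH^q(X \times \mathbb{A}^r, p) \to \CH^{q-1}(X \times \mathbb{P}^{r-1}, p-1) \to \cdots
\]
By homotopy invariance, \(\CH^q(X \times \mathbb{A}^r, p) \simeq \CH^q(X, p)\), and this is hit by \(\alpha_0 = \pi^*\) followed by restriction. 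Hence the restriction map is split surjective, and the boundary maps vanish. The pushforward from the hyperplane turns \(\zeta^{i}\) on \(X \times \mathbb{P}^{r-1}\) into \(\zeta^{i+1}\) on \(X \times \mathbb{P}^r\), because \(\zeta\) is the class of the hyperplane. Combining this with the induction hypothesis identifies the first term with \(\bigoplus_{i=1}^{r} \CH^{q-i}(X, p)\) via \(\alpha_1, \dots, \alpha_r\). The short exact sequence then gives the claim.

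\emph{Step 2: noetherian induction.} For general \(E\), choose a dense open \(U \subset X\) over which \(E\) is trivial, and put \(Z = X \setminus U\). Compare the localisation sequence
\[
  \CH^{\bullet}(\mathbb{P}(E|_Z), p) \to \CH^{\bullet}(\mathbb{P}(E), p) \to \CH^{\bullet}(\mathbb{P}(E|_U), p) \to \CH^{\bullet}(\mathbb{P}(E|_Z), p-1) \to \cdots
\]
with the direct sum over \(i\) of the localisation sequences for \(Z \subset X \supset U\), shifted by \(i\). The maps between the two ladders are \(\Phi_Z\), \(\Phi_X\) and \(\Phi_U\). By Step 1, \(\Phi_U\) is an isomorphism; by noetherian induction, \(\Phi_Z\) is an isomorphism. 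If the diagram commutes, the five lemma gives that \(\Phi_X\) is an isomorphism.

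\emph{Main obstacle.} The delicate point is making the two ladders commute, and this is tied up with how \(\alpha_i\) is even defined on the singular closed subset \(Z\). On singular \(Z\) there is no cup product, so \(\alpha_i\) must be interpreted homologically.

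\begin{itemize}
\item Pullback \(\pi^*\) is flat, hence defined on cycles, and commutes with both the restriction to open subsets and the localisation boundary map.
\item For \(\zeta^i\), I would first replace the cup product by a Gysin-type operation. Locally, \(\zeta^i \cap \pi^*(-)\) equals pushforward from a linear subbundle \(\mathbb{P}(E'') \subset \mathbb{P}(E)\) of codimension \(i\).
\item Globally, I would realise \(\zeta\) as the first Chern class of \(\mathcal{O}(1)\) acting through intersection with a Cartier divisor in good position. This requires a moving lemma (Bloch's) to put the divisor in general position with respect to the higher cycles. Once that is available, the operation commutes with proper pushforward and with the boundary map by its cycle-level construction.
\item A simpler fallback, if the moving argument is too heavy, is to pass to a filtration \(E_0 \subset \cdots \subset E\) after a finite stratification. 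By Proposition~\ref{proposition:base-change}, the \(\mathbb{Q}\)-coefficients tolerate finite base extensions, and this reduces to iterating Step 1.
\end{itemize}
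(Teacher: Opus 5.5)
The paper gives no argument of its own here; it quotes the result from Levine. Your architecture is the standard one: localisation plus noetherian induction down to a trivial bundle, with the trivial case done by the hyperplane/affine-space decomposition. Step~1 is correct as written for smooth \(X\). Restriction to \(X\times\mathbb{A}^r\) is split surjective by homotopy invariance, so the sequences are short exact. With \(j\colon X\times\mathbb{P}^{r-1}\hookrightarrow X\times\mathbb{P}^r\) and \(\pi'\) the projection, the projection formula gives \(j_*(\pi'^*\eta\cup\zeta'^i)=\pi^*\eta\cup\zeta^{i+1}\).

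The genuine gap is the sentence ``if the diagram commutes'' in Step~2. You rightly flag it as the main obstacle, but you do not resolve it. Noetherian induction forces you to prove a strictly stronger statement, for arbitrary closed subschemes \(Z\subset X\). These are singular and need not be equidimensional, so you must use dimension-indexed groups \(\CH_*(Z,p)\). You then need an operation \(\zeta\cap(-)\) on \(\CH_*(\mathbb{P}(E|_Z),p)\) that:
(i) is defined without smoothness;
(ii) commutes with restriction to \(U\), with proper pushforward along \(Z\hookrightarrow X\), and, crucially, with the localisation boundary \(\partial\);
(iii) agrees with cup product by \(\zeta\) on smooth schemes. This is needed so that Step~1 applies on a smooth trivialising open of each stratum, and so that you recover the theorem as stated.
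None of this is constructed. Once localisation and homotopy invariance are granted, it is essentially the whole content of the theorem.

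Your three remedies do not supply it.
\begin{itemize}
\item The local description of \(\zeta^i\cap\pi^*(-)\) as pushforward from a linear subbundle depends on choices. It does not glue to a global map compatible with \(\partial\).
\item The moving lemma you invoke puts cycles in good position on smooth varieties. On the singular \(\mathbb{P}(E|_Z)\) you would instead need a quasi-isomorphism statement for the subcomplex of cycles meeting a fixed divisor properly, plus independence of the divisor. You only assert these.
\item The fallback does not work. Vector bundles are already Zariski-locally trivial, so filtrations and finite extensions address a non-problem. Proposition~\ref{proposition:base-change} is an injectivity statement and does not by itself descend an isomorphism from \(L\) to \(K\). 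Stratifying further still leaves the compatibility with \(\partial\) between strata.
\end{itemize}

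One clean way to close the gap is to define \(c_1(\mathcal{L})\cap a:=(p^*)^{-1}s_*a\), where \(s\) and \(p\) are the zero section and projection of the total space of \(\mathcal{L}\). This uses only proper pushforward, flat pullback and homotopy invariance for line bundles. That homotopy invariance is itself proved by the same noetherian induction, which is harmless because \(p^*\) is a flat pullback defined on cycles of any scheme. Hence the operation commutes with \(\partial\) on the nose. You would still have to check \(j_*\pi'^*\eta=\zeta\cap\pi^*\eta\) in Step~1, and property (iii), for this definition.

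Alternatively, you can avoid singular strata altogether by a Mayer--Vietoris induction over a finite trivialising open cover of \(X\). The price is proving that the Mayer--Vietoris boundary is \(\CH^*(\mathbb{P}(E))\)-linear.
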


\begin{corollary}\label{corollary:hirzebruch-vanishing}
  Let \(K\) be a totally real number field, and \(X\) be a Hirzebruch surface defined over \(K\). Then we have \(\CH^3(X, 3)_{\mathbb{Q}} = 0\).
\end{corollary}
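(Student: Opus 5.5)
The statement is Corollary~\ref{corollary:hirzebruch-vanishing}: for a Hirzebruch surface \(X\) over a totally real number field \(K\), we want \(\CH^3(X,3)_{\mathbb{Q}} = 0\). The plan is to reduce to higher Chow groups of \(\mathbb{P}^1_K\) using the projective bundle formula, and then apply the vanishing results for number fields.

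First, write \(X = \mathbb{F}_a = \mathbb{P}(E)\), where \(E = \mathcal{O}_{\mathbb{P}^1_K} \oplus \mathcal{O}_{\mathbb{P}^1_K}(a)\) is a rank \(2\) vector bundle over the smooth base \(\mathbb{P}^1_K\). Here \(r = 1\). One point needs a short remark: the corollary only says \(X\) is a Hirzebruch surface over \(K\). If this means a form of \(\mathbb{F}_a\) rather than the split one, I would pass to a finite extension \(L\). By Proposition~\ref{proposition:base-change}, \(\CH^3(X,3)_{\mathbb{Q}} \hookrightarrow \CH^3(X_L,3)_{\mathbb{Q}}\), so it is enough to prove vanishing over \(L\). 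To stay in the totally real setting, I would take \(L\) totally real, for example by noting that the \(\mathbb{P}^1\)-bundle structure over the base conic splits over a real-closed-type extension. Alternatively, I would simply read the hypothesis as \(X = \mathbb{P}(E)\) over \(K\), which is the intended meaning.

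Next, Theorem~\ref{theorem:projective-bundle} gives
\[
  \CH^3(X,3)_{\mathbb{Q}} \simeq \CH^3(\mathbb{P}^1_K,3)_{\mathbb{Q}} \oplus \CH^2(\mathbb{P}^1_K,3)_{\mathbb{Q}}.
\]
I would now show both summands vanish using Corollary~\ref{corollary:projective-vanishing}:
\begin{itemize}
\item For \(q=3\), \(n=3\): we have \(2q-1 = 5\) and \(2q-3 = 3\), so \(n = 2q-3\). Since \(q = 3\) is odd and \(K\) is totally real, the second clause of the corollary gives vanishing.
\item For \(q=2\), \(n=3\): we have \(2q-1 = 3\), so \(n = 2q-1\). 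Since \(q = 2\) is even and \(K\) is totally real, the corollary again gives vanishing.
\end{itemize}
Hence \(\CH^3(X,3)_{\mathbb{Q}} = 0\).

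The only step needing care is the first one: making sure \(X\) really is a projective bundle over \(\mathbb{P}^1_K\) with \(K\) totally real, rather than over a pointless conic. If necessary, I would handle this with Proposition~\ref{proposition:base-change} together with a totally real splitting field. A pointless conic over \(K\) already acquires a point over some totally real quadratic extension, as long as it has real points at every real place; for real-pointless conics one uses instead the conic's own Chow groups. The rest is direct bookkeeping of indices.
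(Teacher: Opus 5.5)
Your proof is correct and is exactly the argument the paper intends (the corollary is stated right after Theorem~\ref{theorem:projective-bundle} with no separate proof): the projective bundle formula for \(X = \mathbb{P}(\mathcal{O} \oplus \mathcal{O}(a))\) over \(\mathbb{P}^1_K\) gives \(\CH^3(X,3)_{\mathbb{Q}} \simeq \CH^3(\mathbb{P}^1_K,3)_{\mathbb{Q}} \oplus \CH^2(\mathbb{P}^1_K,3)_{\mathbb{Q}}\), both summands vanish by the totally real clauses of Corollary~\ref{corollary:projective-vanishing}, and your index bookkeeping is right. The paper only means split Hirzebruch surfaces, so your digression on twisted forms is unnecessary; the ``real-closed-type extension'' remark is also imprecise, since Proposition~\ref{proposition:base-change} requires a finite extension, but this does not affect the argument.
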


\begin{theorem}[{\cite[Theorem~9.1]{mazza/motivic}}]
  Let \(X\) be a smooth separated scheme over a perfect field. Then for all \(n\) and \(i \geqslant 0\) there is a natural isomorphism \(\CH^i(X, 2i - n) \simeq H^n_{\mathcal{M}}(X, \mathbb{Z}(i))\), where \(H^n_{\mathcal{M}}(X, \mathbb{Z}(i))\) are motivic cohomology groups.
\end{theorem}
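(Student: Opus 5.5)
This is the comparison theorem of Mazza--Voevodsky--Weibel, and the plan is to compare both sides with a common intermediate cycle complex. Put \(\mathbb{Z}(i) = C_*\mathbb{Z}_{tr}(\mathbb{G}_m^{\wedge i})[-i]\) as a complex of Nisnevich sheaves with transfers, so that \(H^n_{\mathcal{M}}(X, \mathbb{Z}(i)) = \mathbb{H}^n_{Nis}(X, \mathbb{Z}(i))\). On the Chow side, Bloch's groups are \(\CH^i(X, 2i - n) = H_{2i - n}(z^i(X \times \Delta^\bullet))\), where \(z^i\) denotes cycles of codimension \(i\) meeting all faces properly.

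First, I would identify \(\mathbb{Z}(i)[2i]\) with the Suslin complex of equidimensional cycles \(C_* z_{equi}(\mathbb{A}^i, 0)\). For this I would use \(\mathbb{Z}_{tr}(\mathbb{P}^i)/\mathbb{Z}_{tr}(\mathbb{P}^{i-1}) \simeq \mathbb{Z}(i)[2i]\) together with homotopy invariance of \(C_*\) of homotopy-invariant presheaves with transfers, which holds over a perfect field. Evaluating on \(X \times \Delta^\bullet\) then gives a natural map
\[
  C_* z_{equi}(\mathbb{A}^i, 0)(X) = z_{equi}(X \times \mathbb{A}^i / X, 0)(\Delta^\bullet) \hookrightarrow z^i(X \times \mathbb{A}^i \times \Delta^\bullet).
\]
By homotopy invariance of higher Chow groups, the target computes \(\CH^i(X \times \mathbb{A}^i, \bullet) \simeq \CH^i(X, \bullet)\).

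Second, I would prove that this inclusion is a quasi-isomorphism for smooth affine \(X\). I would try Suslin's moving lemma first: any cycle in \(z^i(X \times \mathbb{A}^i \times \Delta^\bullet)\) can be moved by a homotopy to one equidimensional over \(X \times \Delta^\bullet\), via generic linear projections and the Friedlander--Lawson style moving argument. I expect this step to be the main obstacle, since it carries essentially all the geometric content; the remaining steps are formal.

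Third, I would globalise. Bloch's localisation sequence (the theorem recalled above) gives Zariski Mayer--Vietoris for \(X \mapsto z^i(X \times \Delta^\bullet)\). The Nisnevich and Zariski hypercohomology of \(\mathbb{Z}(i)\) agree for homotopy-invariant sheaves with transfers. Comparing the two Mayer--Vietoris descent spectral sequences on an affine cover of the smooth separated \(X\) then extends the affine quasi-isomorphism to all such \(X\), and reindexing gives \(\CH^i(X, 2i - n) \simeq H^n_{\mathcal{M}}(X, \mathbb{Z}(i))\). Naturality holds because every map above is functorial for flat pullback, and in fact for all morphisms in \(\mathrm{DM}\).
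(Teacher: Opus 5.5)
The paper gives no proof of this statement. It is quoted from Mazza--Voevodsky--Weibel (where it is Theorem~19.1), so the only benchmark is the standard argument there. Your architecture is the right one: model \(\mathbb{Z}(i)[2i]\) by \(C_* z_{equi}(\mathbb{A}^i, 0)\) as complexes of sheaves over a perfect field, compare with Bloch's complex of \(X \times \mathbb{A}^i\), then globalise by Zariski descent. However, your second step carries all the content, and it relies on the wrong tool. With \(d = \dim X\), the map you need factors as
\[
  z_{equi}(X \times \mathbb{A}^i / X, 0)(\Delta^\bullet) \subset z_{equi}(X \times \mathbb{A}^i, d)(\Delta^\bullet) \subset z^i(X \times \mathbb{A}^i, \bullet),
\]
where the middle complex consists of cycles that are equidimensional over \(\Delta^\bullet\) only. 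Suslin's moving lemma, applied to the affine scheme \(X \times \mathbb{A}^i\), says precisely that the second inclusion is a quasi-isomorphism. It puts cycles in good position with respect to the faces of \(\Delta^\bullet\) and says nothing about their fibres over \(X\). The first inclusion, i.e.\ achieving quasi-finiteness over \(X \times \Delta^\bullet\), is the case \(Y = \mathbb{A}^i\), \(r = 0\) of Friedlander--Voevodsky duality \(C_* z_{equi}(Y, r)(X) \simeq z_{equi}(X \times Y, r + d)(\Delta^\bullet)\). Historically this was the step that needed resolution of singularities, and a characteristic-free proof needs input beyond Suslin's lemma here.

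Generic linear projections cannot supply this, because any move that commutes with the projection to \(X\) preserves fibres over \(X\). Take \(X = \mathbb{A}^1_x\), \(i = 2\), and \(Z = V(x, y_1) \subset X \times \mathbb{A}^2\). It lies in \(z^2(X \times \mathbb{A}^2)\), and even in \(z_{equi}(X \times \mathbb{A}^2, 1)\), but not in \(z_{equi}(\mathbb{A}^2, 0)(X)\). Automorphisms or projections of the \(\mathbb{A}^2\)-factor, even ones depending on \(X \times \Delta^n\), keep it over \(x = 0\). To reach the subcomplex you need homotopies built from functions on \(X\). An example is \(V(y_1, x - t f(y_2)) \subset X \times \mathbb{A}^2 \times \Delta^1\) with \(f\) non-constant, whose fibre at \(t = 1\) is finite and surjective over \(X\). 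Doing this systematically, compatibly with all faces of all \(\Delta^n\), is the whole difficulty. As written, your second step restates the core of the theorem rather than proving it.

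There are also two smaller problems in your third step. First, \(C_* z_{equi}(\mathbb{A}^i, 0)\) is not known to satisfy Mayer--Vietoris as a presheaf, so comparing descent spectral sequences does not work directly. Instead, argue that a quasi-isomorphism on all affines is a quasi-isomorphism of Zariski sheafifications; then only the Chow side needs Mayer--Vietoris. Second, the localisation theorem recalled in the paper is stated for quasi-projective schemes, so the merely separated case needs more than what you cite.
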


\section{Proof of Theorem~\ref{theorem:amenable}}\label{section:proof-amenable}

Note that Minkowski polynomials are precisely transverse non-degenerate Laurent polynomials \(\mathsf{p} \in \mathbb{Z}[x_1^{\pm}, x_2^{\pm}, x_3^{\pm}]\) satisfying the binomial principle whose facets of Newton polytope admit lattice Minkowski decomposition into segments or triangles of width 1. Based on this, we propose a generalisation of Minkowski polynomials.

\begin{definition}
  Let \(P\) be a reflexive polytope of \(\dim(P) \leqslant 4\). We say that \(P\) is
  \begin{itemize}
  \item \emph{amenable} if any face of \(P\) admits a lattice Minkowski decomposition into lattice polytopes of width one (so their faces are of this form too);
  \item \emph{strictly amenable} if \(P\) is amenable, and \(P^{\vee}\) admits a fine regular star triangulation with smooth face fan (see~\cite{batyrev/polyhedra} or~\cite[\nopp 2.3]{doran/k-theory}).
  \end{itemize}
\end{definition}

\begin{remark}
  The last assumption is trivial for \(\dim(P) < 4\), this follows from existence of maximal projective crepant partial desingularisation (see~\cite{batyrev/polyhedra} or~\cite[\nopp 2.3]{doran/k-theory}).
\end{remark}

\begin{definition}\label{definition:totally-rational}
  Let \(\mathsf{q} \in \mathbb{C}[x_1^{\pm}, \ldots, x_m^{\pm}]\), \(m \leqslant 3\), be a Laurent polynomial, and \(K \subset \mathbb{C}\) be its field of definition. We say that \(\mathsf{q}\) is \emph{totally rational} if \(\mathsf{q} = \mathsf{q}_1 \cdot \cdots \cdot \mathsf{q}_r\) is totally reducible over \(K\), and for each component \(\mathsf{q}_j\) the following assumptions hold:
  \begin{itemize}
  \item Newton polytope of \(\mathsf{q}_j\) is of lattice width one, i.e., after a unimodular change of coordinates it is of the form \(\mathsf{q}_j = F_j(x_1, \ldots, x_{m - 1}) x_m + G_j(x_1, \ldots, x_{m - 1})\);
  \item the intersections \(\cap_{j \in J} \{\mathsf{q}_j = 0\}\) are rational over \(K\) for any \(J \subset \{1, \ldots, r\}\);
  \item \(F_j(x_1, \ldots, x_{m - 1}) = G_j(x_1, \ldots, x_{m - 1}) = 0\) is totally reducible over \(\mathbb{R} \cap \overline{\mathbb{Q}}\).
  \end{itemize}
\end{definition}

\begin{remark}
  In practice totally rational Laurent polynomials arise as face polynomials of some Laurent polynomial whose edge polynomials are of the form \((1 + t)^l\) (cf. Definition~\ref{definition:minkowski}). In other words, its coefficients are uniquely determined, and ``total rationality'' is a property of Minkowski summands of facets of Newton polytope.
  
  Actually, the third assumption in Definition~\ref{definition:totally-rational} would be used only for \(m = 3\). In this case an irreducible totally rational Laurent polynomial \(\mathsf{q}_j \in \mathbb{C}[x_1^{\pm}, x_2^{\pm}, x_3^{\pm}]\) defines a toric hypersurface \(\{\mathsf{q}_j = 0\} \subset \mathbb{G}_m^3\) which admits a decomposition
  \[
    \{\mathsf{q}_j = 0\} = \{(x_1, x_2) \in \mathbb{G}_m^2 : (F_j \cdot G_j) (x_1, x_2) \neq 0\} \sqcup \{(x_1, x_2) \in \mathbb{G}_m^2 : F_j = G_j = 0\} \times \mathbb{G}_m.
  \]
\end{remark}

\begin{definition}
  Let \(\mathsf{p} \in \mathbb{C}[x_1^{\pm}, \ldots, x_n^{\pm}]\), \(n \leqslant 4\), be a transverse non-degenerate Laurent polynomial with an amenable Newton polytope. We assume that Minkowski decompositions of faces are compatible with factorisations of face polynomials. We say that \(\mathsf{p}\) is
  \begin{itemize}
  \item \emph{weakly amenable} if edge polynomials vanish only at roots of unity;
  \item \emph{amenable} (respectively, \emph{strictly amenable}) if edge polynomials are products of cyclotomic polynomials, and the Newton polytope is (strictly) amenable. For \(n = 4\) we require that face polynomials are \emph{totally rational} (see Definition~\ref{definition:totally-rational}).
  \end{itemize}
\end{definition}

\begin{remark}\label{remark:rationality}
  For \(\dim(P) < 4\) amenability and strict amenability are the same thing. A non-degenerate Laurent polynomial has Newton polytope of lattice width one if and only if it could be written as \(P(y_1, \ldots, y_{m - 1}) y_m + Q(y_1, \ldots, y_{m - 1}) = 0\) for some Laurent polynomials \(P\) and \(Q\), so \(K\)-rationality of irreducible components of zero locus of any face polynomial is immediate. Intersections of irreducible components for different face polynomials are also of this form. The only problematic point are intersections of irreducible components for the same face polynomial. In examples from Proposition~\ref{proposition:examples} and Theorem~\ref{theorem:apery} the \(K\)-rationality could be checked by direct computations. In other words, for our purposes transverse non-degenerate Laurent polynomials behave closely enough to the classical non-degenerate Laurent polynomials of Danilov--Khovanskii, but one also needs to keep track of intersections of components of zero loci of face polynomials.
\end{remark}

Introduced Laurent polynomials are very close to \emph{tempered} Laurent polynomials in the sense of Villegas and Doran--Kerr (see~\cite{villegas/mahler,doran/k-theory,kerr/motivic,kerr/unipotent}).

\begin{definition}
  Let \(\mathsf{p} \in \mathbb{C}[x_1^{\pm}, \ldots, x_n^{\pm}]\) be a Laurent polynomial with reflexive Newton polytope \(\Delta\) admitting a weak log Calabi--Yau compactification (see Definition~\ref{definition:log-CY}). We say that the Laurent polynomial \(\mathsf{p}\) is \emph{tempered} if the toric-coordinate symbol \(\{x_1, \ldots, x_n\}\) in \(\CH^{n}(\mathbb{G}_m^n, n)_{\mathbb{Q}}\) completes to a motivic cohomology class in \(H^{n}_{\mathcal{M}}(\mathcal{W} \setminus \mathcal{W}_0, \mathbb{Q}(n))\), where \(\mathcal{W}_0\) is the fibre of the potential \(\mathsf{f}\) over infinity.
\end{definition}

\begin{remark}[see~\cite{villegas/mahler}]
  For \(n = 2\) a Laurent polynomial with reflexive Newton polygon is tempered if and only if edge polynomials vanish only at roots of unity. In other words, it is precisely a weakly amenable Laurent polynomial.
\end{remark}

\begin{proposition}[see~\cite{daSilva/arithmetic}]
  Minkowski polynomials are tempered.
\end{proposition}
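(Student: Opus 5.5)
The plan is to argue in three steps: build an explicit compactification, compute the residues of the toric-coordinate symbol along its boundary divisors, and kill those residues using the Minkowski structure of the face polynomials together with the vanishing results for higher Chow groups of number fields recalled in Subsection~\ref{subsection:chow}. After the change of variables implicit in Definition~\ref{definition:minkowski}, a Minkowski polynomial \(\mathsf{p}\) has integer coefficients, so everything can be done over \(\mathbb{Q}\). This is harmless by Proposition~\ref{proposition:base-change}.

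First we fix the geometry. Since \(\dim(P)=3\), a crepant resolution \(X_{P^\vee}\) exists. A Minkowski polynomial is transverse non-degenerate, so Proposition~\ref{proposition:compactification} gives a tame compactification \(\mathsf{f}\colon\mathcal{W}\to\mathbb{P}^1\), obtained by blowing up the base locus of \(\mathcal{P}_{\mathsf{p}}\). The toric boundary of \(X_{P^\vee}\) lies in \(\mathcal{W}_0\). Hence \(\mathcal{W}\setminus\mathcal{W}_0\) is the torus \(\mathbb{G}_m^3\) together with the exceptional divisors \(E\) lying over the base-locus curves \(C_{\delta}=\{\mathsf{p}_\delta=0\}\subset\mathbb{T}_\delta\), and over their intersections at torus-fixed points and edges. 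Each such \(E\) is birationally \(C_\delta\times\mathbb{A}^1\), where the \(\mathbb{A}^1\) coordinate is \(t\). The facets \(\delta\) are Minkowski sums of unit segments and \(A_n\)-triangles. Therefore \(\mathsf{p}_\delta\) factors into pieces of the form \((1+x)\) and \((1+y)^n+x\) up to monomials, and every component of \(C_\delta\) is a rational curve over \(\mathbb{Q}\).

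Second, we apply the localisation sequence of Subsection~\ref{subsection:chow} to \(\mathbb{G}_m^3\subset\mathcal{W}\setminus\mathcal{W}_0\). The symbol \(\{x_1,x_2,x_3\}\) extends if and only if its residues in \(\bigoplus_E\CH^2(E^\circ,2)_\mathbb{Q}\) vanish, up to a further correction along codimension-two strata. Along \(E\) over \(C_\delta\), after a monomial change of coordinates adapted to \(\delta\), the residue is \(\pm\{y_1,y_2\}|_{C_\delta}\) pulled back to \(E\). On each rational component \(C\cong\mathbb{P}^1\setminus S\), the tame symbols of \(\{y_1,y_2\}\) at the points of \(S\) are the values at the points where \(C\) meets the edges of \(\delta\). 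There the edge polynomials are \((1+t)^l\), so these values are \(\pm1\), which are torsion. The localisation sequence for \(\mathbb{P}^1\setminus S\), combined with \(\CH^2(\mathbb{P}^1_\mathbb{Q},2)_\mathbb{Q}=\CH^2(\Spec\mathbb{Q},2)_\mathbb{Q}=0\) (Corollary~\ref{corollary:projective-vanishing}), then shows that \(\{y_1,y_2\}|_C=0\) in \(\CH^2(C,2)_\mathbb{Q}\). Pulling back along \(E\to C\) kills the residue. The same argument, now with \(\CH^1(\cdot,1)\) (units) on the codimension-two strata over edges and vertices, shows that the residual constants are roots of unity and hence vanish rationally.

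The step I expect to be hardest is making the second step rigorous on \(\mathcal{W}\) itself rather than on a birational model. One has to check that the residue of the symbol along each exceptional divisor of the iterated blow-up really is the pullback of \(\{y_1,y_2\}|_{C_\delta}\). This includes the divisors over the points where several components of \(C_\delta\) meet, for example the singular points coming from \((1+y)^n\). One also has to check that the lifts chosen on different \(E\) are compatible on the intersections \(E\cap E'\). I would handle this by working stratum by stratum with the localisation sequence, starting from the deepest strata. At each stage I would use the vanishing \(\CH^q(\Spec\mathbb{Q},n)_\mathbb{Q}=0\) in the relevant bidegrees from Subsection~\ref{subsection:chow}, so that the obstructions on points and on rational curves disappear rationally.
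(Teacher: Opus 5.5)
Your argument is correct and is essentially the paper's own route: the statement itself is only cited from \cite{daSilva/arithmetic}, but it is the \(n=3\) case of the proof of Proposition~\ref{proposition:amenable}, where residues along the boundary reduce to toric-coordinate symbols on face strata, and these vanish rationally because edge polynomials are cyclotomic and facet components are \(\mathbb{Q}\)-rational curves with \(\CH^2(\mathbb{P}^1_K,2)_{\mathbb{Q}}=0\). The paper sidesteps the step you flag as hardest by computing residues on the pencil total space \(\widetilde{\mathcal{X}}_{-}\), whose boundary is the product \(\mathbb{A}^1\times\widetilde{D}\), and then pulling the class back to \(\mathcal{W}\setminus\mathcal{W}_0\); in any case, for \(n=3\) the codimension-two obstructions lie in \(\CH^1(\cdot,2)\), which vanishes on smooth strata, so only the first residues need checking.
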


\begin{notation}\label{notation:toric}
  Let \(\mathsf{p} \in \mathbb{C}[x_1^{\pm}, \ldots, x_n^{\pm}]\) be a Laurent polynomial with reflexive Newton polytope \(\Delta\) satisfying assumptions of Proposition~\ref{proposition:compactification} or Remark~\ref{remark:compactification}, and \(\mathbb{P}_{tr(\Delta^{\vee})^{\vee}} \rightarrow \mathbb{P}_{\Delta}\) be a (partial) toric crepant resolution. Let us denote by \(\{X^{\lambda} = X_t\}\), \(t = \lambda^{-1}\), the pencil of anticanonical hypersurfaces
  \(
    \mathbb{P}^1 \times \mathbb{P}_{\Delta} \supset \mathcal{X} \rightarrow \mathbb{P}^1
  \)
  given by taking the Zariski closure of \(\{1 - t \mathsf{p} = 0\} = \{\mathsf{p} = \lambda\} \subset \mathbb{A}^1 \times \mathbb{G}_m^n\). Its base locus is \(X^{\lambda} \cap X_0\), where \(X_0 = \mathbb{D} \subset \mathbb{P}_{\Delta}\), and \(\mathbb{D}\) is the union of toric boundary components:
  \[
    \mathbb{D} = \bigcup_{\delta \in \Delta(1)} \mathbb{D}_{\delta} \subset
    \bigsqcup_{i = 1}^n \left (
      \bigsqcup_{\delta \in \Delta(i)} \mathbb{D}^*_{\delta} \right ) = \mathbb{P}_{\Delta} \setminus \mathbb{G}_m^n, \quad
    \mathbb{D}^*_{\delta} \cong \mathbb{G}_m^{n - i}.
  \]
  Moreover, \(D_{\delta} = \mathbb{D}_{\delta} \cap X_t\), \(D_{\delta}^* = \mathbb{D}_{\delta}^* \cap X_t\), \(D = \mathbb{D} \cap X_t\) are independent of \(t \neq 0\), and
  \[
    \widetilde{\mathcal{X}}_{-} = \widetilde{\mathcal{X}} \setminus (\{\infty\} \times \widetilde{X}^{\infty}) \subset \mathbb{A}^1_{\lambda} \times \mathbb{P}_{tr(\Delta^{\vee})^{\vee}}, \quad
    \widetilde{X} = \{(\lambda, x) \; \vert \; x \in \widetilde{\mathcal{X}}^{\lambda}\} \subset \mathbb{P}^1_{\lambda} \times \mathbb{P}_{tr(\Delta^{\vee})^{\vee}}.
  \]
  We also have \(\widetilde{X}_0 = \widetilde{X}^{\infty} = \widetilde{\mathbb{D}}\), and \(\widetilde{X} \cap (\mathbb{A}^1 \times \widetilde{\mathbb{D}}) \simeq \mathbb{A}^1 \times \widetilde{D}\), where \(\widetilde{D} = \widetilde{\mathbb{D}} \cap \widetilde{X}_t\), and
  \[
    \widetilde{\mathbb{D}} = \bigcup_{\widetilde{\delta} \in tr(\Delta^{\vee})^{\vee}(1)} \mathbb{D}_{\widetilde{\delta}} =
    \bigsqcup_{i = 1}^n \left (
      \bigsqcup_{\widetilde{\delta} \in tr(\Delta^{\vee})^{\vee}(i)} \mathbb{D}^*_{\widetilde{\delta}} \right ) =
    \mathbb{P}_{tr(\Delta^{\vee})^{\vee}} \setminus \mathbb{G}_m^n, \quad \mathbb{D}_{\widetilde{\delta}} \twoheadrightarrow \mathbb{D}_{\delta} \subset \mathbb{P}_{\Delta}.
  \]
  Here we denote by \(\widetilde{X}_t\) the strict transform of \(X_t\) (see~\cite[Section~2]{doran/k-theory} for details).
\end{notation}

\begin{remark}
  If \(D^* \subset \mathbb{D}^* \cong \mathbb{G}_m^{n - i}\) is an irreducible smooth toric hypersurface, then the choice of coordinate functions \(x_1, \ldots, x_{n - i} \in \Gamma(\mathbb{D}^*, \mathcal{O}^*_{\mathbb{D}^*})\) defines a class \(\{x_1, \ldots, x_{n - i}\}\) in \(\CH^{n - i}(\mathbb{D}^*, n - i)_{\mathbb{Q}}\) which in turn restricts to \(\CH^{n - i}(D^*, n - i)_{\mathbb{Q}}\) (see~\cite[\nopp 3]{doran/k-theory}).
\end{remark}

\begin{proposition}\label{proposition:amenable}
  Let \(\mathsf{p} \colon \mathbb{G}_m^n \rightarrow \mathbb{C}\) be a Laurent polynomial which is weakly amenable for \(n = 2,3\). For \(n = 4\) assume that it is amenable (respectively, strictly amenable), and that \(\mathsf{p}\) is defined over a totally real abelian number field. Then there exists a weak (respectively, smooth) log Calabi-Yau compactification \(\mathsf{f} \colon \mathcal{W} \rightarrow \mathbb{P}^1\), and \(\mathsf{p}\) is tempered.
\end{proposition}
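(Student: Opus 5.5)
The proof splits into two parts: building the compactification, then the temperedness argument in the style of Doran--Kerr~\cite{doran/k-theory}.

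\emph{Compactification.} Transverse non-degeneracy together with Proposition~\ref{proposition:compactification} gives the compactification. For \(n \leqslant 3\), and for strictly amenable \(n = 4\), the dual polytope \(\Delta^{\vee}\) has a fine regular star triangulation with smooth face fan. So there is a crepant resolution \(\mathbb{P}_{tr(\Delta^{\vee})^{\vee}}\), and resolving the base locus \(\widetilde{D} \times \{0\}\) of the pencil from Notation~\ref{notation:toric} produces a smooth tame \(\mathsf{f} \colon \mathcal{W} \rightarrow \mathbb{P}^1\). In the merely amenable case we use Remark~\ref{remark:compactification} instead: the partial crepant resolution has isolated singularities lying in \(\mathcal{W}_0\), which gives a weak compactification.

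\emph{Reduction of temperedness.} Since \(\mathcal{W} \setminus \mathcal{W}_0\) is obtained from \(\mathbb{A}^1 \times \mathbb{G}_m^n\) by adding the blown-up base locus, we follow~\cite[Section~3]{doran/k-theory}. By the localisation sequence, \(\{x_1, \ldots, x_n\}\) extends if and only if its residues (tame symbols) along all boundary divisors vanish in \(\CH^{n-1}(\,\cdot\,, n-1)_{\mathbb{Q}}\), possibly after correcting by classes supported on the boundary.

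The exceptional divisors over the base locus are \(\mathbb{P}^1\)-bundles over \(\mathbb{A}^1 \times \widetilde{D}\). After blowing up, Theorem~\ref{theorem:projective-bundle} reduces each residue to a class on a stratum \(D^*_{\delta} \subset \mathbb{D}^*_{\delta} \cong \mathbb{G}_m^{n-i}\). There it is the restriction of the coordinate symbol \(\{x_1, \ldots, x_{n-i}\}\) to the zero locus of the face polynomial \(\mathsf{p}_{\delta}\). So we must show, iteratively in \(i\), that \(\{x_1, \ldots, x_{n-i}\}|_{V(\mathsf{p}_\delta)}\) is zero, or is killed by lower residues, in \(\CH^{n-i}(V(\mathsf{p}_{\delta}), n-i)_{\mathbb{Q}}\). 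We organise this as a descending induction on the dimension of faces.

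\emph{Case analysis by face dimension.}

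\textbf{Edges.} The components of \(V(\mathsf{p}_\delta)\) are points defined by roots of unity. Since \(\{\zeta\} \in K^*_{\mathbb{Q}}\) is torsion, these classes vanish rationally.

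\textbf{Two-dimensional faces.} By amenability the Minkowski summands have width one, so each component is a rational curve of the form \(F(x)y + G(x) = 0\). On it we have \(\{x, y\} = \{x, -G/F\}\). Its residues lie at roots of unity by the edge condition, so it extends to a class on \(\mathbb{P}^1\). There it vanishes because \(\CH^2(\mathbb{P}^1_K, 2)_{\mathbb{Q}} \simeq K_2(K)_{\mathbb{Q}} \oplus K^*_{\mathbb{Q}}\), and the \(K^*\)-part is exactly the residue data. The \(K_2(K)_{\mathbb{Q}}\)-part is handled by Corollary~\ref{corollary:projective-vanishing} for \(n = 4\); for \(n \leqslant 3\) it is absorbed as in~\cite{daSilva/arithmetic}. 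This settles \(n \leqslant 3\), where \(K_2\) of a number field is torsion modulo tame symbols of roots of unity, following~\cite{villegas/mahler}.

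\textbf{Three-dimensional faces (only for \(n = 4\)).} This is the key case. By total rationality each component \(\{\mathsf{q}_j = 0\}\) decomposes as in the Remark following Definition~\ref{definition:totally-rational}. It is \(K\)-rational, and by Corollary~\ref{corollary:toric-surfaces} it is birational to \(\mathbb{P}^2\), \(\mathbb{P}^1 \times \mathbb{P}^1\) or a Hirzebruch surface. We then show that the symbol \(\{x_1, x_2, x_3\}\) on the component extends to a smooth projective model \(S\) after killing residues along curves, which were treated in the previous step. It then lies in \(\CH^3(S, 3)_{\mathbb{Q}}\), and this group vanishes over a totally real field by Corollary~\ref{corollary:hirzebruch-vanishing}, together with blow-up formulas handled via Corollary~\ref{corollary:removing-points} and Corollary~\ref{corollary:affine-vanishing}. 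Intersections of components, and the loci \(F_j = G_j = 0\) being totally reducible over \(\mathbb{R} \cap \overline{\mathbb{Q}}\), control the points removed.

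The totally real abelian hypothesis is needed here, for two reasons. It makes \(\CH^q(\Spec K, 2q-1)_{\mathbb{Q}}\) vanish for even \(q\). Abelianness keeps the roots of unity and the residue fields inside a field where this still applies, using Proposition~\ref{proposition:base-change}.

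\emph{Main obstacle.} The hard step is the three-dimensional face case. The difficulty is the bookkeeping of the blow-ups and removed points needed to pass from the open torus hypersurface to a smooth rational surface. At each stage we must verify that the relevant higher Chow groups of points and curves over \(K\) vanish in the needed bidegrees, without leaving a nonzero residue in \(\CH^2(\,\cdot\,,3)\) or \(\CH^1(\,\cdot\,,1)\). The first thing to try is to use the explicit fibration \(\{\mathsf{q}_j = 0\} \to \mathbb{G}_m^2\) given by the width-one structure, and to compute via the localisation sequence along the discriminant \(F_j G_j = 0\).
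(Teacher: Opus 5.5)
Your overall route is the paper's. You compactify via a maximal projective crepant partial resolution and Przyjalkowski's construction, reduce via \cite[Proposition~3.5]{doran/k-theory} to the vanishing of toric-coordinate symbols on the strata $D^*_{\delta,l}$, and induct on face dimension using roots of unity, $\CH^2(\mathbb{P}^1_K,2)_{\mathbb{Q}}=0$, and $\CH^3(\cdot,3)_{\mathbb{Q}}$ of rational surfaces over a totally real field.

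The genuine gap is your reduction step for $n=4$. The boundary $B=\mathbb{A}^1\times\widetilde D$ of $\mathbb{G}_m^n$ inside $\widetilde{\mathcal X}_-$ is an SNC divisor whose components meet along curves. Vanishing of the first residues on the disjoint pieces of $B^1\setminus B^2$ therefore only extends the symbol to $\widetilde{\mathcal X}_-\setminus B^2$. Extending further is obstructed by a secondary residue (the second higher residue map of the coniveau spectral sequence), with values in a subquotient of $\CH^2(B^2\setminus B^3,3)_{\mathbb{Q}}$. This is not a symbol-type class, so it is not controlled by your per-stratum vanishing. Your phrase ``possibly after correcting by classes supported on the boundary'' describes only its indeterminacy, not why it vanishes.

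The paper kills it as follows. $B^2\setminus B^3$ is $\mathbb{A}^1$ times open curves $W$. These either lie over lower-dimensional faces, where they are rational by the width-one condition, or are intersections $D^*_{\widetilde\delta,l_1}\cap D^*_{\widetilde\delta,l_2}$ of distinct components of one face hypersurface. The latter are exactly what the second condition of Definition~\ref{definition:totally-rational} makes $K$-rational. Then $\CH^2(W,3)_{\mathbb{Q}}$ is a quotient of $\CH^2(\Spec K,3)_{\mathbb{Q}}=\mathbb{Q}^{r_2}=0$ (Corollary~\ref{corollary:affine-vanishing}). So the totally real hypothesis is already needed in the reduction, not only in the three-dimensional step where you placed it. For $n\le 3$ the corresponding target is $\CH^1$ of points in degree $2$, which vanishes automatically.

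Some smaller statements also need fixing.

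\begin{itemize}
\item Inside $\mathcal W\setminus\mathcal W_0$, the preimage of the base locus is $B=\mathbb{A}^1\times\widetilde D$ itself, not a $\mathbb{P}^1$-bundle over it. Likewise, the open part is the graph $\{\mathsf p=\lambda\}\cong\mathbb{G}_m^n$, not $\mathbb{A}^1\times\mathbb{G}_m^n$. The passage to $D^*_\delta$ is $\mathbb{A}^1$-homotopy invariance plus $D^*_{\widetilde\delta}\cong\mathbb{G}_m^d\times D^*_\delta$ and \cite[Proposition~3.5]{doran/k-theory}. Theorem~\ref{theorem:projective-bundle} plays no role; if it did, it would add a summand $\CH^{n-2}(\cdot,n-1)_{\mathbb{Q}}$ that you never treat.
\item $\CH^2(\mathbb{P}^1_K,2)\simeq\CH^2(\Spec K,2)\oplus\CH^1(\Spec K,2)=K_2(K)$ has no $K^*$ summand. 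The groups $K^*$ appear only as targets of residues from the open curve. Since $K_2(K)_{\mathbb{Q}}=0$ for every number field (Corollary~\ref{corollary:projective-vanishing}), this step is uniform in $n$.
\item The three-dimensional step needs no fibration over $\mathbb{G}_m^2$. The closure $D_{\delta,l}$ is already smooth projective by transverse non-degeneracy. The paper blows it down to a minimal surface $S$ and localises along the exceptional curves $E$. This leaves $\CH^3$ of $S$ minus points, handled by Corollaries~\ref{corollary:removing-points} and~\ref{corollary:hirzebruch-vanishing}, and $\CH^2(E,3)_{\mathbb{Q}}$, handled by the same totally real vanishing as above.
\end{itemize}
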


\begin{proof}
  Let \(\Delta\) be the reflexive Newton polytope, and \(\Delta^{\vee}\) be its dual. Then there exists a fine regular star triangulation of \(\Delta^{\vee}\) defining a toric variety \(\mathbb{P}_{tr(\Delta^{\vee})^{\vee}}\) (defined by the subdivision of the face fan of \(\Delta^{\vee}\), i.e., of the normal fan of \(\Delta\)). It is known that such triangulations correspond to maximal projective crepant partial desingularisations \(\mathbb{P}_{tr(\Delta^{\vee})^{\vee}} \rightarrow \mathbb{P}_{\Delta}\) (see~\cite{batyrev/polyhedra} or~\cite[\nopp 2.3]{doran/k-theory}). Moreover, it is known that \(\Sing(\mathbb{P}_{tr(\Delta^{\vee})^{\vee}})\) is of codimension at least 4 (i.e., \(\mathbb{P}_{tr(\Delta^{\vee})^{\vee}}\) has at worst isolated singularities), and that singularities are at worst \(\mathbb{Q}\)-factorial terminal. If \(\Delta\) is strictly amenable, then \(\mathbb{P}_{tr(\Delta^{\vee})^{\vee}}\) is smooth. So by Proposition~\ref{proposition:compactification} and Remark~\ref{remark:compactification} there exists a (weak) log-Calabi--Yau compactification \(\mathcal{W} \rightarrow \mathbb{P}^1\) decomposing as \(\mathcal{W} \xrightarrow{\mathrm{Bl}} \widetilde{\mathcal{X}} \dashrightarrow \mathbb{P}^1\) (we follow Notation~\ref{notation:toric}). Then it is enough to construct the required class in \(\CH^{n}(\widetilde{\mathcal{X}}_{-}, n)_{\mathbb{Q}}\) and take its pull-back to \(\mathcal{W} \setminus \mathcal{W}_0\) (which is correctly defined and is injective, see~\cite[Theorem~1.2]{hanamura/blowups}). By assumption \(\mathsf{p}\) up to shift to a constant is defined over some subfield \(K\) of a cyclotomic field (equivalently, \(K\) is an abelian number field). If \(n = 4\), we also require that \(K\) should be totally real.

  Note that the variety \(\widetilde{\mathcal{X}}_{-} \setminus (\mathbb{A}^1 \times \widetilde{D})\) is actually isomorphic to \(\mathbb{G}_m^n\), i.e., \(\widetilde{\mathcal{X}}_{-} = \mathrm{Bl}(\mathcal{W} \setminus \mathcal{W}_0)\) is a fibre-wise compactification of the torus over \(\mathbb{A}^1\) by the SNC divisor \(B = \mathbb{A}^1 \times \widetilde{D}\). Even if our Laurent polynomial is only amenable and not strictly amenable, \(\widetilde{\mathcal{X}}_{-}\) is still smooth (see Remark~\ref{remark:compactification}). Then we could check that the symbol \(\{x_1, \ldots, x_n\} \in \CH^{n}(\widetilde{\mathcal{X}}_{-} \setminus B, n)_{\mathbb{Q}}\) completes to a motivic cohomology class in \(H^{n}_{\mathcal{M}}(\widetilde{\mathcal{X}}_{-}, \mathbb{Q}(n))\) by applying the \emph{coniveau spectral sequence} (see~\cite[\nopp 3.4]{kerr/abel-jacobi}). Let \(B = \cup_{i = 1}^N B_i\) be the decomposition into irreducible components, and put \(B^k = \cup_{\vert I \vert = k} \cap_{i \in I} B_i\) and \(\widetilde{B^k} = \sqcup_{\vert I \vert = k} \cap_{i \in I} B_i\). Then
  \begin{gather*}
    N^k C^{2p + *}_{\mathcal{M}}(\widetilde{\mathcal{X}}_{-}, \mathbb{Q}(p)) =
    \im \{ (2 \pi i)^k i_*^{\widetilde{B^k}} : C_{\mathcal{M}}^{2p - 2k + *} (\widetilde{B^k}, \mathbb{Q}(p - k)) \rightarrow
    C^{2 p + *}_{\mathcal{M}}(\widetilde{\mathcal{X}}_{-}, \mathbb{Q}(p))\}, \\
    \mathcal{M}(p)^{a,b}_0 = Gr^a_{-N} C_{\mathcal{M}}^{2p + a + b} (\widetilde{\mathcal{X}}_{-}, \mathbb{Q}(p)), \quad
    C^{*}_{\mathcal{M}}(\widetilde{\mathcal{X}}_{-}, \mathbb{Q}(p)) := Z^p_{\mathbb{R}}(\widetilde{\mathcal{X}}_{-}, 2p - *)
  \end{gather*}
  (see~\cite{kerr/abel-jacobi} for notation) define a fourth-quadrant spectral sequence with
  \[
    \mathcal{M}_1^{a,b} \simeq H^{2p - a + b}_{\mathcal{M}} (B^a \setminus B^{a + 1}, \mathbb{Q}(p - a)), \quad
    \mathcal{M}_{\infty}^{a,b} = Gr^a_{-N} H_{\mathcal{M}}^{2p + a + b}(\widetilde{\mathcal{X}}_{-}, \mathbb{Q}(p)).
  \]
  The higher differentials \(d_r\) on \(\mathcal{M}(p)^{0,b}_r\) yield \emph{higher residue maps} (see~\cite[\nopp 3.1]{kerr/abel-jacobi})
  \begin{gather*}
    Res^r \colon \{ \ker(Res^{r - 1}) \subset H_{\mathcal{M}}^{2p + b} (\widetilde{\mathcal{X}}_{-} \setminus B, \mathbb{Q}(p)) \} \rightarrow
    \\ \rightarrow \{\text{subquotient of } H^{2p + b - 2r + 1}_{\mathcal{M}} (B^r \setminus B^{r + 1}, \mathbb{Q}(p - r))\}
  \end{gather*}
  with \(\cap_{r = 1}^{\lfloor2 p + b - p/2\rfloor} \ker(Res^r) = \im (H^{2p + b}_{\mathcal{M}}(\widetilde{\mathcal{X}}_{-}, \mathbb{Q}(p)) \rightarrow H^{2p + b}_{\mathcal{M}}(\widetilde{\mathcal{X}}_{-} \setminus B, \mathbb{Q}(p)))\).

  In our situation \(p = n\), \(b = -n\), and actually we have \(r = 1\) for \(n = 2,3\), and \(r = 1, 2\) for \(n = 4\). Recall that \(\widetilde{D} = \cup_{\widetilde{\delta} \in tr(\Delta^{\vee})^{\vee}(1)} D_{\widetilde{\delta}}\), where we have \(D_{\widetilde{\delta}}^* \simeq \mathbb{G}_m^d \times D_{\delta}^*\) for some \(d\). For \(n = 4\) our assumptions imply that \(Res^2\) vanishes: the target of \(Res^2\) is a subquotient of \(H^1_{\mathcal{M}} (B^2 \setminus B^3, \mathbb{Q}(2)) \simeq \CH^2(B^2 \setminus B^3, 3)_{\mathbb{Q}}\). Note that \(B^2 \setminus B^3\) is the product of \(\mathbb{A}^1\) and a disjoint union of open subsets of intersection of surfaces. We could ignore the \(\mathbb{A}^1\) multiple, and by our assumptions these open curves are rational over \(K\) (see Remark~\ref{remark:rationality}), so could be realised as open subsets of \(\mathbb{A}^1_K\). It is not hard to check that \(\CH^2(W, 3)_{\mathbb{Q}} = 0\) for open subsets \(W \subset \mathbb{A}^1\) over a totally real number field (see Subsection~\ref{subsection:chow}), so \(Res^2\) vanishes.

  Now we have to check that the symbol \(\{x_1, \ldots, x_n\}\) in \(H^n_{\mathcal{M}}(\widetilde{\mathcal{X}}_{-} \setminus B, \mathbb{Q}(n))\) lies in \(\ker(Res^1)\). We work with \(B^1 \setminus B^2\), which is the union of irreducible components of the boundary divisor minus their intersections. Recall that \(B = B^1 = \mathbb{A}^1 \times \widetilde{D}\), where \(\widetilde{D} = \bigcup_{\widetilde{\delta} \in tr(\Delta^{\vee})^{\vee}(1)} D_{\widetilde{\delta}}\). Let \(D_{\widetilde{\delta}} = \cup_l D_{\widetilde{\delta},l}\) be the decomposition into irreducible components over \(K\), and \(D_{\widetilde{\delta},l}^* = D_{\widetilde{\delta},l} \cap D^*_{\widetilde{\delta}} = D_{\widetilde{\delta},l} \setminus \cup_{\widetilde{\delta}' \subset \widetilde{\delta}} (D_{\widetilde{\delta},l} \cap D_{\widetilde{\delta}'})\). Note that the subset \(\cup_{\widetilde{\delta}' \subset \widetilde{\delta}} (D_{\widetilde{\delta},l} \cap D_{\widetilde{\delta}'}) \subset D_{\widetilde{\delta},l}\) is closed, and we have \(D_{\widetilde{\delta}}^* = \cup_l D_{\widetilde{\delta},l}^*\). Then we have \(B^1 \setminus B^2 \subset \cup D^*_{\widetilde{\delta}, l}\). Strictly speaking, \(B^1 \setminus B^2\) is a \emph{disjoint} union, while \(\cup D^*_{\widetilde{\delta}, l}\) is not: we also have to cut out intersections of the form \(W = D^*_{\widetilde{\delta}, l_1} \cap D^*_{\widetilde{\delta}, l_2}\). Fortunately, they are rational over \(K\) (see Remark~\ref{remark:rationality}), so \(\CH^{n - 2}(W, n - 1)_{\mathbb{Q}} = 0\), and it is enough to check that the symbol vanishes in \(\CH^{n - 1}(D^*_{\widetilde{\delta},l}, n - 1)_{\mathbb{Q}}\).

  We could check that the toric-coordinate symbol is zero in \(\CH^{n - i}(D^*_{\delta,l}, n - i)_{\mathbb{Q}}\) for all \(i \geqslant 1\), \(\delta \in \Delta(i)\), and all \(l\) instead of \(\CH^{n - 1}(D^*_{\widetilde{\delta},l}, n - 1)_{\mathbb{Q}}\) by applying~\cite[Proposition~3.5]{doran/k-theory}. Here we follow a similar notation for \(D_{\delta}^*\). So we only have to prove that \(\{x_1^{\delta}, \ldots, x_{n - i}^\delta\}\) give trivial classes in \(\CH^{n - i}(D^*_{\delta,l}, n - i)_{\mathbb{Q}}\) for all \(i \geqslant 1\) and \(\delta \in \Delta(i)\). We are going to do it by induction on \(S = \dim(D^*_{\delta,l}) = n - i - 1\).

  Note that every \(D_{\delta,l}\) is smooth, projective and rational over \(K\): they are smooth because \(\mathsf{p}\) is a transverse non-degenerate Laurent polynomial (see Subsection~\ref{subsection:toric}). We require that faces of Newton polytope admit lattice Minkowski decompositions into polytopes of width one, so irreducible components \(D_{\delta,l}\) are rational over \(K\) (see Remark~\ref{remark:rationality}).

  \begin{itemize}
  \item \(S = 0\): any \(D_{\delta,l} = D_{\delta,l}^*\) is a point, and the symbol \(\{x_1^{\delta}\}\) vanishes if and only if the edge polynomial vanishes only at roots of unity (see~\cite[Remark~3.3(b)]{doran/k-theory}); for \(n = 2\) we recover the usual definition of temperedness.
  \item \(S = 1\): any \(D_{\delta,l}\) is a smooth rational curve defined over \(K\). Let us apply the localisation sequence for the pair \((D_{\delta,l}, \cup_{\delta' \subset \delta} (D_{\delta,l} \cap D_{\delta'}))\). We obtain that
    \[
      \im \left (\CH^2(D_{\delta,l}, 2) \right) =
      \ker \left ( \CH^2(D_{\delta,l}^*, 2) \rightarrow \bigoplus_{\delta' \in \delta(1)} \CH^1(D_{\delta,l} \cap D_{\delta'}, 1) \right ).
    \]
    Note that \(\{x_1^{\delta}, x_2^\delta\}\) lies in RHS from the induction basis. At the same time \(D_{\delta,l} \simeq \mathbb{P}^1\) implies that \(\CH^2(D_{\delta,l}, 2)_{\mathbb{Q}} = 0\) (see Corollary~\ref{corollary:projective-vanishing}), hence the toric-coordinate symbol \(\{x_1^{\delta}, x_2^\delta\}\) is trivial in \(\CH^2(D^*_{\delta,l}, 2)_{\mathbb{Q}}\).
  \item \(S = 2\): any \(D_{\delta,l}\) is a smooth rational surface defined over \(K\). We also require that \(K\) is totally real. Then the surface \(D_{\delta,l}\) is isomorphic (over \(K' = \overline{\mathbb{Q}} \cap \mathbb{R}\)) to a blow up of \(\mathbb{P}^2\), \(\mathbb{P}^1 \times \mathbb{P}^1\) or a Hirzebruch surface in a finite number of real points or pairs of conjugate complex points by Corollary~\ref{corollary:toric-surfaces}.

    The localisation sequence for the pair \((D_{\delta,l}, \cup_{\delta' \subset \delta} (D_{\delta,l} \cap D_{\delta'}))\) implies that
    \[
      \im \left (\CH^3(D_{\delta,l}, 3) \right) =
      \ker \left ( \CH^3(D_{\delta,l}^*, 3) \rightarrow
        \CH^2 \left (\bigcup_{\delta' \subset \delta} D_{\delta,l} \cap D_{\delta'}, 2 \right ) \right ).
    \]
    Each irreducible component \(W \subset D_{\delta,l} \cap D_{\delta'}\) is isomorphic to \(\mathbb{P}^1\), hence we have \(\CH^2 \left (W, 2 \right )_{\mathbb{Q}} = 0\) (see Subsection~\ref{subsection:chow}). We could apply the local-global spectral sequence (see~\cite[\nopp 1.1]{doran/k-theory}) to conclude that the symbol \(\{x_1^{\delta}, x_2^\delta, x_3^\delta\}\) lies in the right hand side. Note that this is actually possible by previous steps of the induction. The only thing left is to show that \(\CH^3(D_{\delta,l}, 3)_{\mathbb{Q}} = 0\).

    Let \(\pi \colon D_{\delta,l} \rightarrow S\) be the blowing down to a minimal rational surface, i.e., let the surface \(S\) be either \(\mathbb{P}^2\), \(\mathbb{P}^1 \times \mathbb{P}^1\) or a Hirzebruch surface. Denote by \(E\) the exceptional divisor, which is a union of smooth rational curves defined over \(K'\) (and pairs of conjugated ones). Put \(U = D_{\delta,l} \setminus E\). From the localisation sequence for \((D_{\delta,l}, E)\) we obtain that \(\CH^3(D_{\delta,l}, 3)_{\mathbb{Q}} = 0\) provided that \(\CH^3(U, 3)_{\mathbb{Q}} = 0\) and \(\CH^2(E, 3)_{\mathbb{Q}} = 0\). Note that \(U\) is isomorphic to an open subset \(\pi(U) \subset S\) obtained by removing some points from \(S\). Note that by our assumptions components of \(E\) are defined over a totally real number field, so these points are real, though this does not matter yet. Corollary~\ref{corollary:removing-points} reduces this to \(\CH^3(S, 3)_{\mathbb{Q}} = 0\), which follows from Corollary~\ref{corollary:hirzebruch-vanishing}.

    Let us check that \(\CH^2(E, 3)_{\mathbb{Q}} = 0\). Consider the localisation sequence for \((E, N)\), where \(N\) is a union of intersections of components of \(E\). We have \(\CH^2(E, 3)_{\mathbb{Q}} = 0\) provided that \(\CH^2(E \setminus N, 3)_{\mathbb{Q}} = 0\) and \(\CH^1(N, 3)_{\mathbb{Q}} = 0\). The latter holds by Proposition~\ref{proposition:base-change}. Note that \(E \setminus N\) is a disjoint union of open subsets \(W\) of rational curves defined over a totally real number field, so \(\CH^2(W, 3)_{\mathbb{Q}} = 0\) (see Subsection~\ref{subsection:chow}). We obtain that \(\CH^3(D_{\delta,l}, 3)_{\mathbb{Q}} = 0\). \qedhere
  \end{itemize}
\end{proof}

\begin{remark}
  It's not clear how to generalise Proposition~\ref{proposition:amenable} to higher dimensions. One could ask for a combinatorial criterion on Newton polytopes of face polynomials such that corresponding non-degenerate toric hypersurfaces would compactify to smooth \(n\)-dimensional varieties \(X\) over some field \(L\) with \(\CH^{n + 1}(X, n + 1)_{\mathbb{Q}} = 0\).

  Nonetheless, Corollary~\ref{corollary:affine-vanishing} and Theorem~\ref{theorem:projective-bundle} imply that even for Bott towers we would eventually need to impose \(L = \mathbb{Q}\). In other words, this would require some delicate version of Batyrev's toric approach to Minimal Model Program in~\cite{batyrev/MMP}. Another problem is possible appearance of varieties with terminal singularities. Finally, in the coniveau spectral sequence we would have to consider higher residue maps as well.
\end{remark}

\section{Proof of Theorem~\ref{theorem:apery} and Corollary~\ref{corollary:apery}}\label{section:proof-apery}

The proof heavily depends on explicit calculations with Laurent polynomials requiring computer assistance, available at~\cite{ovcharenko/supplement}. We use SageMath, TOPCOM and Polymake for computations with the Newton polytope. We also use Fanosearch code for Magma (see~\cite{fanosearch/magma}) for the computation of Picard--Fuchs equations.

For Landau--Ginzburg models from Theorem~\ref{theorem:apery} (except the case \((D)\)) the associated Picard--Fuchs equation has degree \(2\) in \(s = t^{i_V}\). In fact, we use the following trick.

\begin{remark}
  Let \(V\) be a Fano complete intersection of index one of multidegree \((1, 1, \ldots, 1, j)\) in \(\Gr(k, N)\). Let \(W\) be a Fano complete intersection \((1, 1, \ldots, 1, 1)\) in \(\Gr(k, N)\) of the same codimension and of Fano index \(j\). Their regularised quantum periods are the same after the change of variables \(t \mapsto t^j\), and Picard--Fuchs equations can be obtained from one another as the pullback via \(t \mapsto t^j\) (see~\cite[Theorem~4.1]{przyjalkowski/grassmannian-planes}).
\end{remark}

\begin{proof}[Proof of Theorem~\ref{theorem:apery}]
  Computations from Appendix~\ref{appendix:laurent-polynomials} show that Landau--Ginzburg models from Subsection~\ref{subsection:polynomial-list} are amenable. Theorem~\ref{theorem:amenable} implies that they admit a (weak) log Calabi--Yau compactification and are tempered.

  Recall that for Grassmannians of planes \(\Gr(2, N)\) the dimension of its cohomology is \(b_{k} := \dim(H^k(\Gr(2, N))) = \lfloor k / 4 \rfloor + 1\) for even \(k\), and is zero otherwise. Note that the primitive cohomology group \(P^m(\Gr(2, N))\) is isomorphic to \(\mathbb{Q}\) for \(m \in 4 \mathbb{Z}\), where \(m \leqslant 2 N - 4\), and is zero otherwise. Actually, for any \(l \leqslant 2 N - 4\) we have \(b_{2 l} = \sum_{m \leqslant l} \dim(P^{2 m}(\Gr(2, N)))\), so \(\dim(P^{2 l}(\Gr(2, N))) = b_{2l} - b_{2 (l - 1)}\). As a consequence, we have a unique primitive cohomology class \(p_k\) in each even codimension. According to~\cite[\nopp 9.7]{galkin/gamma-mirror}, the Grassmannians \(\Gr(2, N)\) for \(N = 5,6,7\) have a principal Ap\'{e}ry constant \(\zeta(2)\) coming from the primitive class \(p_2\), while for \(N = 7\) we also have a principal Ap\'{e}ry constant \(\zeta(4)\) coming from the primitive class \(p_4\) (cf.~\cite{galkin/apery}). Moreover, note that \(c_1(E)^{\dim(\Gr) - 3} \cdot p_2 = c_1^{\dim(\Gr) - 7} (E) \cdot p_4 = 0\), so the class \(p_2\) restricts non-trivially for \((A)\)--\((D)\), while the class \(p_4\) vanishes on \((D)\). Here we denote by \(E\) the tautological bundle on \(\Gr(2, N)\), so \(c_1(E)\) generates the Picard group. It is also possible to compute the anticanonical quantum differential operator and its solutions via quantum Chevalley formula (see~\cite[Theorem~10.1]{fulton/quantum}) but we do not need to.

  Quantum Lefschetz principle (see~\cite[Section~8]{galkin/gamma-mirror}) then implies that there exists a principal Ap\'{e}ry constant \(\zeta(2)\) for \((A)\)--\((D)\), and a non-principal Ap\'{e}ry constant~\(\zeta(4)\) for the case \((D)\). Namely, it could be rephrased in terms of the Laplace transformation of Givental's J-functions (see~\cite[Lemma~8.1]{galkin/gamma-mirror}). Then anticanonical quantum \(\mathcal{D}\)-modules are also related by the Laplace transformation. Recall that the anticanonical quantum differential operator of a Fano variety coincides with the operator of minimal order and degree annihilating its quantum period (see Subsection~\ref{subsection:MS}). Then we only have to relate the corresponding quantum periods via~\cite[Theorem~F.1]{coates/periods}. Note that essentially the same argument was used in~\cite[Corollary~4.2]{golyshev/tate}.

  Computations from Appendix~\ref{appendix:local-systems} show that degree in \(t\) of the Picard--Fuchs equation of a Landau--Ginzburg model for \((A)\)--\((C)\) is \(d = 2 i_V\). Then there are no non-principal Ap\'{e}ry constants for \((A)\)--\((C)\) just for dimension reasons (see Remark~\ref{remark:recurrence}). Calculations of Appendix~\ref{appendix:local-systems} also show that the ramification defect of the local system~\(\mathcal{H}^3_v\) is 1, i.e., \(\rk(\IH^1(\mathbb{P}^1, \mathcal{H}_v^3)) = 1\) (see Subsection~\ref{subsection:MS}). Moreover, \(\IH^1(\mathbb{P}^1, \mathcal{H}_v^3)\) is pure of weight \(n = 4\), so \(\IH^1(\mathbb{P}^1, \mathcal{H}_v^3) \simeq \mathbb{Q}(-2)\), and there exists an exact sequence of mixed Hodge structures (see~\cite[Section~10]{kerr/unipotent} or~\cite[Section~6]{golyshev/apery})
  \[
    0 \rightarrow \IH^1(\mathbb{P}^1, \mathcal{H}^{n - 1}_v) \rightarrow
    \IH^1(\mathbb{P}^1 \setminus \{\infty\}, \mathcal{H}_v^{n - 1}) \rightarrow
    (\psi_{\infty} \mathcal{H}^{n - 1}_v)_{T_{\infty}}(-1) \rightarrow 0,
  \]
  where \(\psi_{\infty}(\cdot)\) is the limiting mixed HS functor at \(t = \infty\), and we put \((\cdot)_{T_{\infty}} = \coker(T_{\infty} - I)\).

  Calculations of Appendix~\ref{appendix:local-systems} and Euler--Poincar\'{e} theorem also imply that
  \begin{gather*}
    \rk(\IH^1(\mathbb{P}^1 \setminus \{\infty\}, \mathcal{H}_v^3)) =
    \sum_{\sigma \neq \infty} \rk(T_{\sigma} - I) - \rk(\mathcal{H}^3_v) \chi(\mathbb{A}^1) = \\
    = (\rf(\mathcal{H}^3_v) - 2 \rk(\mathcal{H}^3_v)) +
    (4 - \rk(T_{\infty} - I)) = 1 + \dim(\coker(T_{\infty} - I)).
  \end{gather*}
  For the cases \((A.1)\)--\((C.1)\) we have \(\coker(T_{\infty} - I) = 0\), so \(\IH^1(\mathbb{P}^1 \setminus \{\infty\}, \mathcal{H}_v^3) \simeq \mathbb{Q}(-2)\). For the case \((D)\) the monodromy at \(T_{\infty}\) is maximal unipotent, so \(\dim(\coker(T_{\infty} - I)) = 1\). The rank of \(\mathcal{H}^3_v\) is \(4\), so \(\coker(T_{\infty} - I) \simeq \mathbb{Q}(-3)\), and \(\rk(\IH^1(\mathbb{P}^1 \setminus \{\infty\}, \mathcal{H}_v^3)) = 2\).
\end{proof}

\begin{remark}\label{remark:calculations}
  Now let \(V\) be a smooth Fano variety, where \(n = \dim(V)\), and let us put \(R = d - \sum_{\sigma \neq 0,\infty} \rk(T_{\sigma} - I)\) (we continue to follow the notation introduced above). Assume also that \(\rk(\mathcal{H}^{n - 1}_v) = n\). Note that the assumption \(R = 0\) is a weak version of the ``normal conifold type'' assumption from~\cite[Remark~5.2]{golyshev/apery}. From the exact sequence of mixed Hodge structures mentioned above we obtain the identity
  \[
    d - 1 - R = \dim(\IH^1(\mathbb{P}^1, \mathcal{H}^{n - 1}_v)) + \dim(\coker(T_{\infty} - I)).
  \]
  If \(R = 0\), then we have \(\dim(\coker(T_{\infty} - I)) = d - 1 - \dim(\IH^1(\mathbb{P}^1, \mathcal{H}^{n - 1}_v))\): this happens for Fano threefolds in~\cite{golyshev/apery} and for \((A)\)--\((C)\) in Theorem~\ref{theorem:apery}. Finally, for \((D)\) we have \(d = 5\), \(\dim(\IH^1(\mathbb{P}^1, \mathcal{H}^{n - 1}_v)) = \dim(\coker(T_{\infty} - I)) = 1\), and \(R = 2\).
\end{remark}

\begin{proof}[Proof of Corollary~\ref{corollary:apery}]
  Theorem~\ref{theorem:apery} states that for the cases \((A.1)\)--\((C.1)\) the degree of Picard--Fuchs operator in \(t\) is \(2\), and the family \(\mathsf{f} \colon \mathcal{W} \rightarrow \mathbb{P}^1\) is tempered. Then we could apply~\cite[Theorem~10.10]{kerr/unipotent} to produce an admissible classical normal function on \(U = \mathbb{P}^1 \setminus \Sigma\) non-singular away from \(t = \infty\), where \(\Sigma\) is the singular locus of \(\mathcal{H}^3_v\). Moreover, by~\cite[Remark~5.2]{golyshev/apery} we could maximise the convergence radius by choosing among all truncated normal functions the one with the convergence radius \(\vert t_{\max} \vert\), where \(t_{\max}\) is a unique singular point of the local system with maximal absolute value (see Appendix~\ref{appendix:local-systems}). In other words, we choose a unique holomorphic solution to the inhomogeneous Picard--Fuchs equation analytic at \(0\) with no monodromy around the other singular point \(t_{\min}\). Consequently, we obtain a holomorphic function of the form \(f = \sum_{n = 0}^{\infty} (A_n f(0) - B_n) t^n\) (here we repeat the argument of the proof of~\cite[Theorem~10.1]{kerr/unipotent}).

  Our calculations show that \(\IH^1(\mathbb{P}^1 \setminus \{\infty\}, \mathcal{H}^3_v) \simeq \IH^1(\mathbb{P}^1, \mathcal{H}^3_v)\). In particular, it is a pure Hodge structure. Recall that the variety \(\mathcal{W}\) was constructed as an iterated blow-up of a toric variety in boundary strata (see Subsection~\ref{subsection:toric}), hence the Hodge conjecture for \(\mathcal{W}\) trivially holds. By Decomposition Theorem (see~\cite[\nopp 3.4,3.5]{kerr/exponential}) we could realise \(\IH^1(\mathbb{P}^1, \mathcal{H}^3_v)(2)\) as a \(\mathbb{Q}\)-Hodge substructure of \(H^4(\mathcal{W}, \mathbb{Q}(2))\). In particular, we realise the Hodge group \(\Hg(\IH^1(\mathbb{P}^1, \mathcal{H}^3_v)(2))_{\mathbb{Q}} := \Hom_{\mathrm{HS}}(\mathbb{Q}(0), \IH^1(\mathbb{P}^1, \mathcal{H}^3_v)(2))\) as a subgroup of the Hodge group of \(H^4(\mathcal{W}, \mathbb{Q}(2))\), and for the latter we do know the Hodge conjecture. Since \(\mathcal{H}_v^3\) has no global sections,~\cite[Corollary 2.21 and Proposition 3.3]{brosnan/singularities} identify admissible \(\mathbb{Q}\)-normal functions with values in \(\mathcal{H}_v^3(2)\) with the Hodge classes in its parabolic cohomology (through comparison of both these groups with the absolute Hodge cohomology group \(\IH^1_{\mathrm{AH}} (\mathbb{P}^1, \mathcal{H}^3_v(2))\), see also the exposition in~\cite[Lemmas~51--52]{kerr/exponential} for details):
\[
  \mathrm{NF}^{\mathrm{ad}}(U, \mathcal{H}_v^3(2))_{\mathbb{Q}} \simeq
  \Hom_{\mathrm{HS}} ( \mathbb{Q}(0), \IH^1(\mathbb {P}^1, \mathcal{H}_v^3(2))).
\]
We obtain that our admissible normal function comes from an algebraic 2-cycle on \(\mathcal{W}\).
\end{proof}

\clearpage
\appendix
\section{Landau--Ginzburg models: local systems}\label{appendix:local-systems}

In this appendix the notation \(\bullet_{\mu}^{(k, N)}\) means that \(\bullet\) corresponds to a complete intersection of multidegree \(\mu\) in \(\Gr(k, N)\).

\subsection{Regularised quantum periods}

\begin{align*}
  QP_{(1, 3)}^{(2, 5)} = 1 + 18 t + 1710 t^2 + 246960 t^3 + 43347150 t^4 + 8515775268 t^5 + \ldots \\
  QP_{(1, 1)}^{(2, 5)} = 1 + 18 t^3 + 1710 t^6 + 246960 t^9 + 43347150 t^{12} + 8515775268 t^{15} + \ldots \\
  QP_{(2, 2)}^{(2, 5)} = 1 + 12 t + 684 t^2 + 58800 t^3 + 6129900 t^4 + 714610512 t^5 + 89611475184 t^6 + \ldots \\
  QP_{(1, 2)}^{(2, 5)} = 1 + 12 t^2 + 684 t^4 + 58800 t^6 + 6129900 t^8 + 714610512 t^{10} + 89611475184 t^{12} + \ldots \\
  QP_{(1, 1, 1, 2)}^{(2, 6)} = 1 + 8 t + 288 t^2 + 15200 t^3 + 968800 t^4 + 68923008 t^5 + 5269660704 t^6 + \ldots \\
  QP_{(1, 1, 1, 1)}^{(2, 6)} = 1 + 8 t^2 + 288 t^4 + 15200 t^6 + 968800 t^8 + 68923008 t^{10} + 5269660704 t^{12} + \ldots \\
  QP_{(1, 1, 1, 1, 1, 1)}^{(2, 7)} = 1 + 5 t + 109 t^2 + 3317 t^3 + 121501 t^4 + 4954505 t^5 + 216867925 t^6 + \ldots
\end{align*}

\subsection{Picard--Fuchs operators}

Throughout this subsection \(D\) is the operator \(t \frac{\partial}{\partial t}\).
\begin{align*}
  PF_{(1, 3)}^{(2, 5)} = (27 t (27 t + 11) - 1) D^4 +
  54 t (54 t + 11) D^3 + \\ +
  3 t (1323 t + 148) D^2 +
  3 t (702 t + 49) D +
  18 t (20 t + 1); \\
  PF_{(1, 1)}^{(2, 5)} = (27 t^3 (27 t^3 + 11) - 1) D^4 +
  3 \cdot 54 t^3 (54 t^3 + 11) D^3 + \\ +
  3^2 \cdot 3 t^3 (1323 t^3 + 148) D^2 +
  3^3 \cdot 3 t^3 (702 t^3 + 49) D +
  3^4 \cdot 18 t^3 (20 t^3 + 1); \\
  PF_{(2, 2)}^{(2, 5)} = ((16 t (16 t + 11) - 1) D^4 +
  32 t (32 t + 11) D^3 + \\ +
  4 t (352 t + 67) D^2 +
  4 t (192 t + 23) D +
  12 t (12 t + 1)); \\
  PF_{(1, 2)}^{(2, 5)} = ((16 t^2 (16 t^2 + 11) - 1) D^4 +
  2 \cdot 32 t^2 (32 t^2 + 11) D^3 + \\ +
  2^2 \cdot 4 t^2 (352 t^2 + 67) D^2 +
  2^3 \cdot 4 t^2 (192 t^2 + 23) D +
  2^4 \cdot 12 t^2 (12 t^2 + 1)); \\
  PF_{(1, 1, 1, 2)}^{(2, 6)} = (8 t (54 t + 13) - 1) D^4 +
  16 t (108 t + 13) D^3 + \\ +
  6 t (406 t + 27) D^2 +
  2 t (708 t + 29) D +
  8 t (36 t + 1); \\
  PF_{(1, 1, 1, 1)}^{(2, 6)} = (8 t^2 (54 t^2 + 13) - 1) D^4 +
  2 \cdot 16 t^2 (108 t^2 + 13) D^3 + \\ +
  2^2 \cdot 6 t^2 (406 t^2 + 27) D^2 +
  2^3 \cdot 2 t^2 (708 t^2 + 29) D +
  2^4 \cdot 8 t^2 (36 t^2 + 1); \\
  PF_{(1, 1, 1, 1, 1, 1)}^{(2, 7)} =
  (t - 3)^2 \cdot (t^3 - 289 t^2 - 57 t + 1) D^4 + \\ +
  4 t \cdot (t - 3) \cdot (t^3 - 149 t^2 + 867 t + 85) D^3 + \\ +
  2 t \cdot (3 t^4 - 239 t^3 + 2353 t^2 - 7597 t - 408) D^2 + \\ +
  2 t \cdot (2 t^4 - 87 t^3 + 675 t^2 - 4773 t - 153) D + \\ +
  t (t^4 - 26 t^3 + 12 t^2 - 2166 t - 45).
\end{align*}
Note that the operators \(PF_{(1, 2)}^{(2, 5)}\), \(PF_{(1, 1)}^{(2, 5)}\), \(PF_{(1, 1, 1, 1)}^{(2, 6)}\) are pullbacks of \(PF_{(2, 2)}^{(2, 5)}\), \(PF_{(1, 3)}^{(2, 5)}\), \(PF_{(1, 1, 1, 2)}^{(2, 6)}\) under \(t \mapsto t^i\), correspondingly. Moreover, according to the database~\cite{almkvist/CY-tables} of fourth order differential operators of Calabi--Yau type, the operators \(PF_{(2, 2)}^{(2, 5)}\), \(PF_{(1, 3)}^{(2, 5)}\), \(PF_{(1, 1, 1, 2)}^{(2, 6)}\), \(PF_{(1, 1, 1, 1, 1, 1)}^{(2, 7)}\) have AESZ IDs 24,25,26,27, correspondingly.

\subsection{Ramification data}

None of the local systems is extremal: each has ramification defect equal to \(1\). In all cases there exists a unique singular point \(t_{\max}\) with maximal absolute value \(\vert t_{\max} \vert\) (unique up to multiplication of \(t_{\max}\) by roots of unity if the Fano index is larger than one). Below we provide Jordan normal forms of log-monodromy operators.

\subsubsection{CI of multidegree \((1, 3)\) in \(\Gr(2, 5)\)}
\(t_{\max} = -\frac{11}{54} - \frac{5 \sqrt{5}}{54}\),
\[
  \begin{pmatrix}
    0 & 1 & 0 & 0 \\
    0 & 0 & 1 & 0 \\
    0 & 0 & 0 & 1 \\
    0 & 0 & 0 & 0
  \end{pmatrix}_{t = 0}, \quad
  \begin{pmatrix}
    0 & 0 & 0 & 0 \\
    0 & 0 & 0 & 0 \\
    0 & 0 & 0 & 1 \\
    0 & 0 & 0 & 0
  \end{pmatrix}_{t^2 + \frac{11}{27} t - \frac{1}{729} = 0}, \quad
  \begin{pmatrix}
    2/3 & 0 & 0 & 0 \\
    0 & 2/3 & 0 & 0 \\
    0 & 0 & 1/3 & 0 \\
    0 & 0 & 0 & 1/3
  \end{pmatrix}_{t = \infty}.
\]

\subsubsection{CI of multidegree \((1, 1)\) in \(\Gr(2, 5)\)}
\(t_{\max}^3 = -\frac{11}{54} - \frac{5 \sqrt{5}}{54}\),
\[
  \begin{pmatrix}
    0 & 1 & 0 & 0 \\
    0 & 0 & 1 & 0 \\
    0 & 0 & 0 & 1 \\
    0 & 0 & 0 & 0
  \end{pmatrix}_{t = 0}, \quad
  \begin{pmatrix}
    0 & 0 & 0 & 0 \\
    0 & 0 & 0 & 0 \\
    0 & 0 & 0 & 1 \\
    0 & 0 & 0 & 0
  \end{pmatrix}_{t^6 + \frac{11}{27} t^3 - \frac{1}{729} = 0}, \quad
  \begin{pmatrix}
    0 & 0 & 0 & 0 \\
    0 & 0 & 0 & 0 \\
    0 & 0 & 0 & 0 \\
    0 & 0 & 0 & 0
  \end{pmatrix}_{t = \infty}.
\]

\subsubsection{CI of multidegree \((2, 2)\) in \(\Gr(2, 5)\)}
\(t_{\max} = -\frac{11}{32} - \frac{5 \sqrt{5}}{32}\),
\[
  \begin{pmatrix}
    0 & 1 & 0 & 0 \\
    0 & 0 & 1 & 0 \\
    0 & 0 & 0 & 1 \\
    0 & 0 & 0 & 0
  \end{pmatrix}_{t = 0}, \;
  \begin{pmatrix}
    0 & 0 & 0 & 0 \\
    0 & 0 & 0 & 0 \\
    0 & 0 & 0 & 1 \\
    0 & 0 & 0 & 0
  \end{pmatrix}_{t^2 + \frac{11}{16} t - \frac{1}{256} = 0}, \quad
  \begin{pmatrix}
    1/2 & 1 & 0 & 0 \\
    0 & 1/2 & 0 & 0 \\
    0 & 0 & 1/2 & 1 \\
    0 & 0 & 0 & 1/2
  \end{pmatrix}_{t = \infty}.
\]

\subsubsection{CI of multidegree \((1, 2)\) in \(\Gr(2, 5)\)}
\(t_{\max}^2 = -\frac{11}{32} - \frac{5 \sqrt{5}}{32}\),
\[
  \begin{pmatrix}
    0 & 1 & 0 & 0 \\
    0 & 0 & 1 & 0 \\
    0 & 0 & 0 & 1 \\
    0 & 0 & 0 & 0
  \end{pmatrix}_{t = 0}, \;
  \begin{pmatrix}
    0 & 0 & 0 & 0 \\
    0 & 0 & 0 & 0 \\
    0 & 0 & 0 & 1 \\
    0 & 0 & 0 & 0
  \end{pmatrix}_{t^4 + \frac{11}{16} t^2 - \frac{1}{256} = 0}, \quad
  \begin{pmatrix}
    0 & 1 & 0 & 0 \\
    0 & 0 & 0 & 0 \\
    0 & 0 & 0 & 1 \\
    0 & 0 & 0 & 0
  \end{pmatrix}_{t = \infty}.
\]

\subsubsection{CI of multidegree \((1^{(3)}, 2)\) in \(\Gr(2, 6)\)}
\(t_{\max} = -\frac{1}{4}\),
\begin{gather*}
  \begin{pmatrix}
    0 & 1 & 0 & 0 \\
    0 & 0 & 1 & 0 \\
    0 & 0 & 0 & 1 \\
    0 & 0 & 0 & 0
  \end{pmatrix}_{t = 0}, \;
  \begin{pmatrix}
    0 & 0 & 0 & 0 \\
    0 & 0 & 0 & 0 \\
    0 & 0 & 0 & 1 \\
    0 & 0 & 0 & 0
  \end{pmatrix}_{t = \frac{1}{108}}, \;
  \begin{pmatrix}
    0 & 0 & 0 & 0 \\
    0 & 0 & 0 & 0 \\
    0 & 0 & 0 & 1 \\
    0 & 0 & 0 & 0
  \end{pmatrix}_{t = -\frac{1}{4}}, \;
  \begin{pmatrix}
    2/3 & 0 & 0 & 0 \\
    0 & 1/2 & 0 & 0 \\
    0 & 0 & 1/2 & 0 \\
    0 & 0 & 0 & 1/3
  \end{pmatrix}_{t = \infty}.
\end{gather*}

\subsubsection{CI of multidegree \((1^{(4)})\) in \(\Gr(2, 6)\)}
\(t_{\max}^2 = -\frac{1}{4}\),
\[
  \begin{pmatrix}
    0 & 1 & 0 & 0 \\
    0 & 0 & 1 & 0 \\
    0 & 0 & 0 & 1 \\
    0 & 0 & 0 & 0
  \end{pmatrix}_{t = 0}, \;
  \begin{pmatrix}
    0 & 0 & 0 & 0 \\
    0 & 0 & 0 & 0 \\
    0 & 0 & 0 & 1 \\
    0 & 0 & 0 & 0
  \end{pmatrix}_{t^2 = \frac{1}{108}}, \;
  \begin{pmatrix}
    0 & 0 & 0 & 0 \\
    0 & 0 & 0 & 0 \\
    0 & 0 & 0 & 1 \\
    0 & 0 & 0 & 0
  \end{pmatrix}_{t^2 = -\frac{1}{4}}, \;
  \begin{pmatrix}
    0 & 0 & 0 & 0 \\
    0 & 0 & 0 & 0 \\
    0 & 0 & 2/3 & 0 \\
    0 & 0 & 0 & 1/3
  \end{pmatrix}_{t = \infty}.
\]

\subsubsection{CI of multidegree \((1^{(6)})\) in \(\Gr(2, 7)\)}
\(t_{\max} \approx 289.20\ldots\)
\[
  \begin{pmatrix}
    0 & 1 & 0 & 0 \\
    0 & 0 & 1 & 0 \\
    0 & 0 & 0 & 1 \\
    0 & 0 & 0 & 0
  \end{pmatrix}_{t = 0}, \quad
  \begin{pmatrix}
    0 & 0 & 0 & 0 \\
    0 & 0 & 0 & 0 \\
    0 & 0 & 0 & 1 \\
    0 & 0 & 0 & 0
  \end{pmatrix}_{t^3 - 289 t^2 - 57 t + 1 = 0}, \quad
  \begin{pmatrix}
    0 & 1 & 0 & 0 \\
    0 & 0 & 1 & 0 \\
    0 & 0 & 0 & 1 \\
    0 & 0 & 0 & 0
  \end{pmatrix}_{t = \infty}.
\]

\section{Landau--Ginzburg models: Laurent polynomials}\label{appendix:laurent-polynomials}

\subsection{Laurent polynomials defining LG models}\label{subsection:polynomial-list}

The following Laurent polynomials are either taken from~\cite{przyjalkowski/grassmannians,przyjalkowski/grassmannian-planes}, or are obtained by \emph{Przyjalkowski method} (for example, see~\cite{coates/toric}), and also composed with some mutation (see the computations in~\cite{ovcharenko/supplement}).
\begin{align*}
  LG_{(1, 3)}^{(2, 5)} = (b^{-1} c^{-1} d) \cdot (c + 1) \cdot (c + 1 + b a^{-1}) \cdot (a + b + 1 + d^{-1})^3; \\
  LG_{(1, 1)}^{(2, 5)} = a_{22} a_{11}^{-1} + a_{22} a_{21}^{-1} + a_{31}^{-1} + a_{31} a_{21}^{-1} + a_{22}^{-1} + a_{21} + a_{11}; \\
  LG_{(2, 2)}^{(2, 5)} = (a_{11}^{-4} a_{22}^{-2} a_{31}^{-2} a_{12}^{-1}) \cdot (a_{11} a_{22} + a_{11} a_{31} + a_{31}) \cdot \\ \cdot
  (a_{11} a_{22} + a_{11} a_{31} + a_{11} + 1)^2 \cdot (a_{11}^2 a_{22} a_{31} a_{12} + 1)^2; \\
  LG_{(1, 2)}^{(2, 5)} = (a_{11} a_{31} (a_{12} + 1) (a_{12} + a_{21}) + a_{12} + a_{21} + a_{21} a_{12}^{-1})^2 a_{11}^{-1} + (a_{12} a_{21} a_{31})^{-1}; \\
  LG_{(1, 1, 1, 2)}^{(2, 6)} = (x (w y z + z + 1) \cdot (x (z + 1) + y z + 1) + y z (w x z + 1))^2 \cdot (w x^2 y^2 z^3)^{-1}; \\
  LG_{(1, 1, 1, 1)}^{(2, 6)} = (a_{41} + a_{31} + a_{21} + a_{11}) \cdot (1 + a_{31}^{-1} a_{21}^{-1} a_{11}^{-1} \cdot (a_{41} + a_{31})) + a_{31}^{-1} + a_{41}^{-1}; \\
  LG_{(1^{(6)})}^{(2, 7)} = (a_{51} (a_{21}^2 a_{31} (a_{41} + 1) + a_{21} + 1) + a_{21}) \cdot (a_{21}^4 a_{31} a_{41} a_{51})^{-1} \cdot \\ \cdot
  ((a_{21}^2 a_{31} (a_{41} + 1) + 1) (a_{51} (a_{21}^2 a_{31} (a_{41} + 1) + a_{21} + 1) + a_{21}^2 a_{41} + a_{21}) + a_{21}^2) .
\end{align*}

\subsection{Fine regular star triangulations of the dual polytope giving smooth toric crepant resolutions}

In this subsection columns of the matrix \(v\) represent the ordered vertices of the dual polytope \(P^{\vee}\) except for the first column \(v_0\), which is always chosen to be the unique integral interior point. Similarly, columns of the matrix \(T\) describe the corresponding simplex in the chosen fine regular star triangulation, where entries correspond to columns of the matrix \(v\). Validation of smoothness of the corresponding toric variety can be done by explicit computations (see~\cite{ovcharenko/supplement}). In all of our triangulations the star origin is chosen to be the first column \(v_0\), i.e., the unique interior lattice point.
\begin{gather*}
  v_{(1, 3)}^{(2, 5)} =
  \begin{pmatrix}
    0 & -1 & 0 & 0 & 0 & 0 & 1 & 1 & 1 \\
    0 & -1 & 0 & 0 & 0 & 1 & 0 & 0 & 1 \\
    0 & 0 & -1 & 0 & 1 & 0 & -1 & 0 & 0 \\
    0 & 1 & 0 & -1 & 0 & 0 & 0 & 0 & 0
  \end{pmatrix}; \\
  v_{(2, 2)}^{(2, 5)} =
  \begin{pmatrix}
    0 & -1 & 0 & 0 & 0 & 0 & 0 & 0 & 1 & 1 \\
    0 & 0 & -1 & 0 & 0 & 0 & 1 & 1 & -1 & -1 \\
    0 & 0 & -1 & 0 & 0 & 1 & 0 & 1 & -1 & 0 \\
    0 & 2 & 2 & -1 & 1 & -1 & -1 & -2 & 0 & -1
  \end{pmatrix}; \\
  v_{(1, 1, 1, 2)}^{(2, 6)} =
  \begin{pmatrix}
    0 & -1 & 0 & 0 & 0 & 0 & 0 & 0 & 1 & 1 & 1 & 1 \\
    0 & -1 & -1 & -1 & 0 & 0 & 0 & 1 & 0 & 0 & 1 & 1 \\
    0 & 0 & 0 & 1 & -1 & 0 & 0 & 0 & -1 & 0 & -1 & 0 \\
    0 & 1 & 1 & 0 & 0 & -1 & 1 & -1 & 0 & -1 & 0 & -1
  \end{pmatrix}; \\
  v_{(1, 1, 1, 1, 1, 1)}^{(2, 7)} =
  \begin{pmatrix}
    0 & -1 & -1 & -1 & -1 & 0 & 0 & 0 & 0 & 0 & 0 & 1 & 1 & 1 & 1 \\
    0 & -1 & 0 & 0 & 0 & -1 & 0 & 0 & 0 & 1 & 1 & -1 & 0 & 0 & 0 \\
    0 & 1 & 0 & 1 & 2 & 1 & 1 & 1 & 2 & 0 & 1 & -1 & -2 & -1 & 0 \\
    0 & 0 & 0 & -1 & -1 & 0 & -1 & 0 & -1 & 0 & -1 & 1 & 1 & 0 & 0
  \end{pmatrix}.
\end{gather*}

\[
  T_{(1, 3)}^{(2, 5)} =
  \begin{pmatrix}
    0 & 0 & 0 & 0 & 0 & 0 & 0 & 0 & 0 & 0 & 0 & 0 & 0 & 0 & 0 \\
    1 & 1 & 1 & 1 & 1 & 1 & 1 & 1 & 1 & 1 & 2 & 3 & 3 & 3 & 3 \\
    2 & 2 & 2 & 3 & 3 & 3 & 4 & 4 & 5 & 6 & 3 & 4 & 4 & 5 & 6 \\
    3 & 3 & 5 & 4 & 4 & 6 & 5 & 7 & 6 & 7 & 5 & 5 & 7 & 6 & 7 \\
    5 & 6 & 6 & 5 & 7 & 7 & 8 & 8 & 8 & 8 & 6 & 8 & 8 & 8 & 8
  \end{pmatrix};
\]
\[  
  T_{(2, 2)}^{(2, 5)} =
  \begin{pmatrix}
    0 & 0 & 0 & 0 & 0 & 0 & 0 & 0 & 0 & 0 & 0 & 0 & 0 & 0 & 0 & 0 & 0 & 0 & 0 & 0 \\
    1 & 1 & 1 & 1 & 1 & 1 & 1 & 1 & 2 & 2 & 2 & 2 & 2 & 2 & 3 & 3 & 3 & 4 & 4 & 4 \\
    2 & 2 & 2 & 2 & 3 & 3 & 4 & 4 & 3 & 3 & 3 & 4 & 4 & 4 & 5 & 6 & 7 & 5 & 6 & 7 \\
    3 & 3 & 4 & 4 & 5 & 6 & 5 & 6 & 5 & 6 & 8 & 5 & 6 & 8 & 7 & 7 & 8 & 7 & 7 & 8 \\
    5 & 6 & 5 & 6 & 7 & 7 & 7 & 7 & 9 & 8 & 9 & 9 & 8 & 9 & 9 & 8 & 9 & 9 & 8 & 9
  \end{pmatrix};
\]
\begin{align*}
  T_{(1, 1, 1, 2)}^{(2, 6)} =
  \begin{pmatrix}
    0 & 0 & 0 & 0 & 0 & 0 & 0 & 0 & 0 & 0 & 0 & 0 & 0 & 0 & \cdots \\
    1 & 1 & 1 & 1 & 1 & 1 & 1 & 1 & 2 & 2 & 2 & 3 & 3 & 3 & \cdots \\
    2 & 2 & 2 & 3 & 3 & 3 & 4 & 4 & 3 & 3 & 4 & 4 & 5 & 5 & \cdots \\
    3 & 3 & 4 & 4 & 5 & 6 & 5 & 6 & 4 & 6 & 6 & 5 & 7 & 8 & \cdots \\
    4 & 6 & 6 & 5 & 7 & 7 & 7 & 7 & 8 & 8 & 8 & 8 & 9 & 9 & \cdots
  \end{pmatrix}
  \\
  \begin{pmatrix}
    \cdots & 0 & 0 & 0 & 0 & 0 & 0 & 0 & 0 & 0 & 0 & 0 & 0 & 0 & 0 \\
    \cdots & 3 & 3 & 3 & 3 & 4 & 4 & 4 & 4 & 5 & 6 & 6 & 6 & 7 & 7 \\
    \cdots & 6 & 6 & 6 & 7 & 5 & 6 & 6 & 7 & 7 & 7 & 8 & 9 & 8 & 9 \\
    \cdots & 7 & 8 & 9 & 9 & 7 & 7 & 8 & 8 & 8 & 10 & 9 & 10 & 9 & 10 \\
    \cdots & 11 & 9 & 11 & 11 & 8 & 10 & 10 & 10 & 9 & 11 & 10 & 11 & 10 & 11
  \end{pmatrix};
\end{align*}
\begin{align*}
  T_{(1, 1, 1, 1, 1, 1)}^{(2, 7)} =
  \begin{pmatrix}
    0 & 0 & 0 & 0 & 0 & 0 & 0 & 0 & 0 & 0 & 0 & 0 & 0 & 0 & \cdots \\
    1 & 1 & 1 & 1 & 1 & 1 & 1 & 1 & 2 & 2 & 2 & 2 & 2 & 2 & \cdots \\
    2 & 2 & 2 & 2 & 3 & 3 & 4 & 5 & 3 & 3 & 3 & 4 & 7 & 9 & \cdots \\
    3 & 3 & 4 & 7 & 4 & 5 & 5 & 7 & 4 & 9 & 11 & 7 & 9 & 11 & \cdots \\
    4 & 11 & 7 & 11 & 5 & 11 & 7 & 11 & 9 & 12 & 12 & 9 & 11 & 12 & \cdots
  \end{pmatrix}
  \\
  \begin{pmatrix}
    \cdots & 0 & 0 & 0 & 0 & 0 & 0 & 0 & 0 & 0 & 0 & 0 & 0 & 0 & 0 & \cdots \\
    \cdots & 3 & 3 & 3 & 3 & 3 & 3 & 3 & 3 & 3 & 4 & 4 & 4 & 4 & 4 & \cdots \\
    \cdots & 4 & 4 & 4 & 5 & 6 & 6 & 9 & 10 & 11 & 5 & 5 & 6 & 7 & 8 & \cdots \\
    \cdots & 5 & 6 & 9 & 6 & 10 & 11 & 10 & 12 & 12 & 6 & 7 & 8 & 8 & 9 & \cdots \\
    \cdots & 6 & 10 & 10 & 11 & 13 & 13 & 12 & 13 & 13 & 8 & 8 & 10 & 9 & 10 & \cdots
  \end{pmatrix}
  \\
  \begin{pmatrix}
    \cdots & 0 & 0 & 0 & 0 & 0 & 0 & 0 & 0 & 0 & 0 & 0 & 0 & 0 & 0 \\
    \cdots & 5 & 5 & 6 & 6 & 7 & 7 & 7 & 8 & 8 & 8 & 9 & 9 & 10 & 11 \\
    \cdots & 6 & 7 & 8 & 8 & 8 & 8 & 9 & 9 & 10 & 11 & 10 & 11 & 12 & 12 \\
    \cdots & 8 & 8 & 10 & 11 & 9 & 11 & 11 & 10 & 13 & 13 & 12 & 12 & 13 & 13 \\
    \cdots & 11 & 11 & 13 & 13 & 14 & 14 & 14 & 14 & 14 & 14 & 14 & 14 & 14 & 14
  \end{pmatrix}.
\end{align*}

\subsection{Facet polynomials}

\subsubsection{CI of multidegree \((1, 3)\) in \(\Gr(2, 5)\)}

\begin{multicols}{2}
  \begin{enumerate}
  \item \((X_2 + 1) \cdot (X_1 X_2 + X_1 + 1) \cdot \\ \cdot (X_0 X_1 + X_0 + 1)^3\).
  \item \((X_2 + 1) \cdot (X_0 X_2 + X_0 + 1) \cdot \\ \cdot (X_0 X_1 + X_1 + 1)^3\).
  \item \((X_2 + 1)^2 \cdot (X_0 + X_1 + 1)^3\).
  \item \((X_0 + X_1 + X_2 + 1)^3\).
  \item \((X_2 + 1) \cdot (X_0 + X_1 + 1)^3\).
  \item \((X_2 + 1) \cdot (X_0 + X_1 + 1)^3\).
  \item \((X_0 + 1) \cdot (X_0 X_2 + X_1 + X_2 + 1)^3\).
  \item \((X_1 + 1) \cdot (X_0 + 1)^3 \cdot (X_1 X_2 + X_2 + 1)\).
  \end{enumerate}
\end{multicols}

\subsubsection{CI of multidegree \((1, 1)\) in \(\Gr(2, 5)\)}

\begin{multicols}{2}
  \begin{enumerate}
  \item \(X_0 X_1 + X_0 + X_1 + X_2 + 1\).
  \item \(X_0 + X_1 + X_2 + X_0^{-1} X_1 X_2 + 1\).
  \item \(X_0 + X_1 + X_2 + X_0^{-1} X_1 X_2 + 1\).
  \item \(X_1 X_2 + X_0 + X_1 + X_2 + 1\).
  \item \(X_0 + X_1 + X_2 + 1\).
  \item \(X_0 + X_1 + X_2 + 1\).
  \item \(X_0 + X_1 + X_2 + 1\).
  \item \(X_0 + X_1 + X_2 + 1\).
  \end{enumerate}
\end{multicols}

\clearpage
\subsubsection{CI of multidegree \((2, 2)\) in \(\Gr(2, 5)\)}

\begin{multicols}{2}
  \begin{enumerate}
  \item \((X_0 + X_2 + 1) \cdot \\ \cdot (X_0 + X_1 + X_0^{-1} X_1 X_2 + 1)^2\).
  \item \((X_1 + 1) \cdot (X_2 + 1)^2 \cdot (X_0 X_1 + X_0 + 1)^2\).
  \item \((X_2 + 1)^2 \cdot (X_0 + 1)^2 \cdot (X_0 X_1 + X_1 + 1)\).
  \item \((X_2 + 1)^2 \cdot (X_0 + X_1 + 1)^2\).
  \item \((X_0 + 1) \cdot (X_2 + 1)^2 \cdot (X_1 + 1)^2\).
  \item \((X_0 + X_1 + 1) \cdot (X_2 + 1)^2 \cdot (X_0 + 1)^2\).
  \item \((X_0 + 1) \cdot (X_2 + 1)^2 \cdot (X_0 + X_1 + 1)^2\).
  \item \((X_2 + 1)^2 \cdot (X_0 + X_1 + 1)^2\).
  \item \((X_0 + X_2 + 1) \cdot \\ \cdot (X_0 + X_1 + X_0^{-1} X_1 X_2 + 1)^2\).
  \end{enumerate}
\end{multicols}

\subsubsection{CI of multidegree \((1, 2)\) in \(\Gr(2, 5)\)}

\begin{multicols}{2}
  \begin{enumerate}
  \item \(((X_0 + X_1) (X_1 X_2 + X_1 + X_2) + X_1)^2\).
  \item \(((X_0 + X_2) (X_1 + X_2) X_2^{-1})^2 + X_2\).
  \item \(((X_0 + X_2) (X_1 + X_2) X_2^{-1})^2 + X_2\).
  \item \((X_0 + X_1 + X_2)^2 + X_0\).
  \item \(((X_0 + 1) (X_2 + 1))^2 + X_1\).
  \item \((X_0 + X_2 + 1)^2 + X_1\).
  \item \((X_1 + X_2 + 1)^2 + X_0\).
  \item \((X_1 + X_2 + 1)^2 + X_0\).
  \end{enumerate}
\end{multicols}

\subsubsection{CI of multidegree \((1^{(3)}, 2)\) in \(\Gr(2, 6)\)}

\begin{multicols}{2}
  \begin{enumerate}
  \item \((X_0 + X_1 + 1)^2 \cdot (X_0 X_2 + X_2 + 1)^2\).
  \item \((X_0 + 1)^2 \cdot ((X_1 + 1) \cdot (X_2 + 1) + X_0 X_2)^2\).
  \item \((X_0 X_2 + X_0 + X_1 + X_2 + 1)^2\).
  \item \((X_1 X_2 + X_0 + X_1 + X_2 + 1)^2\).
  \item \((X_0 X_1 + X_0 + X_1 + X_2 + 1)^2\).
  \item \(((1 + X_2 (X_1 + 1)) \cdot (1 + X_0 (X_1 + 1)))^2\).
  \item \((X_0 + X_1 + X_2 + 1)^2\).
  \item \((1 + X_0^{-1} (X_0 + X_1) \cdot(X_0 + X_2))^2\).
  \item \((X_0 + X_1 + X_2 + 1)^2\).
  \item \((X_0 + X_1 + X_2 + 1)^2\).
  \item \(((X_1 + 1) \cdot (X_0 + X_2 + 1) + X_0^{-1} X_1 X_2)^2\).
  \end{enumerate}
\end{multicols}

\subsubsection{CI of multidegree \((1^{(4)})\) in \(\Gr(2, 6)\)}

\begin{multicols}{2}
  \begin{enumerate}
  \item \((X_1 + X_2 + 1) \cdot (X_0 X_1 + X_0 + 1)\).
  \item \((X_1 + X_2 + 1) \cdot (X_0 X_1 + X_0 + 1)\).
  \item \(X_0 + X_2 + 1 + \\ + X_0^{-1} X_1 (X_0 + 1)(X_0 + X_2)\).
  \item \((X_0 X_1 + X_0 X_1 X_2^{-1} + X_0 + X_1 + 1) \cdot \\ \cdot (X_2 + 1)\).
  \item \(X_0 + X_1 + X_2 + 1\).
  \item \(X_0 + X_1 + X_2 + X_0^{-1} X_1 X_2 + 1\).
  \item \(X_0 X_1 + X_0 + X_1 + X_2 + 1\).
  \item \(X_0 X_1 + X_0 + X_1 + X_2 + 1\).
  \item \(X_0 + X_1 + X_2 + X_0^{-1} X_1 X_2 + 1\).
  \item \(X_0 + X_1 + X_2 + 1\).
  \item \(X_0 + X_1 + X_2 + 1\).
  \end{enumerate}
\end{multicols}

\subsubsection{CI of multidegree \((1^{(6)})\) in \(\Gr(2, 7)\)}

\begin{multicols}{2}
  \begin{enumerate}
  \item \((X_1 + X_2 + 1) \cdot (X_0 + X_1 + X_2 + 1)^2\).
  \item \((X_0 + 1) \cdot (X_1 (X_0 + 1) + 1) \cdot \\ \cdot (1 + X_2 (1 + X_1 (X_0 + 1)))\).
  \item \((X_0 + 1) \cdot (X_2 (X_0 + 1) + 1) \cdot \\ \cdot (X_2 (X_0 + 1) + X_1 + 1)\).
  \item \((X_0 + X_1 + 1) \cdot (1 + X_2 (X_0 + X_1 + 1))^2\).
  \item \((X_1 + 1) \cdot (X_0 + X_1 + 1) \cdot \\ \cdot (X_2 (X_0 + X_1 + 1) + 1)\).
  \item \((X_1 + 1) \cdot (X_2 (X_1 + 1) + 1) \cdot \\ \cdot (X_2 (X_1 + 1) + X_0 + 1)\).
  \item \((X_0 + 1) \cdot (X_2 (X_0 + 1) + X_1 + 1)\).
  \item \((X_0 + 1) \cdot (X_1 (X_0 + 1) + X_2 + 1)\).
  \item \((X_0 + X_1 + 1) \cdot (1 + X_0^{-1} X_1 X_2) + X_2\).
  \item \((X_2 + 1) \cdot (X_1 + 1) \cdot (X_0 + 1)\).
  \item \((X_0 + X_1 + 1) \cdot (X_2 (X_1 + 1) + 1) \cdot \\ \cdot (X_2 (X_0 + X_1 + 1) + 1)\).
  \item \((X_1 (X_0 + 1) + 1) \cdot \\ \cdot ((X_0 + 1) \cdot (X_1 + 1) + X_2)\).
  \item \((X_1 (X_0 + 1) + 1) \cdot \\ \cdot (X_0 X_1 (X_0 + 1) + X_0 + X_2 + 1)\).
  \item \((X_1 + 1) \cdot (X_0 + X_2 + 1) + X_0^{-1} X_1 X_2\).
  \end{enumerate}
\end{multicols}

\clearpage
\printbibliography

\end{document}